\newtheorem{theorem}{Theorem}[section]
\newtheorem{conjecture}[theorem]{Conjecture}
\newtheorem{lemma}[theorem]{Lemma}
\begin{document}

\title{The spectral radius of graphs with no 
intersecting odd cycles\thanks{This paper was firstly announced in May, 2021, 
and was later published on Discrete Mathematics 345 (2022) 112907.  
See \url{https://doi.org/10.1016/j.disc.2022.112907}.  E-mail addresses: ytli0921@hnu.edu.cn (Y. Li), 
ypeng1@hnu.edu.cn (Y. Peng, corresponding author).}}

\author{
Yongtao Li, Yuejian Peng$^{\dag}$ \\[2ex]
{\small School of Mathematics, Hunan University} \\
{\small Changsha, Hunan, 410082, P.R. China }  }

\date{January 14, 2022}

\maketitle

\vspace{-0.5cm}

\begin{abstract}
Let $H_{s,t_1,\ldots ,t_k}$ be the graph with   
$s$ triangles and $k$ odd cycles of lengths $t_1,\ldots ,t_k\ge 5$
 intersecting in exactly one common vertex. 
Recently, Hou, Qiu and Liu [Discrete Math. 341 (2018) 126--137],  
and Yuan [J. Graph Theory 89 (1) (2018) 26--39] 
determined independently  
 the maximum number of edges in an $n$-vertex 
 graph that does not contain $H_{s,t_1,\ldots ,t_k}$ as a subgraph. 
In this paper, we  determine the  graphs of order $n$   
that attain the maximum spectral radius 
among all graphs containing no $H_{s,t_1,\ldots ,t_k}$  
for $n$ large enough. 
 \end{abstract}

{{\bf Key words:}   Spectral radius; 
Intersecting odd cycles; 
Extremal graph; 
Stability method. }

{{\bf 2010 Mathematics Subject Classification.}  05C50, 15A18, 05C38.}

\section{Introduction}
In this paper, we consider only simple and undirected graphs. Let $G$ be a simple connected  graph with vertex set $V(G)=\{v_1, \ldots, v_n\}$ and edge set $E(G)=\{e_1, \ldots, e_m\}$.  
Let $N(v)$ or $N_G(v)$ be the set of neighbors of $v$, 
and $d(v)$ or $d_G(v)$ be the degree of a vertex $v$ in $G$.
Let $S$ be a set of vertices. We write $d_S(v)$ 
for the number of neighbors of $v$ in the set $S$, 
that is, $d_S(v)=|N(v)\cap S|$. 
And we denote by $e(S)$ the number of edges contained in $S$. 
Let $K_{s,t}$ be the complete bipartite graph with parts of sizes 
$s$ and $t$. In particular, when $s=1$, the graph $K_{1,t}$ 
is the star graph with $t$ edges. 
Let $M_{t}$ be the matching of size $t$, i.e., the union of 
$t$ disjoint edges.

 The {\em Tur\'an number} of a graph $F$ is the maximum number of edges 
  in an $n$-vertex graph without a subgraph isomorphic to $F$, and 
  it is usually  denoted by $\mathrm{ex}(n, F)$. 
  We say that a graph $G$ is $F$-free if it does not contain 
  an isomorphic copy of $F$ as a subgraph. 
  A graph on $n$ vertices with no subgraph $F$ and with $\mathrm{ex}(n, F)$ edges is called an {\em extremal graph} for $F$ and we denote by $\mathrm{Ex}(n, F)$ the set of all extremal graphs on $n$ vertices for $F$. 
  It is  a cornerstone of extremal graph theory 
  to 
  understand $\mathrm{ex}(n, F)$ and $\mathrm{Ex}(n, F)$ for various graphs $F$; 
  see \cite{FS13, Keevash11,Sim13} for surveys.

  In 1941, Tur\'{a}n \cite{Turan41} posed the  natural question of determining
 $\mathrm{ex}(n,K_{r+1})$ for $r\ge 2$. 
 Let $T_r(n)$ denote the complete $r$-partite graph on $n$ vertices where 
 its part sizes are as equal as possible. 
Tur\'{a}n \cite{Turan41} 
(also see \cite[p. 294]{Bollobas78})  extended a result of Mantel \cite{Man07} 
and  obtained that if $G$ is an $n$-vertex graph containing no $K_{r+1}$, 
then $e(G)\le e(T_r(n))$, equality holds if and only if $G=T_r(n)$.  
There are many extensions and generalizations on Tur\'{a}n's result.  
The problem of determining $\mathrm{ex}(n, F)$ is usually called the 
Tur\'{a}n-type extremal problem. 
 The most celebrated extension always attributes to a result of 
 Erd\H{o}s, Stone and Simonovits \cite{ES46,ES66}, which  states that 
 \begin{equation} \label{eqESS}
  \mathrm{ex}(n,F) = \left( 1- \frac{1}{\chi (F) -1} + o(1) \right) 
  \frac{n^2}{2}, 
  \end{equation}
 where $\chi (F)$ is the vertex-chromatic number of $F$. 
 This provides good asymptotic estimates for the extremal numbers of non-bipartite graphs. 
 However, for bipartite graphs, where $\chi (F)=2$, it only gives the bound 
 $\mathrm{ex}(n,F)=o(n^2)$. 
 Although there have been numerous attempts on finding better bounds 
 of $\mathrm{ex}(n,F)$ for various bipartite graphs $F$, we know very little in this case. 
 The history of such a case began in 1954 with 
 the theorem of K\H{o}vari, S\'{o}s and Tur\'{a}n  \cite{KST54}, which states 
 that if $K_{s,t}$ is the complete bipartite graph with vertex classes of size $s\ge t$, 
 then $\mathrm{ex}(n,K_{s,t})=O(n^{2-1/t})$; see \cite{Furedi96,Furedi96b} 
 for more details.  
In particular, we refer the interested reader to 
  the comprehensive survey by F\"{u}redi and Simonovits \cite{FS13}.

\subsection{History and background}
In this section, we shall review the exact values of $\mathrm{ex}(n,F)$ for some 
special graphs $F$, instead of the asymptotic estimation.  
A graph on $2k+1$ vertices consisting of $k$ triangles which intersect in exactly one
common vertex is called a {\it $k$-fan} (also known as the friendship graph) 
and is denoted by $F_k$.
Since $\chi (F_k)=3$, the theorem of Erd\H{o}s, Stone and Simonovits 
 in (\ref{eqESS}) implies that 
$\mathrm{ex}(n,F_k)= n^2/4 + o(n^2)$. 
In 1995, Erd\H{o}s, F\"{u}redi, 
Gould and Gunderson  \cite{Erdos95} proved  the following  exact result.

\begin{theorem}\label{thmErdos95} \cite{Erdos95}
For every $k \geq 1$, and for every $n\geq 50k^2$, 
\[ \mathrm{ex}(n, F_k)= \left\lfloor \frac {n^2}{4}\right \rfloor+ \left\{
  \begin{array}{ll}
   k^2-k, \quad~~  \mbox{if $k$ is odd,} \\
    k^2-\frac32 k, \quad \mbox{if $k$ is even}.
  \end{array}
\right. \]
\end{theorem}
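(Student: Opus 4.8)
The plan is to prove the theorem by matching lower and upper bounds, with the whole argument organized around one elementary observation: a graph $G$ contains $F_k$ as a subgraph if and only if some vertex $v$ satisfies $\nu\bigl(G[N(v)]\bigr)\ge k$, where $\nu$ denotes the matching number. Indeed, $k$ triangles sharing the apex $v$ and otherwise vertex-disjoint correspond exactly to $k$ independent edges inside $N(v)$. Thus $G$ is $F_k$-free precisely when $\nu\bigl(G[N(v)]\bigr)\le k-1$ for every vertex $v$, and both quantities $k^2-k$ and $k^2-\tfrac32k$ will emerge as the value of a single extremal function.

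For the lower bound I would start from a balanced complete bipartite graph $K_{\lfloor n/2\rfloor,\lceil n/2\rceil}$ on classes $A,B$ (triangle-free, hence $F_k$-free) and embed into $A$ a graph $H$ that is as dense as possible subject to $\Delta(H)\le k-1$ and $\nu(H)\le k-1$, keeping $B$ independent and complete to $A$. The characterization above then gives $F_k$-freeness: for $v\in B$ we have $G[N(v)]=H$, so $\nu=\nu(H)\le k-1$, while for $u\in A$ every edge of $G[N(u)]$ meets $N_H(u)$, so $\nu\bigl(G[N(u)]\bigr)=d_H(u)\le k-1$. The number of added edges is therefore the Chv\'atal--Hanson extremal function $f(k-1,k-1)$, the maximum number of edges of a graph with maximum degree and matching number both at most $k-1$. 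A short computation shows this equals $k^2-k$ for odd $k$ (two disjoint copies of $K_k$ on $2k$ vertices) and $k^2-\tfrac32k$ for even $k$ (a nearly $(k-1)$-regular graph on only $2k-1$ vertices, where $\nu\le k-1$ holds automatically), reproducing both cases of the stated formula.

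For the upper bound I would first clean the graph: whenever a vertex has degree at most $\lfloor n/2\rfloor$ I delete it and induct on $n$ (this is safe because $\lfloor n^2/4\rfloor-\lfloor (n-1)^2/4\rfloor=\lfloor n/2\rfloor$), reducing to $\delta(G)>n/2$. The local engine is the Erd\H{o}s--Gallai bound for matchings: since $\nu\bigl(G[N(v)]\bigr)\le k-1$, for every vertex of large degree
\begin{equation*}
e\bigl(G[N(v)]\bigr)\le (k-1)\,d(v)-\binom{k}{2}.
\end{equation*}
I then fix a maximum cut $(X,Y)$ and write $e(G)=e(X,Y)+e(X)+e(Y)\le |X||Y|+e(X)+e(Y)$. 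The structural heart is to show that the within-class graphs $H_X,H_Y$ obey the coupled constraints $\Delta(H_X)+\nu(H_Y)\le k-1$ and $\Delta(H_Y)+\nu(H_X)\le k-1$: because $n\ge 50k^2$ and $\delta(G)>n/2$, the bipartition is almost complete, so any within-class matching together with any extra within-class neighbour of a single vertex can be assigned a common apex in the opposite class, which would witness a forbidden size-$k$ matching unless these inequalities hold. Solving the resulting optimization, $\max\bigl(e(X)+e(Y)\bigr)=f(k-1,k-1)$, attained when one side is empty, and combining with $e(X,Y)\le|X||Y|\le\lfloor n^2/4\rfloor$ yields the bound; the equality analysis forces balance and recovers the constructions above.

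The hard part, I expect, is extracting the \emph{exact} additive constant rather than the asymptotics, and in particular the clean split between $k^2-k$ and $k^2-\tfrac32k$. Everything up to an $O(k^2)$ error follows softly from Erd\H{o}s--Stone--Simonovits and stability, but pinning down the precise value of $f(k-1,k-1)$ forces the delicate Chv\'atal--Hanson extremal analysis, together with a careful stability-plus-surgery treatment of near-extremal graphs: controlling the few vertices that fail the max-cut half-degree property, absorbing any bipartite imbalance $\lfloor n^2/4\rfloor-|X||Y|$ against the deficit it creates, and ruling out denser non-bipartite configurations on $\Theta(k)$ vertices. This is exactly where the parity of $k$ enters, through whether the optimal dense spot lives on $2k$ or on $2k-1$ vertices.
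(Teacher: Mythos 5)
The first thing to say is that the paper contains no proof of this statement: Theorem \ref{thmErdos95} is quoted from Erd\H{o}s--F\"uredi--Gould--Gunderson \cite{Erdos95}, and the only related ingredients the paper records are the description of the extremal graphs and the value of $f(k-1,k-1)$ (Lemma \ref{Chvatal76} and the Abbott--Hanson--Sauer special case). So your proposal has to stand on its own. Its two correct ingredients are indeed the standard ones: the observation that $G$ is $F_k$-free if and only if $\nu(G[N(v)])\le k-1$ for every vertex $v$, and the lower-bound construction $T_2(n)$ with a densest graph satisfying $\Delta\le k-1$, $\nu\le k-1$ embedded in one side, contributing exactly $f(k-1,k-1)=k^2-k$ ($k$ odd) or $k^2-\tfrac32k$ ($k$ even) extra edges. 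That half is fine and matches what the paper says about the extremal graphs.

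The upper bound, however, has two genuine gaps. First, the ``delete a vertex of degree at most $\lfloor n/2\rfloor$ and induct on $n$'' reduction is not legitimate: the statement you are inducting on carries the hypothesis $n\ge 50k^2$, and deletion can push you below that threshold; worse, the formula is actually \emph{false} for small orders, e.g.\ for even $k$ the graph $K_{2k}$ is $F_k$-free (it has only $2k$ vertices) and has $\binom{2k}{2}=\lfloor (2k)^2/4\rfloor+k^2-k$ edges, strictly exceeding $\lfloor (2k)^2/4\rfloor+k^2-\tfrac32k$. Hence the induction has no valid base case; one must instead analyze directly whatever large-minimum-degree graph the deletion process terminates at, whose order may be far below $50k^2$, and handling that range is precisely the hard content of \cite{Erdos95}, not a formality. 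Second, your coupled constraints $\Delta(H_X)+\nu(H_Y)\le k-1$ and $\Delta(H_Y)+\nu(H_X)\le k-1$ are not justified by what you have available at that point. The apex witnessing such a configuration must be the star center $u\in X$ itself (a vertex of the opposite class cannot absorb a matching lying inside its own class), and the argument needs $u$ to be adjacent to all but $O(k)$ vertices of $Y$; but maximality of the cut together with $\delta(G)>n/2$ only gives $d_Y(u)\ge d(u)/2>n/4$. Upgrading this to ``every vertex sees almost all of the opposite class'' is exactly the stability work that such proofs must do --- in the present paper's spectral setting this occupies Lemmas \ref{maxcut} through \ref{Lempty} (triangle removal, F\"uredi stability, and eliminating the exceptional sets $W$ and $L$) --- and your sketch compresses it into a single clause. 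Until these two steps are supplied, what you have is a correct strategic outline with the right extremal function, not a proof.
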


The extremal graphs of Theorem \ref{thmErdos95} are as follows.
For odd $k$ (where $n\geq 4k-1$), 
the extremal graphs are constructed by taking $T_2(n)$,
the balanced complete bipartite  graph, and embedding two vertex
disjoint copies of $K_k$ in one side.
For even $k$ (where now $n\geq 4k-3$), the extremal graphs 
are constructed by taking $T_2(n)$ and embedding
 a graph with $2k-1$ vertices, $k^2-\frac{3}{2} k$ edges with maximum degree $k-1$ in one side. 
 \medskip

 Let $C_{k,q}$ be the graph consisting of $k$ cycles of length $q$ which intersect 
 exactly in one common vertex. Clearly, when we set $q=3$, 
 then $C_{k,3}$ is just the $k$-fan graph; see Theorem \ref{thmErdos95}. 
 When $q$ is an odd integer, we can see that $\chi (C_{k,q})=3$, 
 the theorem of Erd\H{o}s, Stone and Simonovits 
  also implies that 
 $\mathrm{ex}(n,C_{k,q})=n^2/4 + o(n^2)$. 
 In 2016, Hou, Qiu and Liu \cite{HQL16} determined exactly the extremal number 
 for $C_{k,q}$ with $k\ge 1$ and odd integer $q\ge 5$.

 \begin{theorem}\cite{HQL16} \label{thmhql16}
 For an integer $k\ge 1$ and an odd integer $q\ge 5$, 
 there exists $n_0(k,q)$ such that 
 for all $n \ge n_0(k,q)$, we have 
 \[  \mathrm{ex}(n,C_{k,q}) =  \left\lfloor \frac {n^2}{4}\right \rfloor 
 + (k-1)^2. \]
 Moreover, an extremal graph must be a 
 Tur\'{a}n graph $T_2(n)$ with a $K_{k-1,k-1}$ embedding 
 into one class.  
 \end{theorem}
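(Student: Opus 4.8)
The plan is to prove Theorem \ref{thmhql16}, the extremal number for $C_{k,q}$ with $k\ge 1$ and odd $q\ge 5$, by combining a lower-bound construction with an upper bound obtained via the stability method. For the lower bound I would exhibit the claimed extremal graph: take the balanced complete bipartite graph $T_2(n)$ and embed a copy of $K_{k-1,k-1}$ into one side. This graph has $\lfloor n^2/4\rfloor + (k-1)^2$ edges, so it remains to verify it is $C_{k,q}$-free. The key observation is that any copy of $C_{k,q}$ needs $k$ internally disjoint odd cycles through a common vertex $v$; each odd cycle must use at least one edge lying inside a part. Since the embedded $K_{k-1,k-1}$ lives in one part and contributes all such ``inside'' edges, one checks that the structure can support at most $k-1$ odd cycles sharing $v$, so no $C_{k,q}$ appears. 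This direction is largely a construction-and-verification step.

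For the upper bound, I would argue that any $C_{k,q}$-free graph $G$ on $n$ vertices with $e(G)\ge \lfloor n^2/4\rfloor$ must look almost bipartite. First I would invoke the Erd\H{o}s--Stone--Simonovits asymptotics \eqref{eqESS}, which with $\chi(C_{k,q})=3$ gives $e(G)=n^2/4+o(n^2)$, to set up a stability argument: a $C_{k,q}$-free graph with nearly extremal edge count can be transformed into a complete bipartite graph by deleting and adding $o(n^2)$ edges. More precisely, I would partition $V(G)=A\cup B$ so that the bipartition $(A,B)$ captures almost all edges, and then bound the number of edges inside $A$ and inside $B$. The heart of the matter is a local counting argument: if one side, say $A$, contains a vertex $v$ of high degree into $B$ together with too many edges inside $A$ near $v$, then one can greedily build $k$ odd cycles of length $q$ through $v$, each alternating appropriately between $A$ and $B$ and using one ``inside'' edge of $A$ as the odd-parity corrector. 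Because each such cycle needs only a short path of length $q$ and the bipartite neighborhoods are huge (of size $\approx n/2$), the disjointness requirement between the $k$ cycles can be met greedily once $n$ is large.

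The main obstacle, and the step I expect to require the most care, is this embedding/counting argument that forces $e(A)+e(B)\le (k-1)^2$. The difficulty is twofold. First, one must show that \emph{every} vertex incident to an ``inside'' edge has large degree to the opposite side, so that short odd closed walks through it can always be routed and cleaned up into genuine cycles of the exact length $q$ (controlling the parity with a single inside edge and padding the rest of the walk through the dense bipartite part to reach length exactly $q$). Second, one must handle the disjointness across the $k$ copies: since each cycle is short and the host neighborhoods are linear in $n$, a greedy selection avoiding the $O(qk)$ previously used vertices succeeds, but this needs $n\ge n_0(k,q)$, which is exactly the threshold in the statement. Once the inside edges are confined to a set of at most $(k-1)^2$ edges, a short argument identifies the unique way to pack $(k-1)^2$ edges inside one part without creating a $C_{k,q}$, yielding both the exact count and the characterization of the extremal graph as $T_2(n)$ with $K_{k-1,k-1}$ embedded in one class.
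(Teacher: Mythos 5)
First, a point of order: the paper does not prove this statement at all --- Theorem \ref{thmhql16} is quoted from \cite{HQL16}, and the paper only uses it (via Theorem \ref{thmHY}) as a black box. So there is no internal proof to compare against, and your proposal must be judged on its own merits. Your lower-bound half is fine: in $T_2(n)$ with $K_{k-1,k-1}$ embedded in one part, every odd cycle must use an odd number of edges inside that part, hence must contain a vertex of the side of the $K_{k-1,k-1}$ not containing the common vertex $v$ (or either side, if $v$ is outside the embedded graph); since the $k$ cycles share only $v$, these vertices are distinct, forcing $k\le k-1$, a contradiction. That verification is routine, as you say.

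The genuine gap is in the upper bound, at precisely the step you flag as the heart of the matter. The embedding argument you describe --- a high-degree vertex $v$ plus ``too many edges inside $A$ near $v$'' yields $k$ cycles through $v$ --- only forbids a star $K_{1,k}$ or a matching $M_k$ among the inside edges. Those are exactly the constraints relevant to the fan $F_k$, and by the Chv\'atal--Hanson bound (Lemma \ref{Chvatal76}) they only give $e(A)+e(B)\le f(k-1,k-1)$, which equals $k^2-k$ ($k$ odd) or $k^2-\tfrac32 k$ ($k$ even) and is \emph{strictly larger} than $(k-1)^2$ for every $k\ge 3$. To reach $(k-1)^2$ you must use the full strength of $q\ge 5$: a $q$-cycle through the center $v$ can absorb an inside edge $xy$ that is \emph{not incident to} $v$ (route $v\to B\to x$, take $xy$, route $y\to B\to\cdots\to v$; this is impossible for $q=3$, which is exactly why the fan has a different answer). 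Hence the forbidden inside configuration is any system of $k$ edges that pairwise intersect in at most one common vertex (a star at the center together with a disjoint matching), and one then needs a separate extremal lemma: a graph containing no such system of $k$ edges has at most $(k-1)^2$ edges, with $K_{k-1,k-1}$ as the unique extremal graph. That lemma is the combinatorial core of both the exact value and the uniqueness statement, and it is absent from your plan; your phrasing (``once the inside edges are confined to at most $(k-1)^2$\dots a short argument identifies the unique way to pack them'') has the logic backwards, since both the numerical bound and the characterization come out of this lemma, not before it. It is telling that the present paper, in Lemma \ref{lem336} and Lemma \ref{Lempty}, only ever derives $K_{1,s+k}$- and $M_{s+k}$-freeness and the weaker bound $f(s+k-1,s+k-1)$ --- that suffices for its spectral argument only because it ultimately defers to Theorem \ref{thmHY}. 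A second, smaller gap: you correctly note that every vertex incident to an inside edge must have large degree to the opposite side, but stability cannot rule out $O(1)$ exceptional low-degree vertices; the standard repair is a minimum-degree deletion/induction argument (if $d(v)<\lfloor n/2\rfloor$, delete $v$ and use $\mathrm{ex}(n,C_{k,q})-\mathrm{ex}(n-1,C_{k,q})\ge\lfloor n/2\rfloor$), which needs to be organized as a progressive induction rather than the greedy cleanup you suggest.
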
 
 
 We remark here that when $q$ is even, then $C_{k,q}$ is a bipartite graph 
 where every vertex in one of its parts
has degree at most $2$. For such a sparse   bipartite graph, 
a classical result of F\"{u}redi \cite{Furedi91} or 
Alon, Krivelevich and Sudakov \cite{Alon03} implies that 
 $\mathrm{ex}(n,C_{k,q})= O(n^{3/2})$. 
 Recently, a breakthrough result of Conlon, Lee and Janzer \cite{CL19,CJL20} shows that 
 for even $q\ge 6$ and $k\ge 1$, we have  
$\mathrm{ex}(n,C_{k,q})= O(n^{3/2-\delta})$ for some $\delta =\delta (k,q)>0$. 
  It is a challenging problem to determine the value $\delta (k,q)$. 
In particular, for the special case $k=1$, this problem 
 reduces to determine the extremal number for even cycle. 

 Next, we shall introduce a unified extension of both Theorem \ref{thmErdos95} 
 and Theorem \ref{thmhql16}. 
 Let $s$ be a positive integer and $t_1,\ldots ,t_k\ge 5$ be odd integers.  
We write $H_{s,t_1,\ldots ,t_k}$ for 
 the graph consisting of $s$ triangles and 
 $k$ odd cycles of  lengths $t_1, \ldots ,t_k $
 in which these triangles and cycles intersect in
exactly one common vertex. 
 The graph $H_{s,t_1,\ldots ,t_k}$ is also known as the flower graph 
with $s+k$ petals. 
We remark here that the $k$ odd cycles 
 can have different lengths. 
 Clearly, when $t_1=\cdots =t_k=0$, then $H_{s,0,\ldots ,0}=F_s$, 
 the $s$-fan graph; see Theorem \ref{thmErdos95}.  
  In addition,  when $s=0$ and  $t_1=\cdots =t_k=q$, 
  then $H_{0,q,\ldots ,q}=C_{k,q}$; 
 see Theorem \ref{thmhql16}. 
 
 In 2018, Hou, Qiu and Liu \cite{HQL18} and Yuan \cite{Yuan18} 
 independently determined the extremal number 
 of $H_{s,t_1,\ldots ,t_k}$ for $s\ge 0 $ and $k\ge 1$.  
 Let $\mathcal{F}_{n,s,k}$ be the family of graphs with each member being a 
 Tur\'{a}n graph $T_2(n)$ with
a graph $Q$ embedded in one partite set, where 
 \[  Q = \begin{cases}
 K_{s+k-1,s+k-1}, & \text{if $(s,k)\neq (3,1)$,} \\
 K_{3,3} ~\text{or}~ 3K_3, & \text{if $(s,k)=(3,1)$,}
 \end{cases}  \]
where $3K_3$ is the union of three disjoint triangles.

 \begin{theorem} \cite{HQL18,Yuan18} \label{thmHY}
  For every graph $H_{s,t_1,\ldots ,t_k}$  with $s\ge 0$ and $ k\ge 1$, 
 there exists $n_0$ such that 
 for all $n \ge n_0$, we have 
 \[  \mathrm{ex}(n,H_{s,t_1,\ldots ,t_k}) =  \left\lfloor \frac {n^2}{4}\right \rfloor 
 + (s+k-1)^2. \]
Moreover, the only extremal graphs for $H_{s,t_1,\ldots ,t_k}$ are members of $\mathcal{F}_{n,s,k}$.
 \end{theorem}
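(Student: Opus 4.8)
The plan is to prove matching lower and upper bounds, the upper bound resting on a stability argument. The lower bound amounts to checking that every member of $\mathcal{F}_{n,s,k}$ is $H_{s,t_1,\ldots,t_k}$-free with exactly $\lfloor n^2/4\rfloor+(s+k-1)^2$ edges. The edge count is immediate, since $T_2(n)$ contributes $\lfloor n^2/4\rfloor$ and the embedded $K_{s+k-1,s+k-1}$ contributes $(s+k-1)^2$. For $H$-freeness in the generic case, write the two parts of $T_2(n)$ as $A\cup B$ with $Q=K_{s+k-1,s+k-1}$ embedded in $A$, and let $A_1,A_2$ be the two sides of this $Q$. Deleting the edges of $Q$ leaves a bipartite graph, so every odd cycle --- in particular every petal, triangles included --- must use at least one edge between $A_1$ and $A_2$. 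Since the petals of a flower are internally disjoint, each one consumes a private vertex of $A_1$ (or of $A_2$), so any common centre can support at most $|A_1|=s+k-1$ petals, one short of the $s+k$ that $H_{s,t_1,\ldots,t_k}$ demands. The exceptional configuration $(s,k)=(3,1)$ with $Q=3K_3$ must be checked by a separate counting argument.

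For the upper bound, let $G$ be $H_{s,t_1,\ldots,t_k}$-free on $n$ vertices with $e(G)\ge \lfloor n^2/4\rfloor+(s+k-1)^2$ and $n$ large. Because $\chi(H_{s,t_1,\ldots,t_k})=3$, the Erd\H{o}s--Stone--Simonovits estimate (\ref{eqESS}) gives $e(G)=(1/4+o(1))n^2$, and Simonovits' stability theorem then shows that $G$ becomes bipartite after deleting $o(n^2)$ edges. I would fix a partition $V(G)=V_1\cup V_2$ maximizing the number of crossing edges and use the near-extremality of $e(G)$ to show that the parts are almost balanced and that all but a few vertices send almost all of their edges across the cut. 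The aim of this phase is to reduce to the situation where $G$ resembles $T_2(n)$ together with a bounded amount of extra edges lying inside $V_1$ and $V_2$.

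The heart of the argument is a local analysis bounding the number of edges inside the parts. The key lemma I would establish is that no vertex can have many neighbours inside its own part: if some $v\in V_i$ has many same-side neighbours, then, because the cut is nearly complete, one can route $s+k$ odd cycles of the prescribed lengths $t_1,\ldots,t_k$ (and $s$ triangles) through $v$ meeting only at $v$. The mechanism is that a single intra-part edge at $v$ flips the bipartite parity, after which one zigzags across the nearly complete cut to lengthen each cycle to its required odd value, the size of $n$ guaranteeing enough spare vertices to keep the petals internally disjoint. This forces a uniform bound on same-side degrees, hence $e(V_1)+e(V_2)=O(1)$, and a sharper version of the same cycle-building scheme should upgrade this to the exact inequality $e(V_1)+e(V_2)\le (s+k-1)^2$, with equality only for the configurations listed in $\mathcal{F}_{n,s,k}$.

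I expect the main obstacle to be this final sharpening: passing from a qualitative $O(1)$ bound on the interior edges to the exact constant $(s+k-1)^2$ together with the precise extremal characterization. One has to control odd cycles of several distinct prescribed lengths simultaneously and guarantee that the chosen petals share only the centre, so the combinatorics of how the interior edges may be arranged without completing a flower is delicate; this is exactly where the alternative extremal graph $3K_3$ for $(s,k)=(3,1)$ separates from $K_{3,3}$ and must be singled out.
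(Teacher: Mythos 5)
First, a framing remark: the paper you were given never proves this statement. Theorem \ref{thmHY} is imported from Hou--Qiu--Liu \cite{HQL18} and Yuan \cite{Yuan18} and used as a black box, so the only meaningful comparison is with those works, which do follow the stability strategy you outline. Your lower-bound verification is correct: every petal is an odd cycle and hence must use an edge of the embedded graph $Q$, and since the petals meet only at the centre, each petal pins down a private vertex on one fixed side of $K_{s+k-1,s+k-1}$, of which there are only $s+k-1$. Your opening for the upper bound (Erd\H{o}s--Stone--Simonovits plus Simonovits stability, then a max-cut partition) is also the standard and correct first phase.

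There are, however, two genuine gaps in your upper bound. The first is logical: from ``no vertex has many same-side neighbours'' you conclude $e(V_1)+e(V_2)=O(1)$, but bounded maximum degree inside the parts only gives $O(n)$ interior edges (a perfect matching inside $V_1$ has maximum degree $1$). You also need the interior graphs to have bounded matching number, i.e.\ to be $M_{s+k}$-free, which requires a separate cycle-building argument with the centre placed on the \emph{opposite} side of the cut; this is exactly the pair of conditions ($K_{1,s+k}$-free and $M_{s+k}$-free) that the present paper establishes in its Lemmas \ref{lem336} and \ref{Lempty} for the spectral problem. The second gap is the decisive one: the step you defer, upgrading $O(1)$ to the exact constant $(s+k-1)^2$ with the characterization of $\mathcal{F}_{n,s,k}$, is not a routine sharpening of the same scheme, because degree and matching bounds alone cannot yield it. By Chv\'atal--Hanson (Lemma \ref{Chvatal76}), graphs with $\Delta\le s+k-1$ and matching number at most $s+k-1$ can have $f(s+k-1,s+k-1)$ edges, which exceeds $(s+k-1)^2$ whenever $s+k\ge 3$; indeed that larger value is the true answer for the friendship graph $F_{s+k}$ (Theorem \ref{thmErdos95}). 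The missing idea is a mixed constraint peculiar to flowers with long odd petals: a long odd cycle, unlike a triangle, can reach an interior edge \emph{not incident} to the centre by zigzagging through the nearly complete cut, so a vertex $u$ with only $s$ interior neighbours together with a matching of size $k$ in the interior graph avoiding $N[u]$ already produces $H_{s,t_1,\ldots,t_k}$. It is this degree-plus-disjoint-matching condition, combined with an analysis of interior edges split across the two sides and the exceptional $3K_3$ configuration for $(s,k)=(3,1)$, that forces the interior graph to be essentially $K_{s+k-1,s+k-1}$ on one side. Without formulating and exploiting it, your scheme stalls at the Chv\'atal--Hanson bound and cannot reach either the stated constant or the extremal characterization.
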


\subsection{Spectral extremal problem}

Let $G$ be a simple graph on $n$ vertices. 
The \emph{adjacency matrix} of $G$ is defined as 
$A(G)=[a_{ij}]_{n \times n}$ where $a_{ij}=1$ if two vertices $v_i$ and $v_j$ are adjacent in $G$, and $a_{ij}=0$ otherwise.   
We say that $G$ has eigenvalues $\lambda_1 ,\ldots ,\lambda_n$ if these values are eigenvalues of 
the adjacency matrix $A(G)$. 
Let $\lambda (G)$ be the maximum  value in absolute 
 among the eigenvalues of $G$, which is 
 known as the {\it spectral radius} of graph $G$, 
 that is, 
 \[  \lambda (G) = \max \{|\lambda | : \text{$\lambda$ 
 is an eigenvalue of $G$}\}. \]
By the Perron--Frobenius Theorem \cite[p. 534]{Horn13}, 
the spectral radius of a graph $G$ is actually 
the largest eigenvalue of $G$ 
since the adjacency matrix $A(G)$ is nonnegative.  
We usually write $\lambda_1(G)$ for the spectral radius of $G$. 
The spectral radius of a graph sometimes can give some information  
about the structure of graphs. 
For example, it is well-known \cite[p. 34]{Bapat14} that the average degree of $G$ is at most $\lambda (G)$, which is at most the maximum degree of $G$. 

In this paper we consider spectral analogues 
of Tur\'{a}n-type problems for graphs. 
That is, determining $\mathrm{ex}_{sp}(n,F)= \max \{\lambda (G) : 
|G|=n, F\nsubseteq G \}$.  It is well-known that 
\begin{equation}
 \mathrm{ex}(n,F) \le \frac{n}{2} \mathrm{ex}_{sp}(n,F) 
 \end{equation}
because of the  fundamental inequality $\frac{2m}{n} \le \lambda (G)$.  
For most graphs, this study is again fairly complete 
due in large part to a longstanding work of Nikiforov \cite{NikifSurvey}. 
For example, he extended the classical theorem of Tur\'{a}n, 
by determining the maximum spectral radius of 
any $K_{r+1}$-free graph $G$ on $n$ vertices.

The following problem regarding the adjacency  spectral radius
was proposed in \cite{NikiforovTuran}:
What is the maximum spectral radius of a graph $G$ on $n$
vertices without a subgraph isomorphic to a given graph $F$?
Wilf \cite{Wilf86} and Nikiforov \cite{NikiforovTuran} 
obtained spectral strengthening of Tur\'an's theorem 
when the forbidden  substructure is the complete graph. 
Soon after, 
Nikiforov \cite{Nikiforov07} showed that if $G$ is a $K_{r+1}$-free graph on $n$ vertices, 
then $\lambda (G)\le \lambda (T_r(n))$, 
equality holds if and only if $G=T_r(n)$. 
Moreover, 
Nikiforov \cite{Nikiforov07} (when $n$ is odd), 
and Zhai and Wang \cite{ZW12} (when $n$ is even) 
determined the maximum spectral radius 
of $K_{2,2}$-free graphs. 
Furthermore, Nikiforov \cite{NikiforovKST}, Babai and Guiduli \cite{BG09} 
independently  obtained  the spectral generalization of the 
theorem of K\H{o}vari, S\'os and Tur\'an  when the forbidden graph is the complete bipartite graph $K_{s,t}$. 
Finally, Nikiforov \cite{NikiforovLAA10} characterized 
the spectral radius of graphs without paths and cycles of specified length. 
In addition, Fiedler and Nikiforov  \cite{FiedlerNikif} obtained tight sufficient conditions for
graphs to be Hamiltonian or traceable. 
For many other spectral analogues of results in extremal graph theory 
we refer the reader to the survey  \cite{NikifSurvey}. 
It is worth mentioning that a corresponding spectral extension \cite{Niki09} of 
the theorem of Erd\H{o}s, Stone and Simonovits  states that 
\[  \mathrm{ex}_{sp}(n,F) = \left( 1- \frac{1}{\chi (F)-1} +o(1) \right)n. \]
From this result, we know that 
$\mathrm{ex}_{sp}(n,F_k) = n/2 +o(n)$ where $F_k$ is the $k$-fan graph. 
Recently,  Cioab\u{a}, Feng,  Tait and  Zhang \cite{CFTZ20}   generalized this bound by 
improving the error term $o(n)$ to $O(1)$, 
and obtained a spectral counterpart of Theorem \ref{thmErdos95}. More precisely, 
they proved the following theorem.

 \begin{theorem} \cite{CFTZ20}   \label{thmCFTZ20}
Let $G$ be a graph of order $n$ that does not contain a copy of $F_k$ where $k \geq 2$.  
For sufficiently large $n$, if $G$ has the maximal spectral radius, then
$$G \in \mathrm{Ex}(n, F_k).$$
\end{theorem}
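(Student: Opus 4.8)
The plan is to let $G^*$ denote an $F_k$-free graph on $n$ vertices attaining the maximum spectral radius, write $\lambda = \lambda(G^*)$, and take $\mathbf{x}$ to be the Perron eigenvector normalized so that its largest entry is $x_z = \max_u x_u = 1$. I would first record the governing equations $\lambda x_u = \sum_{v \sim u} x_v$ and their iterate $\lambda^2 x_u = d(u)\,x_u + \sum_{v \sim u}\sum_{w \sim v,\, w\ne u} x_w$, since all later estimates flow from these. Because every graph in $\mathrm{Ex}(n, F_k)$ (Theorem~\ref{thmErdos95}) contains the balanced complete bipartite graph $T_2(n)$ as a spanning subgraph together with extra edges embedded in one part, maximality forces $\lambda \ge \lambda(T_2(n)) = \sqrt{\lfloor n^2/4\rfloor} \ge \tfrac{n-1}{2}$, which is the quantitative lower bound that drives the argument.

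The decisive structural input is that $F_k$-freeness is purely local: $G^*$ is $F_k$-free if and only if for every vertex $v$ the induced subgraph $G^*[N(v)]$ has matching number at most $k-1$. By the Erd\H{o}s--Gallai theorem this gives $e(G^*[N(v)]) \le (k-1)\,d(v)$ up to an additive constant, which bounds the number of triangles through any vertex and hence controls the length-two walk sums in the iterated eigen-equation. I would combine this with the lower bound on $\lambda$ to obtain a stability conclusion: $G^*$ has $\tfrac{n^2}{4} - O(n)$ edges, so by a stability version of Theorem~\ref{thmErdos95} it becomes bipartite after deleting $o(n^2)$ edges. Pairing this with the eigenvector, I would fix an almost-balanced partition $V = A \cup B$ tracking that near-bipartition, show that the minimum degree is at least $\tfrac{n}{2} - O(1)$ (a vertex of too-small degree or too-small eigenvector entry could be deleted and a better-placed vertex substituted, raising $\lambda$ and contradicting maximality), and deduce that the bipartite part between $A$ and $B$ is nearly complete with all eigenvector entries close to the two values attained on the sides of $T_2(n)$.

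From here the task is to upgrade "approximately" to "exactly." Using $\lambda x_u = \sum_{v \sim u} x_v$ for vertices on both sides together with the bound $e(G^*[N(v)]) \le (k-1)d(v)$, I would show that every cross pair in $A\times B$ is actually an edge, so the backbone is precisely $K_{|A|,|B|}$, and then balance $|A|$ and $|B|$ by comparing spectral radii. Finally, all remaining edges must lie inside a single part and must realize the Erd\H{o}s--F\"{u}redi--Gould--Gunderson configuration, namely a subgraph with matching number at most $k-1$ (hence $k^2-k$ edges for odd $k$, or $k^2-\tfrac32 k$ edges with maximum degree $k-1$ for even $k$), since any competing arrangement with the same edge count yields a strictly smaller Rayleigh quotient under a local eigenvector exchange. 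This last step---ruling out all near-extremal competitors by fine comparisons---is where I expect the main obstacle to lie, as it requires sharp estimates on the deviations $1 - x_u$ on the heavier side and a precise accounting of how each additional edge shifts the spectral radius.
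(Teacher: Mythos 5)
Up through the eigenvector estimates, your outline coincides with the proof scheme this paper uses for its more general Theorem~\ref{thmmain} (following \cite{CFTZ20}): the Rayleigh lower bound on $\lambda$, the local reformulation of $F_k$-freeness as a matching bound on every neighborhood, the resulting triangle count and $e(G)\ge \frac{n^2}{4}-O(n)$, stability to obtain a near-balanced near-bipartition (you invoke a stability version of Theorem~\ref{thmErdos95} where the paper uses triangle removal plus F\"uredi's result, Lemmas~\ref{triangleremoval2} and \ref{furedi20153}; this difference is immaterial), the re-wiring argument that eliminates low-degree vertices, and entries $\mathbf{x}_u\ge 1-O(k^2/n)$, i.e.\ Lemmas~\ref{lemma31}--\ref{second lower bound evector2}. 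The gaps are in your endgame. The first is quantitative: exact balance $\bigl||S|-|T|\bigr|\le 1$ cannot be extracted from your lower bound $\lambda\ge\lambda(T_2(n))=\sqrt{\lfloor n^2/4\rfloor}$. The paper's balancing (Lemmas~\ref{lemma311}, \ref{lemma312}, \ref{cut balanced}) relies on the stronger bound (\ref{first lower bound1}), obtained by testing the all-ones vector on an \emph{edge-extremal} graph rather than on $T_2(n)$: its surplus term $\frac{2(s+k-1)^2}{n}$ exactly cancels the estimate $e(G_1)-e(G_2)\le (s+k-1)^2$ of Lemma~\ref{lemma311} (a lemma you have no analogue of, proved by deleting $|T|-|S|$ vertices of $T'$ and applying the edge theorem on $2|S|$ vertices), leaving an $O(k^4/n^2)$ error that is beaten by the $\ge 1/n$ deficit of $\sqrt{|S||T|}$ once $\bigl||S|-|T|\bigr|\ge 2$. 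With your bound the surplus survives, the error is of order $k^2/n$, and since the deficit is only about $d^2/n$ when $\bigl||S|-|T|\bigr|=2d$, you get nothing better than $\bigl||S|-|T|\bigr|=O(k)$.

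The second gap is the decisive one: your exactness mechanism would fail. Proving directly that every cross pair is an edge is circular, because whether adding a missing cross edge creates an $F_k$ depends on the interior edges of $G[S]\cup G[T]$, whose structure you determine only afterwards; so the move ``add the edge, $\lambda$ increases'' is not available at that stage. Your fallback --- ruling out ``competing arrangements with the same edge count'' by a local eigenvector exchange --- is both unnecessary and unsound: unnecessary because any $F_k$-free graph with exactly $\mathrm{ex}(n,F_k)$ edges lies in $\mathrm{Ex}(n,F_k)$ by definition, and unsound because the entries are only $1-O(k^2/n)$ rather than exactly $1$, so an equal-count exchange shifts the Rayleigh quotient by $O(k^2/n)$ with uncontrolled sign. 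What is missing is the paper's closing comparison: assume $e(G)\le \mathrm{ex}(n,F_k)-1$, choose an edge-extremal graph $H$ whose complete bipartite part sits on the balanced cut $(S,T)$ of $G$, and set $E_+=E(H)\setminus E(G)$, $E_-=E(G)\setminus E(H)$; then $|E_+|\ge |E_-|+1$ while $|E_\pm|=O(k^2)$, so Lemma~\ref{second lower bound evector2} gives $\lambda(H)-\lambda(G)\ge \frac{2}{\mathbf{x}^T\mathbf{x}}\bigl(1-O(k^4/n)\bigr)>0$, contradicting spectral maximality. This forces $e(G)=\mathrm{ex}(n,F_k)$, and membership in $\mathrm{Ex}(n,F_k)$ then follows from Theorem~\ref{thmErdos95} (resp.\ Theorem~\ref{thmHY}) as a black box, with no structural pinning of the extremal configuration required. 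Without this comparison step, or a full combinatorial classification of near-extremal $F_k$-free graphs that would essentially reprove the edge result, your outline cannot be completed.
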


Recall that $H_{s,t_1,\ldots ,t_k}$ is the graph consisting of $s$ triangles and 
 $k$ odd cycles of lengths $t_1,\ldots ,t_k$ which intersect in
exactly one common vertex. 
In this paper, we shall prove the following theorem. 

 \begin{theorem}[Main result] \label{thmmain}
Let $G$ be a graph of order $n$ that does not contain a copy of $H_{s,t_1,\ldots ,t_k}$, where $s\ge 0$ and $k \geq 1$.  
For sufficiently large $n$, if $G$ has the maximal spectral radius, then
$$G \in \mathrm{Ex}(n, H_{s,t_1,\ldots ,t_k}).$$
\end{theorem}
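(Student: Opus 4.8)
The plan is to adapt the now-standard spectral stability framework (as in Cioab\u{a}, Feng, Tait and Zhang's proof of Theorem~\ref{thmCFTZ20}) to the flower graph $H_{s,t_1,\ldots,t_k}$, leaning on the Tur\'an-type result of Theorem~\ref{thmHY} as the combinatorial input. Let $G$ be an extremal spectral graph and let $\mathbf{x}$ be the Perron eigenvector of $A(G)$ normalized so that its maximum entry equals $1$, attained at a vertex $u$. The first step is a lower bound: since the conjectured extremal graphs in $\mathcal{F}_{n,s,k}$ are $T_2(n)$ with a small clique-like graph $Q$ planted in one side, a direct computation (or a quotient-matrix estimate on the equitable-ish partition) gives $\lambda_1(G)\ge \lambda_1(T_2(n)) \ge n/2$, and more precisely $\lambda_1(G) \ge \tfrac{n}{2} + c$ for a constant $c=c(s,k)$ reflecting the $(s+k-1)^2$ extra edges. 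This pins the spectral radius to within an additive constant of $n/2$ and will be the quantity every later estimate is compared against.

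Next I would run the eigenvalue-equation localization that forces $G$ to be structurally close to complete bipartite. From $\lambda_1^2 x_u = \sum_{v} (\text{walks of length }2) $ one bounds $\lambda_1^2 \le$ (essentially) the number of vertices at distance $\le 2$ from $u$ weighted by eigenvector entries, and combining with $\lambda_1\ge n/2$ shows that vertices carrying large eigenvector weight have degree $(1+o(1))\tfrac{n}{2}$ and that $G$ has at least $(\tfrac14-o(1))n^2$ edges. Invoking the Erd\H{o}s--Stone--Simonovits-type stability built into Theorem~\ref{thmHY}, or a direct stability lemma for $H_{s,t_1,\ldots,t_k}$, I conclude that $G$ differs from $T_2(n)$ in only $o(n^2)$ edges: there is a partition $V(G)=A\cup B$ such that almost all edges go between $A$ and $B$, and $|A|,|B|=(\tfrac12+o(1))n$. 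The eigenvector then forces the partition to be \emph{robust}: every vertex of positive weight has $(1-o(1))\tfrac n2$ neighbors across the partition and $o(n)$ inside its own part, because an imbalance of cross-edges would let one increase $\mathbf{x}^\top A(G)\mathbf{x}$, contradicting maximality.

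The final and most delicate step is to upgrade this approximate bipartite structure to the \emph{exact} extremal graph, and this is where I expect the real obstacle to lie. Having shown $G$ is $T_2(n)$ plus a bounded graph embedded in one part, I must determine precisely which planted graph $Q$ maximizes $\lambda_1$ subject to remaining $H_{s,t_1,\ldots,t_k}$-free. The key local move is an edge-switching argument: if a vertex $w$ has an ``abnormal'' neighborhood (a neighbor in the wrong part, or a missing cross-edge), I delete its edges and join it completely to the opposite part, showing $\mathbf{x}^\top A \mathbf{x}$ strictly increases while no copy of $H_{s,t_1,\ldots,t_k}$ is created --- the latter requiring the odd-cycle structure, since any planted subgraph together with two long paths through the bipartite part would close up into the required $s$ triangles and $k$ odd cycles sharing the apex $u$. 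After these cleaning steps, $G$ is exactly $T_2(n)$ with some $H_{s,t_1,\ldots,t_k}$-free graph $Q$ on one side, and since within this family maximizing the spectral radius is equivalent to maximizing $e(Q)$ (the dominant first-order eigenvector contribution is proportional to the planted edge count), Theorem~\ref{thmHY} identifies $Q$ and hence forces $G\in\mathrm{Ex}(n,H_{s,t_1,\ldots,t_k})$. The hard part is making the odd-cycle-avoidance bookkeeping in the switching step uniform over the possibly distinct cycle lengths $t_1,\ldots,t_k$ while simultaneously controlling the $o(n)$ error vertices so that none of them sabotages the strict eigenvalue increase; the special case $(s,k)=(3,1)$, where the extremal $Q$ is not unique, will need separate handling to confirm $3K_3$ and $K_{3,3}$ give the same spectral radius.
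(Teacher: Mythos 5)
Your proposal follows the same overall architecture as the paper (the Cioab\u{a}--Feng--Tait--Zhang stability method: spectral lower bound, approximate bipartition, cleanup of exceptional vertices, final comparison with an extremal graph), but it omits the one observation that makes this method work for flowers, and the paper explicitly identifies it as the key point of the extension: since $G$ is $H_{s,t_1,\ldots,t_k}$-free, the neighborhood of \emph{every} vertex contains no path on $(s+k)c$ vertices (where $c$ is the longest cycle length), so by Erd\H{o}s--Gallai (Lemma~\ref{lempath}) each neighborhood spans at most $O(n)$ edges and $G$ has only $O(n^2)$ triangles. This is what converts $\lambda_1>n/2$ into $e(G)\ge n^2/4-O(n)$ (via Lemma~\ref{lemlb}), and it is also what makes the triangle removal lemma applicable ($O(n^2)\le \delta n^3$ triangles), which is how the paper reaches the near-bipartition through F\"uredi's stability (Lemmas~\ref{triangleremoval2} and~\ref{furedi20153}). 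Your walk-counting step, as written (``the number of vertices at distance $\le 2$ from $u$ weighted by eigenvector entries''), cannot give $e(G)\ge(\tfrac14-o(1))n^2$: walks must be counted with multiplicity, $\lambda_1^2\le\sum_{v\sim u}d(v)\le e(G)+2e(G[N(u)])$, and without the neighborhood-sparsity observation the term $e(G[N(u)])$ is uncontrolled. Likewise, there is no ``stability built into Theorem~\ref{thmHY}''; that theorem is an exact Tur\'an result with no stability content, so you would need to invoke the classical Erd\H{o}s--Simonovits stability theorem explicitly (which is legitimate, and would be a genuine shortcut past the removal-lemma route) or prove stability as the paper does.

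The cleanup step as you describe it would also fail. You propose to delete the edges at an abnormal vertex and ``join it completely to the opposite part,'' but the opposite part is not an independent set (it can carry up to $f(s+k-1,s+k-1)$ edges), so this operation can create triangles and odd cycles through the moved vertex, and checking that no copy of $H_{s,t_1,\ldots,t_k}$ arises is exactly the crux. The paper instead first proves that $G[S]$ and $G[T]$ are $K_{1,s+k}$-free and $M_{s+k}$-free (via Chv\'atal--Hanson, Lemma~\ref{Chvatal76}), extracts a large independent set $I_T\subseteq T$ (Lemma~\ref{lem336}), and rewires low-degree vertices \emph{into $I_T$ only} (Lemma~\ref{Lempty}); the independence of $I_T$ is the mechanism that keeps the rewired graph $H_{s,t_1,\ldots,t_k}$-free. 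Two further ingredients you assert rather than prove carry real weight: the exact balancedness $\bigl||S|-|T|\bigr|\le 1$, which in the paper requires the eigenvector bound $\mathbf{x}_u\ge 1-O((s+k)^2/n)$ for every vertex (Lemma~\ref{second lower bound evector2}) and a delicate comparison of $\tfrac{2}{n}\lfloor n^2/4\rfloor$ with $\sqrt{|S||T|}$ (Lemmas~\ref{lemma311}--\ref{cut balanced}); and the endgame, where one does not need to determine which planted graph $Q$ maximizes $\lambda_1$ --- the paper simply compares $\lambda(G)$ with $\lambda(H)$ for a single extremal graph $H$ aligned with the partition and shows $e(G)\le \mathrm{ex}(n,H_{s,t_1,\ldots,t_k})-1$ would force $\lambda(H)>\lambda(G)$. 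In particular, your concern about the case $(s,k)=(3,1)$ ($3K_3$ versus $K_{3,3}$) is moot: the theorem only claims $G\in\mathrm{Ex}(n,H_{s,t_1,\ldots,t_k})$, i.e.\ that $e(G)$ is maximum among $H_{s,t_1,\ldots,t_k}$-free graphs, not which extremal graph $G$ equals.
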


It is interesting that 
the spectral extremal example sometimes differs from 
the usual extremal example. 
For instance, Nikiforov \cite{Nikiforov07},  
and Zhai and Wang  \cite{ZW12}  proved that 
the maximum spectral radius of a $C_4$-free graph 
on $n$ vertices is uniquely achieved by the friendship 
graph. This is very different from 
the usual extremal problem 
for the maximum number of edges in 
a $C_4$-free graphs, 
since F\"{u}redi \cite{Furedi96c} showed that 
for $n$ large enough with the form 
$n=q^2+q+1$, the extremal number is 
attained by the polarity graph of a projective plane. 
From Theorem \ref{thmCFTZ20} and Theorem \ref{thmmain}, 
we know that  graphs attaining the maximum spectral 
radius among all $F_k$-free ($H_{s,t_1,\ldots ,t_k}$-free) graphs 
also 
contain the maximum number of edges 
among all $F_k$-free ($H_{s,t_1,\ldots ,t_k}$-free) graphs.

Our theorem  is a spectral result of 
the Tur\'{a}n extremal problem for $H_{s,t_1,\ldots ,t_k}$, it 
  can be viewed as 
 an extension of Theorem \ref{thmCFTZ20}, as well as 
a spectral analogue of Theorem \ref{thmHY}.  
Our treatment strategy of the proof is mainly 
based on the stability method. 
To some extent, this paper could be regarded 
as a continuation and development of \cite{CFTZ20}. 
The heart of the proof and all key ideas lie in the proof of 
stability. 
We know that if we forbid the substructure $F_k$, 
then the neighborhood of each vertex does not contain 
a matching of $k$ edges. 
While we forbid the intersecting odd-length cycles, 
the neighborhood of each vertex does not contain a 
long path, which can be viewed as 
a key observation in our extension. 
In addition, the embedding method 
of $H_{s,t_1,\ldots ,t_k}$ 
is slightly different from that of $F_k$, 
we need to prove the existence of a larger bipartite subgraph. 
We remark here that the spectral stability method 
is also used in a recent paper 
to deal with the extremal problem of odd-wheel graph \cite{CDT21}.

\section{Some Lemmas} 

In this section, 
we state some lemmas which are needed in our proof.

\begin{lemma} \cite{EG59}   \label{lempath}
Let $P_t$ denote the path on $t$ vertices. 
If $G$ is a $P_t$-free graph on $n$ vertices, 
then $e(G)\le \frac{(t-2)n}{2}$, equality holds 
if and only if $G$ is the disjoint union of copies of $K_{t-1}$. 
\end{lemma}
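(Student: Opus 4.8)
The plan is to prove the bound by induction on the number of vertices $n$, splitting on the minimum degree $\delta(G)$, and to extract the equality case from the two branches of the induction. The single structural input I would isolate first is the classical longest-path estimate: \emph{every connected graph $H$ with minimum degree $\delta$ contains a path on at least $\min\{|H|,\,2\delta+1\}$ vertices.} I would prove this by taking a longest path $P=v_0v_1\cdots v_\ell$ and noting that maximality forces all neighbours of $v_0$ and of $v_\ell$ to lie on $P$. If $\ell\ge 2\delta$ we are done; otherwise $d(v_0)+d(v_\ell)\ge 2\delta>\ell$, and a pigeonhole argument on the neighbour indices along $P$ produces an index $i$ with $v_0v_{i+1}\in E(H)$ and $v_iv_\ell\in E(H)$, hence a cycle through all $\ell+1$ vertices of $P$. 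Connectivity together with $\ell+1<|H|$ would then let me attach a vertex off this cycle and reroute it into a strictly longer path, contradicting maximality; so either $P$ is Hamiltonian or $\ell\ge 2\delta$.

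With this lemma in hand I would run the induction on $n$. The base case $n\le t-1$ is immediate, since then $e(G)\le\binom{n}{2}=\frac{n(n-1)}{2}\le\frac{(t-2)n}{2}$, with equality exactly when $n=t-1$ and $G=K_{t-1}$. For the inductive step with $n\ge t$ I distinguish two cases according to $\delta(G)$. If $\delta(G)\le\frac{t-2}{2}$, I pick a vertex $v$ of minimum degree, apply the inductive hypothesis to the $P_t$-free graph $G-v$ on $n-1$ vertices, and add back the at most $\frac{t-2}{2}$ edges at $v$:
\[
 e(G)\le \frac{(t-2)(n-1)}{2}+\frac{t-2}{2}=\frac{(t-2)n}{2}.
\]
If instead $\delta(G)\ge\frac{t-1}{2}$, I argue componentwise: inside any component $C$ the degrees are unchanged, so $\delta(C)\ge\frac{t-1}{2}$ and $2\delta(C)+1\ge t$; were some $|C|\ge t$, the path lemma would force a $P_t$ inside $C$, contradicting $P_t$-freeness. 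Hence every component has at most $t-1$ vertices, and summing $e(C)\le\binom{|C|}{2}\le\frac{(t-2)|C|}{2}$ over all components again yields $e(G)\le\frac{(t-2)n}{2}$.

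For the equality characterisation I would trace which branch can be tight. In the large-minimum-degree branch, equality forces $e(C)=\binom{|C|}{2}$ with $|C|=t-1$ for every component, i.e.\ $G$ is a disjoint union of copies of $K_{t-1}$, which is exactly the claimed extremal family (and in particular forces $(t-1)\mid n$). It then remains to rule out equality in the removal branch: if $e(G)=\frac{(t-2)n}{2}$ there, then $d(v)=\frac{t-2}{2}$ and, by induction, $G-v$ is a disjoint union of copies of $K_{t-1}$; but $v$ has a neighbour in some such $K_{t-1}$, and since every vertex of $K_{t-1}$ is an endpoint of a Hamiltonian path of that clique, prepending $v$ yields a $P_t$, a contradiction. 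Thus equality can only occur in the first branch and the extremal graphs are precisely the disjoint unions of $K_{t-1}$.

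I expect the genuine obstacle to be the longest-path lemma, specifically the rotation/reattachment step that converts the spanning cycle of $P$ into a longer path using connectivity; the inductive bookkeeping and the arithmetic of the two degree cases are routine by comparison, as is the parity care needed when $t$ is odd (where $\frac{t-2}{2}$ is not an integer, so the removal branch loses slack and cannot be tight anyway).
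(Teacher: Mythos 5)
Your proof is correct. Note first that the paper offers no proof of this lemma at all: it is stated as a known theorem and cited to Erd\H{o}s and Gallai \cite{EG59}, so there is no internal argument to compare yours against. What you have written is essentially the standard textbook derivation of the Erd\H{o}s--Gallai path theorem: induction on $n$ with a dichotomy at $\delta(G)\approx \frac{t-2}{2}$, where the low-degree branch deletes a minimum-degree vertex and the high-degree branch is handled componentwise by the Dirac-type lemma that a connected graph $H$ with minimum degree $\delta$ contains a path on at least $\min\{|H|,2\delta+1\}$ vertices. Each step checks out: in the path lemma, maximality of $P=v_0v_1\cdots v_\ell$ confines $N(v_0)$ and $N(v_\ell)$ to $P$, the index sets $\{i: v_0v_{i+1}\in E\}$ and $\{i: v_iv_\ell\in E\}$ both live in $\{0,\ldots,\ell-1\}$ and have total size $d(v_0)+d(v_\ell)\ge 2\delta>\ell$ in the relevant case, so they intersect and yield a cycle spanning $V(P)$, which connectivity converts into a strictly longer path unless $P$ is Hamiltonian; the two degree cases are exhaustive because $\delta(G)$ is an integer; and the equality analysis is sound --- the deletion branch can be tight only if $d(v)=\frac{t-2}{2}$ exactly (impossible for odd $t$, as you observe), in which case the inductive description of $G-v$ as a disjoint union of copies of $K_{t-1}$, together with a Hamiltonian path of such a clique started at a neighbour of $v$, manufactures a $P_t$. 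For completeness you might remark that this neighbour exists because $\frac{t-2}{2}\ge 1$ once $t\ge 4$, while the cases $t\le 3$ are immediate; this is the only (cosmetic) gap. So your argument is a complete, self-contained proof of a lemma the paper merely imports, and the citation buys the paper nothing beyond brevity.
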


\begin{lemma} \cite{CFTZ20} \label{lemlb}
If  $G$ has $t$ triangles, then
$e(G) \geq \lambda(G)^2 - \frac{3t}{\lambda (G)}$. 
\end{lemma}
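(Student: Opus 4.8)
The plan is to translate the three quantities $e(G)$, the triangle count $t$, and $\lambda(G)$ into spectral data via traces of powers of the adjacency matrix, and then extract the inequality from a single nonnegative sum. Write $A=A(G)$ and let its eigenvalues be $\lambda_1=\lambda(G)\ge \lambda_2\ge\cdots\ge\lambda_n$. The one structural fact I would exploit is that $\lambda(G)$ is the \emph{spectral radius}, so $|\lambda_i|\le \lambda(G)$ for every $i$; in particular $\lambda(G)+\lambda_i\ge 0$ for all $i$, and this nonnegativity (rather than the more obvious $\lambda(G)-\lambda_i\ge 0$) is exactly what drives the bound.

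First I would record the two standard trace identities. Since the diagonal entries of $A^2$ count closed walks of length two, $\mathrm{tr}(A^2)=\sum_i\lambda_i^2=2e(G)$; since the diagonal entries of $A^3$ count closed walks of length three and each triangle accounts for six such walks, $\mathrm{tr}(A^3)=\sum_i\lambda_i^3=6t$.

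Next I would form the combination $\lambda(G)\,\mathrm{tr}(A^2)+\mathrm{tr}(A^3)$ and factor it termwise:
\[
\lambda(G)\sum_i\lambda_i^2+\sum_i\lambda_i^3=\sum_i\lambda_i^2\bigl(\lambda(G)+\lambda_i\bigr).
\]
Each summand is nonnegative because $\lambda_i^2\ge 0$ and $\lambda(G)+\lambda_i\ge 0$. Isolating the index $i=1$, whose contribution is $\lambda(G)^2\bigl(\lambda(G)+\lambda(G)\bigr)=2\lambda(G)^3$, and discarding the remaining nonnegative terms yields
\[
2e(G)\,\lambda(G)+6t=\sum_i\lambda_i^2\bigl(\lambda(G)+\lambda_i\bigr)\ge 2\lambda(G)^3.
\]
Dividing by $2\lambda(G)>0$ gives $e(G)+\tfrac{3t}{\lambda(G)}\ge \lambda(G)^2$, which is the claimed inequality.

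I do not expect a genuine obstacle here, as the argument is short; the only point requiring care is the choice of the right linear combination of traces. A naive use of $\lambda(G)-\lambda_i\ge 0$ in the sum $\sum_i\lambda_i^2\bigl(\lambda(G)-\lambda_i\bigr)\ge 0$ only produces the weaker estimate $e(G)\ge 3t/\lambda(G)$; the essential observation is that the spectral radius bounds all eigenvalues \emph{in absolute value}, so one should pair $\lambda_i^2$ with $\lambda(G)+\lambda_i$ and isolate the Perron term to recover the $\lambda(G)^2$ main term.
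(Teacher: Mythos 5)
Your proof is correct and takes essentially the same route as the source: the paper states this lemma without proof, citing \cite{CFTZ20}, and the argument there is exactly your trace computation — from $2e(G)=\operatorname{tr}(A^2)=\sum_i\lambda_i^2$ and $6t=\operatorname{tr}(A^3)=\sum_i\lambda_i^3$, the Perron--Frobenius bound $\lambda_i\ge-\lambda(G)$ yields $6t\ge\lambda(G)^3-\lambda(G)\bigl(2e(G)-\lambda(G)^2\bigr)$, which is your inequality $\sum_i\lambda_i^2\bigl(\lambda(G)+\lambda_i\bigr)\ge 2\lambda(G)^3$ rearranged. The only point worth flagging is the implicit hypothesis $\lambda(G)>0$ (i.e., $G$ has at least one edge), which your division by $2\lambda(G)$ requires and which holds wherever the lemma is applied.
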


The next is the famous triangle removal lemma \cite{Ruzsa76}, 
which is a direct consequence of the Szemer\'{e}di regularity lemma; 
see, e.g., \cite{Conlon13, fox2011} for more details.  

\begin{lemma} \cite{Ruzsa76} \label{triangleremoval2}
For every $\varepsilon>0$, there exists  $
\delta (\varepsilon)  > 0$ such that every  $n$-vertex graph with at
most $\delta (\varepsilon) \cdot n^3$  triangles  can be made triangle-free by removing at most $\varepsilon n^2$ edges.
\end{lemma}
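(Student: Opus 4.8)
The plan is to derive this statement from Szemer\'edi's regularity lemma together with the triangle counting lemma, following the classical argument of Ruzsa and Szemer\'edi. Recall that for disjoint vertex sets $X,Y$ the pair $(X,Y)$ is \emph{$\gamma$-regular} if $|d(X',Y')-d(X,Y)|\le\gamma$ whenever $X'\subseteq X$, $Y'\subseteq Y$ with $|X'|\ge\gamma|X|$ and $|Y'|\ge\gamma|Y|$, where $d(\cdot,\cdot)$ denotes edge density. The regularity lemma supplies, for each $\gamma>0$, a constant $M(\gamma)$ so that every graph admits a partition $V=V_0\cup V_1\cup\cdots\cup V_m$ with $1/\gamma\le m\le M(\gamma)$, exceptional part $|V_0|\le\gamma n$, all remaining parts of equal size, and all but at most $\gamma m^2$ of the pairs $(V_i,V_j)$ being $\gamma$-regular. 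Given $\varepsilon>0$ I would first fix $\gamma=\varepsilon/8$ and a density threshold $d=\varepsilon/2$, and apply the regularity lemma with parameter $\gamma$.

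Next I would run the standard \emph{cleaning} step on the input graph $G$: delete an edge $uv$ with $u\in V_i$, $v\in V_j$ whenever (i) $i=j$, or $u$ or $v$ lies in $V_0$; (ii) the pair $(V_i,V_j)$ fails to be $\gamma$-regular; or (iii) $d(V_i,V_j)<d$. I would bound the deletions by category: at most $\gamma n\cdot n=\gamma n^2$ edges meet $V_0$; at most $\sum_{i\ge1}\binom{|V_i|}{2}\le n^2/(2m)\le\gamma n^2$ edges lie inside parts; at most $\gamma m^2\cdot(n/m)^2=\gamma n^2$ edges cross irregular pairs; and at most $\binom{m}{2}\cdot d\cdot(n/m)^2\le(d/2)n^2$ edges cross sparse pairs. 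With the chosen $\gamma,d$ the total is at most $3\gamma n^2+(d/2)n^2=(5\varepsilon/8)n^2\le\varepsilon n^2$, so the cleaned graph $G'$ is obtained from $G$ by removing at most $\varepsilon n^2$ edges.

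The heart of the argument is to show that if $G$ has few triangles then $G'$ is triangle-free, via the counting lemma. Suppose $G'$ contains a triangle; its three vertices must lie in three \emph{distinct} parts $V_i,V_j,V_k$ that are pairwise $\gamma$-regular with density at least $d$ between each pair (otherwise the cleaning would have removed one of its edges). The counting lemma then guarantees at least $(1-2\gamma)(d-\gamma)^3\,|V_i|\,|V_j|\,|V_k|$ triangles in $G$. Since each surviving part has size at least $(1-\gamma)n/m$ and $m\le M(\gamma)$ depends only on $\varepsilon$, this is at least $\delta n^3$ with
$$\delta=\frac{(1-2\gamma)(1-\gamma)^3(d-\gamma)^3}{M(\gamma)^3}>0.$$
Taking the contrapositive: if $G$ has at most $\delta n^3$ triangles, then $G'$ is triangle-free, and $G'$ was produced by deleting at most $\varepsilon n^2$ edges, which is exactly the claim. (Shrinking $\delta$ if necessary so that $\delta n^3<1$ whenever $n$ is too small for the regularity lemma to apply disposes of small $n$ trivially, since then the hypothesis forces $G$ to be triangle-free already.)

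I expect the main obstacle to be a self-contained proof of the counting lemma itself. For a vertex $x\in V_i$, write $N_Y(x)=N(x)\cap V_j$ and $N_Z(x)=N(x)\cap V_k$. Applying $\gamma$-regularity of $(V_i,V_j)$ to the set of $x$ with $|N_Y(x)|<(d-\gamma)|V_j|$ shows this set has size at most $\gamma|V_i|$, and similarly for $V_k$; hence at least $(1-2\gamma)|V_i|$ vertices $x$ satisfy $|N_Y(x)|\ge(d-\gamma)|V_j|$ and $|N_Z(x)|\ge(d-\gamma)|V_k|$ simultaneously. For each such $x$ both neighbourhoods have relative size at least $d-\gamma\ge\gamma$, so $\gamma$-regularity of $(V_j,V_k)$ gives at least $(d-\gamma)\cdot(d-\gamma)|V_j|\cdot(d-\gamma)|V_k|=(d-\gamma)^3|V_j||V_k|$ edges between them, each completing a triangle through $x$. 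Multiplying by the number of good $x$ yields the stated count. The only real bookkeeping subtlety is ensuring the subsets fed into the definition of regularity stay of relative size at least $\gamma$, which is precisely why $d$ is chosen comfortably larger than $\gamma$.
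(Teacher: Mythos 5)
Your proof is correct and follows precisely the route the paper points to: Lemma~\ref{triangleremoval2} is quoted from Ruzsa--Szemer\'edi without proof, noted as a direct consequence of Szemer\'edi's regularity lemma, and your cleaning step, embedded counting lemma, and parameter choices ($\gamma=\varepsilon/8$, $d=\varepsilon/2$) constitute the standard argument behind that citation. One microscopic adjustment: since your guaranteed triangle count is at least (not strictly more than) $(1-2\gamma)(1-\gamma)^3(d-\gamma)^3 n^3/M(\gamma)^3$, define $\delta$ to be, say, half that quantity so that the hypothesis ``at most $\delta(\varepsilon)\cdot n^3$ triangles'' yields a genuine contradiction when $G'$ contains a triangle.
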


\begin{lemma}[F\"uredi \cite{Furedi2015}]  \label{furedi20153}
Let $G$ be a triangle-free graph on $n$ vertices. 
If  $a>0$ and
$e(G) = e(T_{2}(n)) -a$, 
then there exists a bipartite subgraph $H \subseteq G$ such that
$e(H) \ge e(G) - a$. 
\end{lemma}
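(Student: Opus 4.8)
The plan is to avoid optimising over all cuts and instead to exhibit one explicit bipartition that already works: I would take as one side the neighbourhood of a vertex of maximum degree, and then show that the edges left outside the resulting bipartite subgraph number at most $a$. Concretely, let $z$ be a vertex of maximum degree $\Delta = d(z)$, set $A = N(z)$, and let $B = V(G)\setminus A$, so that $|A| = \Delta$ and $|B| = n - \Delta$. Since $G$ is triangle-free, $A = N(z)$ is an independent set, whence $e(A) = 0$ and $e(G) = e(A,B) + e(B)$. The bipartite subgraph $H$ with parts $A$ and $B$ then has $e(H) = e(A,B) = e(G) - e(B)$, so the entire lemma collapses to the single inequality $e(B) \le a$.

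To establish $e(B)\le a$, I would evaluate the degree sum restricted to $B$ in two ways. Because $A$ is independent, every edge incident to a vertex of $A$ must cross into $B$, so $\sum_{v\in A} d(v) = e(A,B)$; subtracting this from $2e(G) = \sum_{v} d(v)$ gives $\sum_{v\in B} d(v) = 2e(G) - e(A,B) = e(G) + e(B)$. On the other hand, every vertex has degree at most $\Delta$, so $\sum_{v\in B} d(v) \le |B|\,\Delta = (n-\Delta)\Delta \le \lfloor n^2/4\rfloor = e(T_2(n))$, where the last step is just the arithmetic--geometric mean bound for two nonnegative integers with sum $n$. Comparing the two expressions yields $e(G) + e(B) \le e(T_2(n))$, that is $e(B) \le e(T_2(n)) - e(G) = a$. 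Deleting the at most $a$ edges lying inside $B$ therefore leaves the bipartite subgraph $H$ with parts $A$ and $B$ satisfying $e(H) = e(G) - e(B) \ge e(G) - a$, which is exactly the conclusion.

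I do not expect a genuine technical obstacle here; the proof is short and needs no largeness assumption on $n$. The one real decision is the choice of bipartition, and this is where the only subtlety lies. The naive attempt is to take a maximum cut, but that produces a lower bound on the number of non-crossing edges, which points in the wrong direction. The key idea is instead to use the neighbourhood of a maximum-degree vertex, which in a triangle-free graph is automatically independent; this rigidity forces all of $A$'s incident edges to cross and is precisely what lets the maximum-degree bound together with $(n-\Delta)\Delta \le \lfloor n^2/4\rfloor$ convert into the clean estimate $e(B)\le a$. Once that gadget is in place, the rest is a one-line double count.
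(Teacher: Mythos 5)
Your proof is correct: since $G$ is triangle-free, $A=N(z)$ is independent, so $e(G)=e(A,B)+e(B)$, and your double count $\sum_{v\in B}d(v)=e(G)+e(B)\le (n-\Delta)\Delta\le\lfloor n^2/4\rfloor=e(T_2(n))$ cleanly yields $e(B)\le a$, which is all that is needed. Note that the paper states this lemma without proof (it is quoted from F\"uredi's stability paper), and your argument is essentially F\"uredi's own: the maximum-degree-neighbourhood bipartition is exactly his construction for the triangle-free case (which he iterates to handle $K_{p+1}$-free graphs for general $p$), so your proposal matches the source's approach rather than offering a genuinely different route.
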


Let $G$ be a simple graph with matching number $\beta(G)$ and maximum degree $\Delta(G)$. For given two integers $\beta$ and $\Delta$, define $f(\beta, \Delta) 
=\max\{e(G): \beta(G)\leq \beta, \Delta(G)\leq \Delta \}$.

In 1976, Chv\'atal and Hanson \cite{Chvatal76} obtained the following result.

\begin{lemma}[Chv\'atal--Hanson \cite{Chvatal76}]\label{Chvatal76}
For every two integers $\beta \geq 1$ and $\Delta \geq 1$, we have
$$f(\beta, \Delta)= \Delta \beta +\left\lfloor\frac{\Delta}{2}\right\rfloor
 \left \lfloor \frac{\beta}{\lceil{\Delta}/{2}\rceil }\right \rfloor
 \leq \Delta \beta+\beta.$$
\end{lemma}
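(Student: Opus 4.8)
The plan is to establish the identity by proving matching lower and upper bounds on $f(\beta,\Delta)$; the final inequality $f(\beta,\Delta)\le \Delta\beta+\beta$ is then immediate, because $\lfloor \Delta/2\rfloor\le \lceil \Delta/2\rceil$ forces $\lfloor \Delta/2\rfloor\lfloor \beta/\lceil \Delta/2\rceil\rfloor\le \lceil \Delta/2\rceil\cdot\bigl(\beta/\lceil \Delta/2\rceil\bigr)=\beta$. Throughout I write $\beta=a\lceil \Delta/2\rceil+b$ with $0\le b<\lceil \Delta/2\rceil$, so that $a=\lfloor \beta/\lceil \Delta/2\rceil\rfloor$ is exactly the quantity appearing in the formula.

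For the lower bound I would exhibit an explicit graph $G$ with $\beta(G)=\beta$, $\Delta(G)\le\Delta$ and the required number of edges. Take $G$ to be the disjoint union of $a$ ``dense blocks'' together with $b$ stars $K_{1,\Delta}$, where each dense block is a connected, nearly $\Delta$-regular graph on $2\lceil \Delta/2\rceil+1$ vertices (for even $\Delta$ one may simply use the clique $K_{\Delta+1}$). Since a block has an odd number $2\lceil \Delta/2\rceil+1$ of vertices, its matching number is $\lceil \Delta/2\rceil$, and near-regularity gives it $\lfloor(2\lceil \Delta/2\rceil+1)\Delta/2\rfloor$ edges; each star contributes matching number $1$ and $\Delta$ edges. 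Hence $\beta(G)=a\lceil \Delta/2\rceil+b=\beta$ and $\Delta(G)\le\Delta$, and a direct computation shows that the edge count equals $a\bigl(\Delta\lceil \Delta/2\rceil+\lfloor \Delta/2\rfloor\bigr)+\Delta b=\Delta\beta+\lfloor \Delta/2\rfloor a$, matching the claimed value.

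The upper bound is the substantial direction. Let $G$ be any graph with $\Delta(G)\le\Delta$ and $\beta(G)\le\beta$, fix a maximum matching $M=\{x_iy_i:1\le i\le \beta'\}$ with $\beta'=\beta(G)$, let $U$ be the $2\beta'$ matched vertices and $W=V(G)\setminus U$. Maximality of $M$ makes $W$ an independent set, so every edge meets $U$ and $e(G)=e(U)+e(U,W)$. I would then extract the crucial restrictions from the absence of augmenting paths: for each pair $\{x_i,y_i\}$ that sends edges into $W$ those edges must concentrate at a single endpoint (or at a single common $W$-vertex), and, more importantly, two distinct pairs that both reach $W$ cannot be joined by a cross edge in the dangerous orientation, since $w-x_i-y_i-x_j-y_j-w'$ with $w\ne w'$ would be augmenting. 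Combining these constraints with the per-pair degree bound $d(x_i)+d(y_i)\le 2\Delta$, one bounds $e(U)+e(U,W)$ by a quantity that is maximized exactly when the pairs organize themselves into the block-and-star pattern of the construction.

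The main obstacle is precisely this last step: turning the local, pairwise augmenting-path restrictions into a clean global bound that reproduces the floor functions rather than an off-by-a-constant-factor estimate. A naive per-pair count only yields something of order $\tfrac32\Delta\beta$, so the global interaction among the pairs that touch $W$ must be used in full. I expect the cleanest route is to invoke the Gallai--Edmonds structure theorem: it decomposes $V(G)$ into the factor-critical part $D$, its neighborhood $A$, and a perfectly matchable part $C$, pins down $\beta(G)$ in terms of the component count of $G[D]$ and $|A|$, and thereby reduces the problem to maximizing edges within each factor-critical component under the degree cap. The proof then finishes with a discrete optimization that allocates the matching budget $\beta$ among components of the two possible shapes (odd dense blocks versus stars), and the integrality of this allocation is what produces the terms $\lfloor \Delta/2\rfloor$ and $\lfloor \beta/\lceil \Delta/2\rceil\rfloor$.
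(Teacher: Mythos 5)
A preliminary remark on the comparison you were asked for: the paper contains no proof of this lemma at all --- it is quoted from Chv\'atal and Hanson \cite{Chvatal76} and used as a black box (indeed only the Abbott--Hanson--Sauer special case $f(k-1,k-1)$ is ever applied). So the only question is whether your argument stands on its own. Your reduction of the final inequality to $\lfloor\Delta/2\rfloor\lfloor\beta/\lceil\Delta/2\rceil\rfloor\le\lceil\Delta/2\rceil\cdot(\beta/\lceil\Delta/2\rceil)=\beta$ is correct, and your lower-bound construction is complete and checks out: writing $\beta=a\lceil\Delta/2\rceil+b$, the $a$ nearly $\Delta$-regular blocks on $2\lceil\Delta/2\rceil+1$ vertices (for odd $\Delta$ such a block exists, e.g.\ $K_{\Delta+2}$ minus the disjoint union of a $P_3$ and a matching on the remaining $\Delta-1$ vertices, leaving one vertex of degree $\Delta-1$) together with $b$ stars $K_{1,\Delta}$ have maximum degree $\Delta$, matching number exactly $a\lceil\Delta/2\rceil+b=\beta$, and $\Delta\beta+a\lfloor\Delta/2\rfloor$ edges, exactly as you compute.

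The genuine gap is the upper bound, and you flag it yourself: the two augmenting-path facts you isolate (edges from a matched pair into $W$ concentrate at one endpoint or one common $W$-vertex; no augmenting cross edges between pairs reaching $W$) are true but, as you concede, yield only an estimate of order $\tfrac32\Delta\beta$, and the Gallai--Edmonds finish is named rather than carried out --- yet that ``discrete optimization'' is precisely the content of the theorem, so as written the substantial half of the proof is missing. For the record, the route you name does close, and quickly. Let $D,A,C$ be the Gallai--Edmonds decomposition, with factor-critical components $D_1,\dots,D_c$ of $G[D]$, $|D_i|=2m_i+1$; then $\beta(G)=|A|+\tfrac12|C|+\sum_i m_i$, there are no $D$--$C$ edges, and
\[ e(G)\ \le\ \Delta|A|+\frac{\Delta}{2}|C|+\sum_{i=1}^{c}e(D_i),\qquad e(D_i)\ \le\ \min\Bigl\{m_i(2m_i+1),\ \Delta m_i+\Bigl\lfloor\frac{\Delta}{2}\Bigr\rfloor\Bigr\}. \]
When $m_i<\lceil\Delta/2\rceil$ one has $2m_i+1\le\Delta$, so the first bound gives $e(D_i)\le\Delta m_i$ with no surplus; the degree-sum bound covers the remaining components, each of which consumes at least $\lceil\Delta/2\rceil$ units of the budget $\sum_i m_i\le\beta(G)\le\beta$, so at most $\lfloor\beta/\lceil\Delta/2\rceil\rfloor$ surplus terms $\lfloor\Delta/2\rfloor$ occur, and $e(G)\le\Delta\beta(G)+\lfloor\Delta/2\rfloor\lfloor\beta/\lceil\Delta/2\rceil\rfloor\le\Delta\beta+\lfloor\Delta/2\rfloor\lfloor\beta/\lceil\Delta/2\rceil\rfloor$. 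This short allocation argument is exactly what your write-up defers; without it (or an equivalent), the proposal establishes only the lower bound and the trailing inequality, not the stated identity.
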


We will frequently use a special case proved by Abbott, Hanson and Sauer \cite{Abbott72}:
$$f(k-1,k-1) = \left\{
  \begin{array}{ll}
   k^2-k, \quad~~  \mbox{if $k$ is odd,} \\
    k^2-\frac32 k, \quad  \mbox{if $k$ is even}.
  \end{array}
\right.$$
Furthermore, the extremal graphs attaining the equality case 
are exactly those we embedded into the Tur\'{a}n graph $T_{2}(n)$  to obtain the extremal $F_k$-free graph.

\section{Proof of  Theorem~\ref{thmmain}}

In the sequel, 
we always assume that $G$ is
 a graph on $n$ vertices 
 containing no $H_{s,t_1,\ldots ,t_k}$ as a subgraph and attaining 
 the maximum spectral radius. 
 The aim of this section is to prove that 
 $e(G) = \mathrm{ex}(n, H_{s,t_1,\ldots ,t_k})$ for $n$ large enough.

First of all, we note that $G$ must be connected 
since adding an edge between different 
components will increase the spectral radius 
and also keep the resulting graph being $H_{s,t_1,\ldots ,t_k}$-free.
Let $\lambda (G)$ be the spectral radius of $G$. 
By the Perron--Frobenius Theorem \cite[p. 534]{Horn13}, 
we know that $\lambda_1$
has an  eigenvector with all entries  positive. 
We denote such an eigenvector by  $\mathbf{x}$.  
For a vertex $v\in V(G)$, 
we will write $\mathbf{x}_v$ for 
the eigenvector entry of $\mathbf{x}$ corresponding to $v$. 
We  may normalize $\mathbf{x}$ so that it has maximum entry equal to  $1$, and let $z$ be a vertex such that $\mathbf{x}_z = 1$. 
If there are multiple such vertices, 
we choose and fix $z$ arbitrarily among them. 

In the sequel, 
we shall prove Theorem \ref{thmmain} iteratively, giving successively better lower bounds on both $e(G)$ and the eigenvector entries of all of the other vertices, until finally we can show that $e(G) = \mathrm{ex}(n, H_{s,t_1,\ldots ,t_k})$.

The proof of Theorem~\ref{thmmain}  is outlined as follows.

\begin{itemize}
\item[$\spadesuit$] 
We apply Lemma \ref{lemlb} to give a lower bound 
 $e(G)\ge \frac{n^2}{4} - O(n)$; see Lemma \ref{lemma31}. 
 Then we use the triangle removal lemma and F\"uredi's stability result, and show that $G$ has a very large bipartite subgraph 
on parts $S,T$ with $\frac{n}{2} - o(n)\le |S|,|T| 
\le \frac{n}{2}+ o(n)$. 
Moreover, we also have $e(S,T)\ge \frac{n^2}{4} - o(n^2)$; 
see Lemma \ref{maxcut}.

\item[$\heartsuit$] 
We show that 
the number of vertices 
that have $\Omega (n)$ neighbors on its side of the partition 
is bounded by $o(n)$,  and the number of vertices 
that have degree less than $(\frac{1}{2}- c_0 )n$ is bounded by 
$O(1)$ for some small constant $c_0>0$; see Lemmas \ref{Wupper} and \ref{Lupper} respectively. 
Furthermore, 
we will prove that 
such vertices do not exist, 
and both $G[S]$ and $G[T]$ are $K_{1,s+k}$-free and $M_{s+k}$-free; 
see Lemmas \ref{lem355}, \ref{lem336} and \ref{Lempty}.

\item[$\clubsuit$] 
Based on the previous lemmas, 
we shall refine the structure of $G$, and 
improve the lower bound of $e(G)$ to 
$e(G)\ge \frac{n^2}{4}- O(1)$ and 
refine the bisection $\frac{n}{2} - O(1)\le |S|,|T| \le \frac{n}{2} +O(1)$ 
and also $e(S,T) \ge \frac{n^2}{4} - O(1)$; 
see Lemma \ref{STlambdarefine}. 
Moreover, we shall prove that 
$\mathbf{x}_u = 1- o(1)$ for every $u\in V(G)$; 
see Lemma \ref{second lower bound evector2}.

\item[$\diamondsuit$] 
Once we know that all vertices have eigenvector entries  close to $1$, we can show that the bipartition is balanced; 
see Lemmas \ref{lemma311}, \ref{lemma312} and \ref{cut balanced}. 
This implies that $G$ can be converted to a graph in 
$\mathrm{Ex}(n, H_{s,t_1,\ldots ,t_k})$ by  deleting small number of 
edges within $S,T$ and adding small number of edges 
between $S$ and $T$.   
Invoking these facts, 
we finally show that $e(G) = \mathrm{ex}(n, H_{s,t_1,\ldots ,t_k})$.
\end{itemize}

 Let $H $ 
 be an $H_{s,t_1,\ldots ,t_k}$-free  
 graph on $n$ vertices with maximum number of edges. 
 Since $G$ is the graph maximizing the spectral radius over all $H_{s,t_1,\ldots ,t_k}$-free graphs, in view of Theorem \ref{thmHY},
 we can see by the Rayleigh quotient \cite[p. 234]{Horn13} 
 or \cite[p. 267]{Zhang11} that 
\begin{equation}\label{first lower bound1}
\lambda (G) \geq \lambda (H) \geq \frac{\mathbf{1}^T A(H) \mathbf{1}}{\mathbf{1}^T\mathbf{1}}
= \frac{ 2(\left\lfloor {n^2}/{4}\right \rfloor 
 + (s+k-1)^2)}{n} >\frac{n}{2}.
\end{equation}

\begin{lemma} \label{lemma31}
Let $c$ be the largest length of  cycles of $H_{s,t_1,\ldots ,t_k}$. 
Then 
  \begin{equation}\label{maxsize}
e(G) \geq  \frac{n^2}{4}-(s+k)cn.
  \end{equation}
\end{lemma}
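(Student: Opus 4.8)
The plan is to combine the triangle-counting inequality of Lemma~\ref{lemlb} with the lower bound $\lambda(G) > n/2$ already established in \eqref{first lower bound1}. If $G$ has $t$ triangles, then Lemma~\ref{lemlb} gives $e(G) \ge \lambda(G)^2 - 3t/\lambda(G)$, and using $\lambda(G) > n/2$ (so that $\lambda(G)^2 > n^2/4$ and $1/\lambda(G) < 2/n$) this becomes $e(G) > \frac{n^2}{4} - \frac{6t}{n}$. Hence everything reduces to an upper bound on the number of triangles: it suffices to show $t \le \frac{1}{6}(s+k)c\,n^2$, since then $\frac{6t}{n} \le (s+k)cn$ and the claimed inequality follows.

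To bound $t$, I would count triangles through each vertex. A triangle containing $v$ corresponds precisely to an edge of the induced subgraph $G[N(v)]$, so $t = \frac13 \sum_{v} e(G[N(v)])$. The key structural observation is that $G[N(v)]$ cannot contain a long path. Put $\ell = 2s + \sum_{i=1}^{k}(t_i - 1)$. If $G[N(v)]$ contained a path on $\ell$ vertices, I would cut it into $s$ consecutive segments on two vertices followed by $k$ consecutive segments on $t_1 - 1, \ldots, t_k - 1$ vertices (which is possible since the segment lengths sum to $\ell$). Joining $v$ to the two endpoints of each segment turns each length-two segment into a triangle through $v$ and turns the segment on $t_i-1$ vertices into a cycle of length $t_i$ through $v$; as the segments are pairwise vertex-disjoint and $v \notin N(v)$, these petals meet only at $v$. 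This produces a copy of $H_{s,t_1,\ldots,t_k}$ centered at $v$, contradicting $H_{s,t_1,\ldots,t_k}$-freeness. Therefore $G[N(v)]$ is $P_\ell$-free, and Lemma~\ref{lempath} yields $e(G[N(v)]) \le \frac12(\ell - 2)d(v) \le \frac12(\ell-2)n$.

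Summing over the $n$ vertices gives $t \le \frac16(\ell - 2)n^2$, and substituting into the first step yields $e(G) > \frac{n^2}{4} - (\ell - 2)n$. It then remains only to check $\ell - 2 \le (s+k)c$: since each $t_i \le c$ we have $\sum_{i=1}^{k}(t_i - 1) \le k(c-1) \le kc$, while $2s - 2 \le 2s \le sc$ because $c \ge 5$; adding these bounds gives $\ell - 2 \le (s+k)c$, as needed. I expect the only genuinely delicate point to be the chopping argument of the second paragraph, namely verifying that a single sufficiently long path inside one neighborhood already forces the \emph{whole} flower graph with vertex-disjoint petals. Everything else is a direct substitution. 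This path-in-the-neighborhood idea — replacing the matching condition that governs fans by a longest-path condition — is exactly the observation flagged in the introduction as the crux of the extension, and I anticipate it reappearing in the later stability lemmas.
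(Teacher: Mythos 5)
Your proposal is correct and follows essentially the same route as the paper: both bound the triangle count by observing that $H_{s,t_1,\ldots,t_k}$-freeness forbids a long path in every neighborhood $G[N(v)]$, apply the Erd\H{o}s--Gallai bound (Lemma~\ref{lempath}) to each $G[N(v)]$, and then feed the resulting bound on $t$ into Lemma~\ref{lemlb} together with $\lambda(G) > n/2$ from \eqref{first lower bound1}. The only differences are cosmetic: you work with the sharper threshold $\ell = 2s + \sum_i(t_i-1)$ and reconcile it with $(s+k)c$ at the end, whereas the paper forbids $P_{(s+k)c}$ outright, and your segment-chopping argument simply spells out the embedding that the paper compresses into the one-line claim that $K_1 \vee P_{(s+k)c}$ contains $H_{s,t_1,\ldots,t_k}$.
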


\begin{proof}
Since $G$ is $H_{s,t_1,\ldots ,t_k}$-free, 
the neighborhood of any vertex does not contain $P_{(s+k)c}$ (a path on $(s+k)c$ vertices) as a subgraph. 
Otherwise, $G$ contains the join graph 
$K_1 \vee P_{(s+k)c}$, which contains a copy of $H_{s,t_1,\ldots ,t_k}$. 
Let $t$ be the number of triangles in $G$.
By Lemma~\ref{lempath}, we can obtain the following upper bound on $t$, 
\[3t = \sum_{v\in V(G)} e(G[N(v)]) \leq  \sum_{v\in V(G)} \mathrm{ex}(n, P_{(s+k)c}) < \sum_{v\in V(G)}  
\frac{(s+k)cn}{2}=\frac{(s+k)c}{2}n^2.\] 
This gives $t \leq \frac{(s+k)c}{6}n^2 $. 
From Lemma \ref{lemlb} and (\ref{first lower bound1}), we obtain
  \begin{equation}\label{maxsize}
e(G) \geq  \lambda^2(G) -\frac{6t}{n} \geq \frac{n^2}{4}-(s+k)cn.
  \end{equation}
  This completes the proof. 
\end{proof}

\begin{lemma}\label{maxcut}
Let $\varepsilon$  be a fixed positive constant. There exists an $n_0(\varepsilon, s,k,c)$ such that $G$ has a partition $V=S\cup T$ which gives a  maximum bipartite subgraph,  and
$$e(S, T)\ge \left  (\frac{1}{4}-\varepsilon \right)n^2$$
for $n\ge n_0(\varepsilon, s,k,c)$. Furthermore
\begin{equation} \label{eqeq2}
\left(\frac{1}{2} - \sqrt{\varepsilon} \right) n \leq |S|, |T| \leq \left(\frac{1}{2} + \sqrt{\varepsilon}\right) n.
\end{equation}
\end{lemma}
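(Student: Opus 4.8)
The plan is to combine the edge lower bound of Lemma~\ref{lemma31} with the triangle removal lemma and F\"uredi's stability result, exactly as advertised in the outline. First I would recall from the proof of Lemma~\ref{lemma31} that $G$ has at most $t\le \frac{(s+k)c}{6}n^2$ triangles. Crucially this count is $o(n^3)$, so for any auxiliary target $\varepsilon'>0$ and all sufficiently large $n$ we have $t\le \delta(\varepsilon')\,n^3$, where $\delta(\varepsilon')$ is the constant supplied by Lemma~\ref{triangleremoval2}. Applying the triangle removal lemma, I can delete at most $\varepsilon' n^2$ edges from $G$ to obtain a triangle-free spanning subgraph $G'$ with
\[
e(G')\ge e(G)-\varepsilon' n^2 \ge \frac{n^2}{4}-\varepsilon' n^2-(s+k)cn .
\]

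Next I would feed $G'$ into F\"uredi's stability result. Writing $e(G')=e(T_2(n))-a$, the previous bound together with $e(T_2(n))=\lfloor n^2/4\rfloor\le n^2/4$ gives $a\le \varepsilon' n^2+(s+k)cn$. Lemma~\ref{furedi20153} then produces a bipartite subgraph $H\subseteq G'$ with $e(H)\ge e(G')-a\ge e(T_2(n))-2a$. Now take $S,T$ to be the two parts of a maximum bipartite subgraph (a max-cut) of $G$; extending the bipartition of $H$ to all of $V(G)$ shows this cut is at least as large as $e(H)$, and since $G'\subseteq G$ every edge of $H$ is counted, so
\[
e(S,T)\ge e(H)\ge \frac{n^2}{4}-2\varepsilon' n^2-2(s+k)cn-\tfrac14 .
\]
Choosing $\varepsilon'=\varepsilon/4$ and taking $n$ large enough (in terms of $\varepsilon,s,k,c$) to absorb the linear and constant terms yields $e(S,T)\ge \left(\tfrac14-\varepsilon\right)n^2$, which is the first assertion. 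The balance of the two sides then follows purely by counting: since the cut edges lie in the complete bipartite graph on $S,T$, we have $|S|\,|T|\ge e(S,T)\ge \left(\tfrac14-\varepsilon\right)n^2$. Writing $|S|=\tfrac n2+d$ and $|T|=\tfrac n2-d$ with $d\ge 0$, this reads $\tfrac{n^2}{4}-d^2\ge \left(\tfrac14-\varepsilon\right)n^2$, hence $d^2\le \varepsilon n^2$ and $d\le \sqrt{\varepsilon}\,n$, which is precisely~(\ref{eqeq2}).

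I do not expect any genuine obstacle here; the substance of the argument is the quantitative bookkeeping that threads the three input lemmas together. The one point requiring care is the order of quantifiers: one must fix the target $\varepsilon$ first, then set $\varepsilon'=\varepsilon/4$, then invoke Lemma~\ref{triangleremoval2} to obtain $\delta(\varepsilon')$, and only afterwards demand that $n$ be large enough that both $\frac{(s+k)c}{6}n^2\le \delta(\varepsilon')\,n^3$ and the linear and constant error terms are dominated by $\tfrac{\varepsilon}{2}n^2$. It is also worth emphasising that passing to the max-cut partition of $G$, rather than using the bipartition of $H$ directly, is what simultaneously guarantees that $(S,T)$ is a genuine partition of all of $V(G)$ and can only increase the cut size, so no edges of $H$ are lost in the transition.
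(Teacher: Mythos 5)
Your proposal is correct and follows essentially the same route as the paper: bound the triangle count via Lemma~\ref{lemma31}, apply the triangle removal lemma with parameter $\varepsilon/4$, feed the resulting triangle-free subgraph into F\"uredi's stability lemma, pass to a maximum cut of $G$, and deduce the balance of $|S|,|T|$ from $|S||T|\ge e(S,T)$. The only differences are cosmetic (your explicit $|S|=\tfrac n2+d$ computation versus the paper's argument by contradiction, and your careful remark about extending the bipartition of $H$ to all of $V(G)$, which the paper leaves implicit).
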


\begin{proof}
  Let $\delta (\frac{\varepsilon }{4})$ be the parameter 
  chosen from  the Triangle Removal Lemma  \ref{triangleremoval2}. 
In the proof of Lemma \ref{lemma31}, 
we know that  $t \leq  \frac{(s+k)c}{6n}n^3\le 
\delta (\frac{\varepsilon}{4}) n^3$ for $n\ge  n_0= \frac{(s+k)c}{6\delta (\varepsilon / 4)}$.   By Lemma \ref{triangleremoval2}, there exists a large integer $n_0$ such that  the graph $G_1$ obtained from $G$ by deleting
       at most $\frac{\varepsilon}{4} n^2$ edges is $K_3$-free for  $n\ge n_0$. Hence the size of the   graph $G_1$ of order $n$ satisfies
    \[ e(G_1)\ge e(G)-\frac{\varepsilon}{4} n^2\ge 
 \frac{n^2}{4}  - 
    (s+k)cn -\frac{\varepsilon}{4}n^2.\]
      Note that  $e(G_1)\le   e(T_{2}(n))$ by the Mantel Theorem.  
      We define
$a:= e(T_{2}(n))-e(G_1)$, then 
$0\le a \le (s+k)cn +\frac{\varepsilon}{4}n^2$. 
    By Lemma \ref{furedi20153}, $G_1$ contains a bipartite subgraph $G_2$ such that $e(G_2)\ge e(G_1)- a$.
    Hence,    for $n$ sufficiently large, we have
    \begin{eqnarray*}
    e(G_2) \ge  e(G_1)- a 
    \ge  \frac{n^2}{4}  - 
    2(s+k)cn -\frac{\varepsilon}{2}n^2
 \ge        \left (\frac{1}{4}-\varepsilon \right) n^2.
    \end{eqnarray*}
        Therefore,   $G$ has  a partition  $V=S\cup T$ which gives a  maximum cut  such that
     \begin{equation}\label{maxcut1}
     e(S,T)\ge e(G_2)\ge \left  (\frac{1}{4}-\varepsilon \right) n^2.
    \end{equation}
    Furthermore, without loss of generality, we may assume that  $|S|\le |T|$. If $|S|<(\frac{1}{2}-\sqrt{\varepsilon})n$, then $|T|=n-|S|>
    (\frac{1}{2}+\sqrt{\varepsilon})n$. So
    $$e(S,T)\le |S||T| <  \left  (\frac{1}{2}-\sqrt{\varepsilon} \right)n  \left (\frac{1}{2}+\sqrt{\varepsilon} \right)n=\left (\frac{1}{4}-\varepsilon \right)n^2,$$
    which contradicts to  Eq. (\ref{maxcut1}).
    Therefore it follows that
    $$\left (\frac{1}{2}-\sqrt{\varepsilon} \right)n\le |S|, |T|\le \left  (\frac{1}{2}+\sqrt{\varepsilon} \right)n.$$
    Hence the assertion (\ref{eqeq2}) holds.
        \end{proof}

For a vertex $v$, let $d_S(v) = |N(v) \cap S|$ and $d_T(v) = |N(v) \cap T|$. 
Next, we consider the set of vertices that have many neighbors which are not in the cut.

\begin{lemma}\label{Wupper}
Let $\varepsilon , \delta $ be two sufficiently small constants with 
$\varepsilon <{\delta^2}/{3}$. 
We denote
\begin{equation} \label{defW}
W: = \left\{ v\in S: d_S(v) \geq \delta n\right\} \cup \left\{v \in T: d_T(v) \geq \delta n\right\}. 
\end{equation}
For  sufficiently large $n$, we have
\[ |W| \le \frac{2\delta }{3} n + 
 \frac{ 2(s+k-1)^2}{\delta n} <\delta n.  \]
\end{lemma}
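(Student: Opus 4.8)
The plan is to bound $|W|$ by a single double-counting inequality: by its definition in (\ref{defW}), every vertex of $W$ has at least $\delta n$ neighbors on \emph{its own} side of the partition, so the quantity $|W|\cdot \delta n$ is dominated by twice the number of edges lying inside the parts, namely $2\bigl(e(S)+e(T)\bigr)$. Everything therefore reduces to showing that the ``wrong-side'' edge count $e(S)+e(T)$ is of order at most $\varepsilon n^2$. This is where the forbidden subgraph and the near-optimal cut enter.

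To bound $e(S)+e(T)$ I would combine the two estimates already at hand. Since $G$ is $H_{s,t_1,\ldots,t_k}$-free, Theorem~\ref{thmHY} gives $e(G)\le \lfloor n^2/4\rfloor+(s+k-1)^2\le \frac{n^2}{4}+(s+k-1)^2$ for $n$ large, an upper bound on the total number of edges; and Lemma~\ref{maxcut} gives the lower bound $e(S,T)\ge(\frac14-\varepsilon)n^2$ on the cut. Writing $e(G)=e(S)+e(T)+e(S,T)$ and subtracting yields
\begin{equation*}
e(S)+e(T)=e(G)-e(S,T)\le \frac{n^2}{4}+(s+k-1)^2-\left(\frac14-\varepsilon\right)n^2=\varepsilon n^2+(s+k-1)^2.
\end{equation*}
Next I would carry out the double count $2\bigl(e(S)+e(T)\bigr)=\sum_{v\in S}d_S(v)+\sum_{v\in T}d_T(v)$, and then restrict this sum to $W$, discarding the remaining nonnegative terms; by the defining degree condition (\ref{defW}) this gives $2\bigl(e(S)+e(T)\bigr)\ge |W|\cdot\delta n$. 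Combining the two displays,
\begin{equation*}
|W|\le \frac{2\bigl(e(S)+e(T)\bigr)}{\delta n}\le \frac{2\varepsilon}{\delta}\,n+\frac{2(s+k-1)^2}{\delta n}.
\end{equation*}

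Finally, the hypothesis $\varepsilon<\delta^2/3$ is designed precisely so that $\frac{2\varepsilon}{\delta}<\frac{2\delta}{3}$, turning the first term into $\frac{2\delta}{3}n$ and yielding the stated bound; and for $n$ large enough the residual term $\frac{2(s+k-1)^2}{\delta n}$ drops below $\frac{\delta}{3}n$, so the whole expression is less than $\delta n$. I do not expect a genuine obstacle in this lemma, since it is a clean consequence of the extremal count of Theorem~\ref{thmHY} and the almost-balanced maximum cut of Lemma~\ref{maxcut}. The only point deserving care is that \emph{both} inputs must be used at once: neither the edge-count upper bound nor the cut lower bound alone forces the within-part edges to total merely $\varepsilon n^2+O(1)$; it is their difference that does so, and the coupling $\varepsilon<\delta^2/3$ between the two small parameters must be respected for the final absorption to go through.
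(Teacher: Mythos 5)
Your proposal is correct and follows essentially the same route as the paper's own proof: bound $e(S)+e(T)\le \varepsilon n^2+(s+k-1)^2$ by subtracting the cut lower bound of Lemma~\ref{maxcut} from the Tur\'an-type upper bound of Theorem~\ref{thmHY}, then double-count degrees restricted to $W$ to get $|W|\,\delta n\le 2\bigl(e(S)+e(T)\bigr)$, and finish with $\varepsilon<\delta^2/3$. No meaningful differences from the published argument.
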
 

\begin{proof}
Firstly, by Theorem \ref{thmHY}, we know that 
$e(G)\le \mathrm{ex} (n, H_{s,t_1,\ldots ,t_k})\le \frac{n^2}{4}+(s+k-1)^2$.  
   Note that $e(S,T) \geq \left(\frac{1}{4} - \varepsilon \right)n^2$ by  Lemma~\ref{maxcut}.
 Hence
 \begin{equation}
 \begin{aligned}\label{WL-1}
 e(S)+e(T)=e(G)-e(S, T) &\le \frac{n^2}{4}+
 (s+k-1)^2- \left(\tfrac{1}{4} - \varepsilon \right)n^2 \\ 
 & =\varepsilon n^2+(s+k-1)^2.
 \end{aligned}
 \end{equation}
 On the other hand, if we
 denote  $W_1=W\cap S$ and $W_2=W\cap T$, then we get
 \[ 2e(S) =\sum_{u\in S}d_{S}(u) \ge  \sum_{u\in W_1}d_S(u)\ge |W_1|\delta n, \] 
 and similarly, we also have 
 \[   2e(T) = \sum_{u\in T}d_{T}(u)\ge  \sum_{u\in W_2}d_T(u)\ge |W_2|\delta n.\]
  So
  \begin{equation}\label{WL-2}
  e(S)+e(T)\ge (|W_1|+|W_2|)\frac{\delta n}{2} 
  =\frac{\delta n}{2} |W|.
  \end{equation}
  Combining \eqref{WL-1} and \eqref{WL-2}, we get
$ \frac{\delta n}{2}|W|\le \varepsilon n^2+(s+k-1)^2$, 
  i.e.,
\[ |W|\le \frac{2\varepsilon n^2+ 2(s+k-1)^2}{\delta n}. \] 
Note that $\varepsilon < \delta^2/3$, 
we can get $|W|< \delta n$ for sufficiently large $n$. 
\end{proof} 

\begin{lemma}\label{Lupper} 
Let $k\geq 2$.
We denote $c_0:=\frac{1}{8c(s+k)}$  and 
\begin{equation} \label{defL}
L:= \Bigl\{v\in V(G): d(v) \leq 
\left(\tfrac{1}{2}- c_0 \right) n\Bigr\}.
\end{equation}
Then
$$|L|\le 16c^2(s+k)^2. $$
\end{lemma}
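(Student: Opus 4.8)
The plan is to frame everything spectrally, translate low degree into a deficit of cross-cut edges, and then upgrade the resulting $o(n)$ estimate to the claimed constant $16c^2(s+k)^2 = 2(s+k)c/c_0$ by exploiting the extremality of $G$ through its principal eigenvector.

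First I would record the two-sided control on $\lambda:=\lambda(G)$. From \eqref{first lower bound1} we have $\lambda>n/2$, while Lemma~\ref{lemlb} together with the triangle bound $t\le\frac{(s+k)c}{6}n^2$ established in Lemma~\ref{lemma31} and the Tur\'an-type upper bound $e(G)\le\frac{n^2}{4}+(s+k-1)^2$ of Theorem~\ref{thmHY} give $\lambda^2\le e(G)+\frac{3t}{\lambda}\le\frac{n^2}{4}+(s+k)cn+(s+k-1)^2$. In particular $\lambda\le\frac n2+(s+k)c+o(1)$, and rearranging the same chain yields the matching lower bound $e(G)\ge\lambda^2-(s+k)cn>\frac{n^2}{4}-(s+k)cn$, so $G$ has almost exactly $\frac{n^2}{4}$ edges and $\lambda$ sits only $O(1)$ above $n/2$.

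Next I would turn low degree into a cut deficit. Fix $v\in L$, say $v\in S$. Since $S\cup T$ is a maximum cut, $d_S(v)\le d_T(v)$, and since $|T|\ge(\frac12-\sqrt\varepsilon)n$ by Lemma~\ref{maxcut}, the number of non-neighbors of $v$ in $T$ is at least $|T|-d_T(v)\ge|T|-d(v)\ge(\frac12-\sqrt\varepsilon)n-(\frac12-c_0)n=(c_0-\sqrt\varepsilon)n$. Summing the symmetric statement over both sides gives $e(S,T)\le|S||T|-\frac{c_0-\sqrt\varepsilon}{2}\,|L|\,n$. Substituting into $e(G)=e(S)+e(T)+e(S,T)$, and comparing with the spectral lower bound $e(G)>\frac{n^2}{4}-(s+k)cn$ and with $|S||T|\le\frac{n^2}{4}$, produces $\frac{c_0-\sqrt\varepsilon}{2}\,|L|\,n<\bigl(e(S)+e(T)\bigr)+(s+k)cn$. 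Choosing $\varepsilon$ small enough that $\sqrt\varepsilon<c_0/2$, this already forces $|L|=o(n)$.

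The hard part is discarding the term $e(S)+e(T)$: Lemma~\ref{Wupper} supplies only the weak bound $e(S)+e(T)\le\varepsilon n^2$, so a purely combinatorial count stalls at $|L|=O(\varepsilon n)$ rather than the stated constant. To reach $O(1)$ I would invoke the maximality of $G$ through the eigenvector. For $v\in L$ the eigenequation gives $\lambda\mathbf{x}_v=\sum_{u\sim v}\mathbf{x}_u\le d(v)\le(\frac12-c_0)n$, hence $\mathbf{x}_v<1-2c_0$, so each low-degree vertex carries strictly sub-maximal weight. Rewiring $v$ into a copy of the maximum-weight vertex $z$ (delete the edges at $v$ and join $v$ to $N(z)\setminus\{v\}$) changes $\mathbf{x}^{T}A\mathbf{x}$ by $2\mathbf{x}_v\bigl(\sum_{u\in N(z)}\mathbf{x}_u-\sum_{u\in N(v)}\mathbf{x}_u\bigr)\ge 2\mathbf{x}_v\bigl(\lambda(1-\mathbf{x}_v)-\mathbf{x}_v\bigr)>0$ for large $n$, so the Rayleigh quotient of $\mathbf{x}$ on the modified graph strictly exceeds $\lambda$. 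By maximality the modified graph must then contain $H_{s,t_1,\ldots,t_k}$, and since the only new edges lie at $v$, this copy must use both $v$ and $z$ as non-adjacent twins inside the intersecting-cycle structure. The main obstacle, and the point where the hypothesis $k\ge 2$ enters, is to make this twin-counting quantitative: one must show that the available common neighbors that let $v$ play its role are so plentiful that, once more than $16c^2(s+k)^2$ such vertices $v$ are present, their twin-configurations already assemble into a genuine copy of $H_{s,t_1,\ldots,t_k}$ inside $G$, contradicting that $G$ is $H_{s,t_1,\ldots,t_k}$-free. Keeping careful track of exactly which cross-edges survive at each $v\in L$ while carrying out this embedding is the crux of the argument.
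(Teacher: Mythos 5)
There is a genuine gap, and you name it yourself: the entire passage from $|L|=O(\varepsilon n)$ to $|L|\le 16c^2(s+k)^2$ --- what you call ``the crux of the argument'' --- is never carried out. Worse, the rewiring step cannot deliver a contradiction in the form you set it up: joining $v$ to $N(z)\setminus\{v\}$ may create triangles and longer odd cycles through $v$ (unlike the paper's later rewiring in Lemma \ref{Lempty}, where $v$ is joined to an \emph{independent} set $I_T$ precisely so that no new odd cycle arises), so the modified graph $G^+$ can legitimately contain $H_{s,t_1,\ldots ,t_k}$ while $G$ does not, and $\lambda(G^+)>\lambda(G)$ then contradicts nothing. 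To get a contradiction you must pull the copy of $H_{s,t_1,\ldots ,t_k}$ in $G^+$ back into $G$; this works when the copy avoids $z$ (replace $v$ by its twin $z$), but the case in which every copy uses both of the non-adjacent twins $v$ and $z$ is exactly what your sketch leaves unresolved, and the plan to ``assemble'' the twin-configurations of many low-degree vertices into a copy of $H_{s,t_1,\ldots ,t_k}$ inside $G$ is not an argument. Note also that the tools one would naturally use to repair this (large common neighborhoods, the independent sets $I_S,I_T$) are developed in the paper only in Lemmas \ref{lem355}--\ref{Lempty}, all of which take the present lemma as input, so your route risks circularity.

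The idea you are missing is far more elementary and makes the cut $S,T$, the eigenvector, and Lemma \ref{Wupper} all unnecessary: delete the low-degree vertices and compare edge counts with Theorem \ref{thmHY} on fewer vertices. Suppose $|L|>16c^2(s+k)^2$ and take $L'\subseteq L$ with $|L'|=16c^2(s+k)^2$. By \eqref{maxsize}, $e(G-L')\ge e(G)-\sum_{v\in L'}d(v)\ge \frac{n^2}{4}-(s+k)cn-|L'|\left(\tfrac12-c_0\right)n$. Each deleted vertex costs at most $(\tfrac12-c_0)n$ edges, while the Tur\'{a}n bound $\lfloor m^2/4\rfloor+(s+k-1)^2$ drops by roughly $n/2$ per deleted vertex; so each deletion gains a surplus of about $c_0n$, and the initial deficit $(s+k)cn$ is wiped out once $|L'|c_0>(s+k)c$, i.e.\ $|L'|>8c^2(s+k)^2$. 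With $|L'|=16c^2(s+k)^2=2(s+k)c/c_0$ one gets $e(G-L')>\frac{(n-|L'|)^2}{4}+(s+k-1)^2=\mathrm{ex}(n-|L'|,H_{s,t_1,\ldots ,t_k})$ for $n$ large, so $G-L'$, and hence $G$, contains $H_{s,t_1,\ldots ,t_k}$ by Theorem \ref{thmHY} --- a contradiction. Your own observation that $16c^2(s+k)^2=2(s+k)c/c_0$ is precisely the bookkeeping of this deletion argument; no spectral input beyond the edge bound \eqref{maxsize} is needed.
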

\begin{proof}
 Suppose that $|L| >16c^2(s+k)^2$.
 Then let $L^{\prime}\subseteq L$ with $|L^{\prime}|=16c^2(s+k)^2$.
 Then  it follows that
   \begin{eqnarray*}
e(G-L^{\prime}) &\ge& e(G)-\sum_{v\in L^{\prime}}d(v)\\
&\ge& \frac{n^2}{4}-(s+k)cn- 
16c^2(s+k)^2 \left(\frac{1}{2}-\frac{1}{8c(s+k)} \right)n\\
&=& \frac{n^2}{4}  - 8c^2(s+k)^2n +(s+k)cn \\ 
&>& \frac{(n-16c^2(s+k)^2)^2}{4}+(s+k-1)^2 
     \end{eqnarray*}
 for  sufficiently large $n$, where the second inequality is by \eqref{maxsize}. Hence by Theorem~\ref{thmHY}, $G-L^{\prime}$ contains $H_{s,t_1,\ldots ,t_k}$, which implies that $G$ contains $H_{s,t_1,\ldots ,t_k}$.
 So the assertion holds.
 \end{proof}

Now, we have proved that 
 $|W|=o(n)$ and $|L| = O(1)$ by Lemmas   \ref{Wupper} 
 and \ref{Lupper}, respectively. 
Next we will improve the bound on $W$ and 
actually show that 
$W$ is  a subset of $L$, so $|W|=O(1)$.  
To proceed, 
we first  need the following lemma which can be proved by induction or double counting.

\begin{lemma}\label{set}
Let $A_1,A_2, \ldots, A_p$ be $p$  finite sets. Then
\begin{equation*} 
\left| \, \bigcap_{i=1}^p A_i \, \right|\ge \sum_{i=1}^p|A_i|-(p-1)\left| \, \bigcup_{i=1}^pA_i \, \right|.
\end{equation*}
\end{lemma}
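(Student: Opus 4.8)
The plan is to prove this by double counting over the ground set $U:=\bigcup_{i=1}^p A_i$. For each $x\in U$, let $m(x):=|\{i: x\in A_i\}|$ denote the number of sets containing $x$; counting incidences in two ways gives $\sum_{i=1}^p |A_i| = \sum_{x\in U} m(x)$. Since every $x\in U$ lies in at least one and at most all $p$ of the sets, we have $1\le m(x)\le p$, and crucially $x\in\bigcap_{i=1}^p A_i$ holds precisely when $m(x)=p$.

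First I would rewrite the left-hand side of the target inequality as $\sum_{i=1}^p|A_i|-(p-1)|U| = \sum_{x\in U}\bigl(m(x)-(p-1)\bigr)$. The key observation is that each summand satisfies $m(x)-(p-1)\le 1$, with equality exactly when $x\in\bigcap_{i=1}^p A_i$, whereas for every $x\notin\bigcap_{i=1}^p A_i$ we have $m(x)\le p-1$ and hence $m(x)-(p-1)\le 0$. Therefore the sum is bounded above by the contribution of the intersection alone, that is by $|\bigcap_{i=1}^p A_i|$, which is exactly the claimed inequality.

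Alternatively, one could argue by complementation inside $U$: writing $\overline{A_i}:=U\setminus A_i$, the complement of the intersection is $U\setminus\bigcap_i A_i=\bigcup_i \overline{A_i}$, so subadditivity of cardinality yields $|U|-|\bigcap_i A_i|\le \sum_i(|U|-|A_i|)=p|U|-\sum_i|A_i|$, and rearranging gives the result. A third option is induction on $p$, with base case $p=2$ being the inclusion--exclusion identity $|A_1\cap A_2|=|A_1|+|A_2|-|A_1\cup A_2|$, and the inductive step applying this same identity to $B:=\bigcap_{i=1}^{p-1}A_i$ and $A_p$, together with the monotonicity $|\bigcup_{i=1}^{p-1}A_i|\le |\bigcup_{i=1}^{p}A_i|$.

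There is no substantial obstacle here; the statement is an elementary Bonferroni-type inequality. The only point requiring care is the direction of the estimates: in the double-counting argument one must discard the non-positive terms in the correct direction, and in the inductive argument one must enlarge $\bigcup_{i=1}^{p-1}A_i$ to the full union $U$ so that the factor $(p-2)$ is applied to the larger set, which is what keeps the inequality pointing the right way.
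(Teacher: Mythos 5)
Your proposal is correct, and it takes the same route as the paper: the paper gives no written-out proof of this lemma, remarking only that it ``can be proved by induction or double counting,'' which are precisely your first and third arguments (your double-counting bound $\sum_{x}\bigl(m(x)-(p-1)\bigr)\le \bigl|\bigcap_i A_i\bigr|$ and your induction via the two-set inclusion--exclusion identity are both carried out correctly, with the delicate monotonicity step handled in the right direction). Your second argument, complementation inside $U$ plus subadditivity, is an equally valid and arguably the cleanest variant, but all three are essentially the standard elementary proofs of this Bonferroni-type inequality.
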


\begin{lemma} \label{lem355}
Let $W$ and $L$ be sets of vertices defined in  
 (\ref{defW}) and (\ref{defL}). Then 
$W \subseteq L$. 
\end{lemma}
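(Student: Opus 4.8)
The plan is to argue by contradiction. I assume some $v\in W$ has $v\notin L$ and show that the neighborhood $N(v)$ then contains a path $P_{(s+k)c}$; this is impossible, because exactly as in the proof of Lemma~\ref{lemma31} it would produce $K_1\vee P_{(s+k)c}\supseteq H_{s,t_1,\ldots,t_k}$, contradicting that $G$ is $H_{s,t_1,\ldots,t_k}$-free. By symmetry I may take $v\in S$, so that $v\in W$ gives $d_S(v)\ge \delta n$, while $v\notin L$ gives $d(v)>(\tfrac12-c_0)n$. Write $A:=N(v)\cap S$ and $B:=N(v)\cap T$, so that $A\cup B=N(v)$ and every path inside $G[A\cup B]$ already lies in $N(v)$.

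The first key point is that both $A$ and $B$ are linear in $n$. Since the partition $V=S\cup T$ realizes a maximum cut (Lemma~\ref{maxcut}), moving $v$ to the opposite side cannot increase $e(S,T)$, whence $d_T(v)\ge d_S(v)$; combined with $|A|+|B|=d(v)>(\tfrac12-c_0)n$ this yields $|A|=d_S(v)\ge\delta n$ and $|B|=d_T(v)>(\tfrac14-\tfrac{c_0}{2})n$. The second, and crucial, point is that the bipartite graph between $A$ and $B$ is in fact dense, which I establish by counting its edges from the $A$-side. Using $|W|\le \tfrac{2\delta}{3}n+o(n)$ from Lemma~\ref{Wupper} and $|L|=O(1)$ from Lemma~\ref{Lupper}, together with $|A|\ge\delta n$, at least $\tfrac{\delta}{5}n$ vertices $a\in A$ satisfy $a\notin W\cup L$. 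For each such $a$ we have $d(a)>(\tfrac12-c_0)n$ and $d_S(a)<\delta n$, hence $d_T(a)>(\tfrac12-c_0-\delta)n$; since $|T|-|B|<(\tfrac14+\tfrac{c_0}{2}+\sqrt{\varepsilon})n$, this forces $|N(a)\cap B|\ge d_T(a)-(|T|-|B|)\ge \tfrac{n}{8}$ once $\varepsilon,\delta,c_0$ are small. Summing over these $\ge\tfrac{\delta}{5}n$ vertices gives $e(A,B)\ge \tfrac{\delta}{40}n^2$.

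Finally, $G[A\cup B]\subseteq G[N(v)]$ carries at least $\tfrac{\delta}{40}n^2$ edges on at most $n$ vertices, so for $n$ large this exceeds $\tfrac{((s+k)c-2)}{2}\,|A\cup B|$; by the Erd\H{o}s--Gallai bound (Lemma~\ref{lempath}), $G[N(v)]$ contains $P_{(s+k)c}$, which is the desired contradiction. Hence every $v\in W$ must lie in $L$, i.e.\ $W\subseteq L$.

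I expect the main obstacle to be the second and third steps above. A priori $A$ could be as small as $\delta n$ and almost entirely swallowed by $W$, so neither a naive alternating-path argument nor counting cut-edges from the $B$-side would work (a vertex of $B$ sits in the huge set $T$ and need not see the small set $A$ at all). What rescues the argument is the combination of the max-cut inequality $d_T(v)\ge d_S(v)$, which makes $B$ of size $\Omega(n)$, with counting $e(A,B)$ from the $A$-side, where each of the $\Omega(n)$ ``good'' vertices of $A$ is forced to send $\Omega(n)$ edges into the large set $B$; this is what upgrades $e(A,B)$ from merely linear to quadratic and lets the Erd\H{o}s--Gallai estimate bite.
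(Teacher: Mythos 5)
Your proof is correct, but it takes a genuinely different route from the paper's. The paper starts the same way (a vertex $u_0\in W\setminus L$ in $S$, the max-cut inequality $d_T(u_0)\ge\tfrac12 d(u_0)$, and the fact that vertices outside $W\cup L$ have $T$-degree at least $(\tfrac12-c_0-\delta)n$), but then argues locally: it claims $u_0$ has at most $s+k-1$ neighbors in $S\setminus(W\cup L)$, proving this by taking $(s+k)c$ vertices of $S\setminus(W\cup L)$ (including $s+k$ hypothetical neighbors of $u_0$), applying the intersection bound of Lemma~\ref{set} to find $(s+k)c$ common neighbors of all of them (and of $u_0$) in $T$, and explicitly embedding the flower $H_{s,t_1,\ldots,t_k}$ centered at $u_0$ inside the resulting complete bipartite graph; the final contradiction is $d_S(u_0)\le|W|+|L|+s+k-1<\delta n$, against $u_0\in W$. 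You instead go global: you lower-bound $e\bigl(N_S(v),N_T(v)\bigr)$ by $\Omega(\delta n^2)$ -- counting from the $S$-side over the $\Omega(\delta n)$ neighbors of $v$ outside $W\cup L$, each of which sends $\Omega(n)$ edges into the large set $N_T(v)$ -- and then invoke Erd\H{o}s--Gallai (Lemma~\ref{lempath}) to extract a path $P_{(s+k)c}$ inside $N(v)$, so that $K_1\vee P_{(s+k)c}\supseteq H_{s,t_1,\ldots,t_k}$ exactly as in Lemma~\ref{lemma31}, contradicting $H_{s,t_1,\ldots,t_k}$-freeness directly. Your quantitative steps check out (in particular $|N(a)\cap N_T(v)|\ge(\tfrac14-\tfrac{3c_0}{2}-\delta-\sqrt{\varepsilon})n\ge\tfrac n8$, since $c\ge5$ and $s+k\ge1$ force $c_0\le\tfrac1{40}$ -- note $c_0$ is fixed by the problem rather than a parameter one ``makes small,'' but it is automatically small enough). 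What your route buys is economy: it avoids Lemma~\ref{set} and the cycle-by-cycle embedding, recycling instead the machinery already set up for Lemma~\ref{lemma31}. What the paper's route buys is a reusable structural tool -- any $(s+k)c$ vertices of $S\setminus(W\cup L)$ have $(s+k)c$ common neighbors in $T$ -- which is precisely the engine used again in Lemma~\ref{lem336} to show $G[S\setminus L]$ is $K_{1,s+k}$-free and $M_{s+k}$-free; your path-in-the-neighborhood argument would not transfer there, since the center of a $K_{1,s+k}$ in $S\setminus L$ has only $s+k$ neighbors on its own side and no quadratic edge count is available.
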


\begin{proof}
 Suppose on the contrary that there exists a vertex $u_0 \in W$ and $u_0 \notin L$. 
 Recall that $W_1=W\cap S$ and $W_2=W\cap T$. Similarly, 
   let $L_1=L\cap S$ and $L_2=L\cap T$. Without loss of generality, we may assume that $u_0\in S$, 
   that is, $u_0\in W_1$ and $u_0\notin  L_1.$ Since  $S$ and $T$ form a maximum bipartite subgraph, 
   we have $d_T(u_0)\ge \frac{1}{2}d(u_0)$. Indeed, otherwise, 
   we can move the vertex $u_0$ into the part $T$, it will increase 
 strictly  the number of edges between $S$ and $T$. 
   On the other hand, 
   invoking the fact  $u_0 \not\in L$, we get $d(u_0)\ge 
   (\frac{1}{2}-\frac{1}{8c(s+k)} )n$. So
   \[ d_T(u_0)\ge \frac{1}{2}d(u_0)\ge \left(\frac{1}{4}-\frac{1}{16c(s+k)} \right)n.\]
Recall in Lemmas   \ref{Wupper} and \ref{Lupper} that 
\[  |W| <{\delta} n, \quad 
|L|\le 16c^2(s+k)^2.\]
  Hence, for fixed $\delta<\frac{1}{10(k+1)^2}$ and sufficiently large $n$, we have
\begin{equation} \label{eqlarge}
  |S\setminus (W\cup L)|\ge 
  \left(\frac{1}{2}- \sqrt{\varepsilon } \right)n- 
 \delta n - 
  16c^2(s+k)^2\ge  (s+k)c.
\end{equation}

{\bf Claim.} $u_0$ is adjacent to at most $s+k-1$ vertices in  $S\setminus (W\cup L)$.  

\medskip 

   Suppose that $u_0$ is adjacent to   $s+k$ vertices  $u_1, u_2, \ldots, u_{s+k}$ in $S\setminus (W\cup L)$.  Since $u_i\not\in L$,  we have $d(u_i)\ge (\frac{1}{2}-\frac{1}{8c(s+k)} )n$. On the other hand, 
   we have $d_S(u_i)\le \delta n$ because  $u_i\notin W$. 
   So $d_T(u_i)=d(u_i)-d_S(u_i)\ge 
  (\frac{1}{2}-\frac{1}{8c(s+k)}-\delta )n $. 
  In addition, 
  we can choose other vertices $u_{s+k+1}, \ldots ,u_{(s+k)c}$ in 
  the set $S \setminus (W \cup L)$. Similarly, 
  we also have $d_S(u_i) \ge 
  (\frac{1}{2} - \frac{1}{8c(s+k)} -\delta )n$ 
  for each $i\in [s+k+1,(s+k)c]$. 
   By Lemma~\ref{set}, we consider the cardinality of common neighbors 
   \begin{eqnarray*}
&& \left| N_T(u_0) \cap N_T(u_1) 
\cap  \cdots \cap N_T(u_{(s+k)c}) \right| \\ 
 &\ge&
    \sum_{i=0}^{(s+k)c} \left| N_T(u_i) \right| 
    - (s+k)c \left| \bigcup_{i=0}^{(s+k)c} N_T(u_i) \right|\\
 &\ge&  d_T(u_0)+ d_T(u_1)+\cdots +d_T(u_{(s+k)c}) - 
 (s+k)c|T| \\
& \ge &\left (\frac{1}{4}-\frac{1}{16c(s+k)}\right)n+ 
\left(\frac{1}{2}-\frac{1}{8c(s+k)}-\delta\right)n\cdot (s+k)c - 
(s+k)c \left(\frac{1}{2}+\sqrt{\varepsilon}\right)n  \\
 &= &\left(\frac{1}{8}- \frac{1}{16c(s+k)}- 
 (s+k)c\delta- (s+k)c \sqrt{\varepsilon}\right)n> (s+k)c
\end{eqnarray*}
for sufficiently large $n$, where the last inequality follows from 
the fact that $\delta $ and $\varepsilon$ are small enough, 
e.g., $\delta<\frac{1}{100c^2(s+k)^2}$ and $\varepsilon<\frac{\delta^2}{3}$. So there exist $(s+k)c$  vertices $v_1,v_2, \ldots v_{(s+k)c}$  in $T$ such that the induced  subgraph by two partitions $\{u_1, \ldots, u_{(s+k)c}\}$ and $\{v_1, \ldots, v_{(s+k)c}\}$ is complete bipartite. 
The subgraph of $G$ formed by the vertex $u_0$ 
together with such a complete bipartite graph 
can contain many disjoint odd-length cycles. 
For example, we can choose $u_0u_1v_1u_0$ to 
find a copy of triangle, 
and we can choose $u_0u_1v_1u_{s+k+1}v_2u_0$ 
to form a copy of pentagon and so on. 
Hence, it follows that $G$ contains $H_{s,t_1,\ldots ,t_k}$,
this is a contradiction.
Therefore $u_0$ is adjacent to at most $s+k-1$ vertices in  $S\setminus (W\cup L)$.

Hence, applying Lemmas   \ref{Wupper} and \ref{Lupper} again, 
we have
   \begin{eqnarray*}
   d_S(u_0)&\le & |W|+|L|+s+k-1\\
   &<&   \frac{2\delta }{3} n + 
 \frac{ 2(s+k-1)^2}{\delta n}  +16c^2(s+k)^2+ s+k-1\\ 
&<& \delta n
\end{eqnarray*}
for sufficiently large $n$.  This is a contradiction to the fact that $u_0\in W$.
Similarly, there  is no vertex  $u$ such that 
$u\in W_2$ and $u\notin L_2$.  
Hence $W\subseteq L$. 
 \end{proof}

 \begin{lemma} \label{lem336}
There exist  independent sets
 $I_S\subseteq S$  and $I_T\subseteq T$ such that
\[  |I_S|\ge  |S|-20c^2 (s+k)^2 \quad
 \mbox {and} \quad
  |I_T|\ge |T|-20c^2 (s+k)^2. \]
 \end{lemma}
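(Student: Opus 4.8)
The plan is to deduce the two large independent sets from a bound on the matching numbers of $G[S]$ and $G[T]$. Concretely, I would first show that $G[S\setminus L]$ contains no matching of size $s+k$, i.e. $\beta(G[S\setminus L])\le s+k-1$, and symmetrically for $T$. Granting this, fix a maximum matching $M$ in $G[S\setminus L]$ and let $I_S$ be the set of vertices of $S\setminus L$ left unsaturated by $M$. Since $M$ is maximum, no two unsaturated vertices can be adjacent (such an edge would enlarge $M$), so $I_S$ is independent in $G$. Using $|M|\le s+k-1$ together with $|L|\le 16c^2(s+k)^2$ from Lemma~\ref{Lupper}, we obtain
\[
|I_S|\ \ge\ |S|-|L|-2|M|\ \ge\ |S|-16c^2(s+k)^2-2(s+k-1)\ \ge\ |S|-20c^2(s+k)^2,
\]
where the final step only uses $c\ge 5$. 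The set $I_T$ is produced by the identical argument applied to $T$.

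It remains to establish the matching bound, which I would prove by contradiction. Suppose $G[S\setminus L]$ contains a matching $x_1y_1,\dots,x_{s+k}y_{s+k}$. Every endpoint lies in $S\setminus L$, hence is outside $L$ and, since $W\subseteq L$ by Lemma~\ref{lem355}, also outside $W$; therefore each endpoint $u$ satisfies $d(u)\ge(\tfrac12-c_0)n$ and $d_S(u)<\delta n$, so $d_T(u)=d(u)-d_S(u)\ge(\tfrac12-c_0-\delta)n$. Applying Lemma~\ref{set} to the $2(s+k)$ sets $N_T(x_i),N_T(y_i)\subseteq T$ and using $|T|\le(\tfrac12+\sqrt{\varepsilon})n$ from Lemma~\ref{maxcut}, their common intersection has size at least $\bigl(\tfrac12-\tfrac{1}{4c}-O(\delta+\sqrt{\varepsilon})\bigr)n$, which is $\Omega(n)$ because $c\ge 5$ and $\delta,\varepsilon$ are small. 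In particular there is a vertex $u_0\in T$ adjacent to all $2(s+k)$ endpoints, with a further linear-sized reservoir of such common neighbours to draw on.

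With $u_0$ as the common centre, I would assemble the forbidden flower petal by petal, keeping the petals internally disjoint. The $s$ triangles are immediate: for $i\le s$ the triple $u_0x_iy_i$ is a triangle, since $x_iy_i$ is a matching edge and $u_0$ is adjacent to both endpoints. For each $j\in\{1,\dots,k\}$ I build an odd cycle of the prescribed length $t_j$ through $u_0$, using the matching edge $x_{s+j}y_{s+j}$ as its unique within-$S$ edge and completing it by a path $y_{s+j},a_1,b_1,\dots,u_0$ that alternates between $T\setminus L$ and $S\setminus L$. This is possible greedily: any not-yet-used vertex off $L$ has $\Omega(n)$ neighbours on the opposite side, so a fresh admissible vertex exists at each step, and the closing step needs only a common neighbour of $u_0$ and the current path end, of which there are $\Omega(n)$. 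Since all petals together consume only $O((s+k)c)=O(1)$ vertices while every choice had $\Omega(n)$ options, pairwise disjointness (apart from $u_0$) is maintained. This yields a copy of $H_{s,t_1,\dots,t_k}$, contradicting the $H_{s,t_1,\dots,t_k}$-freeness of $G$ and forcing $\beta(G[S\setminus L])\le s+k-1$.

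The main obstacle is the simultaneous embedding in the last step: one must check that the intermediate vertices of the $k$ odd cycles can be chosen from $S\setminus L$ and $T\setminus L$ with the right adjacencies, that the exact lengths $t_j$ are realizable (using $t_j-1$ even together with the single within-$S$ edge to correct parity), and that the $s+k$ petals remain disjoint except at $u_0$. The quantitative inputs, namely the degree bound $d_T(u)\ge(\tfrac12-c_0-\delta)n$ for vertices off $W\cup L$ and the linear lower bound on the common neighbourhood from Lemma~\ref{set}, are exactly what provide the room; the bookkeeping mirrors the greedy embedding already carried out in Lemma~\ref{lem355}, now driven by matching edges rather than by a single high-$S$-degree vertex.
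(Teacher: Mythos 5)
Your proof is correct, but it reaches the independent sets by a leaner route than the paper's. The paper proves a stronger claim --- that $G[S\setminus L]$ is both $K_{1,s+k}$-free \emph{and} $M_{s+k}$-free --- then invokes the Chv\'atal--Hanson bound (Lemma~\ref{Chvatal76}) to get $e(G[S\setminus L])\le f(s+k-1,s+k-1)$ and deletes one endpoint of each remaining edge; you prove only the matching bound $\beta(G[S\setminus L])\le s+k-1$ and use the elementary fact that the vertices missed by a maximum matching form an independent set, giving $|I_S|\ge |S\setminus L|-2(s+k-1)$ directly. This bypasses Lemma~\ref{Chvatal76} and the star-freeness argument entirely and still clears the stated bound, since $|L|\le 16c^2(s+k)^2$ by Lemma~\ref{Lupper}. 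The embedding step is organized differently but uses the same inputs: the paper applies Lemma~\ref{set} to $(s+k)c$ arbitrary vertices of $S\setminus L$ to produce one complete bipartite subgraph into which all petals are placed at once, whereas you apply Lemma~\ref{set} only to the $2(s+k)$ matching endpoints and grow each odd cycle greedily, using that every vertex off $L$ (hence off $W$, by Lemma~\ref{lem355}) has $\Omega(n)$ neighbours across the partition; your parity accounting (one intra-$S$ edge per cycle, $t_j-3$ alternating internal vertices) is right, and the $O(1)$ total consumption versus $\Omega(n)$ choices keeps the petals disjoint. Two caveats. First, you should state explicitly that $u_0$ is chosen outside $L$ --- possible because the common neighbourhood has linear size while $|L|=O(1)$ --- since otherwise your claim that $u_0$ and the current path end have $\Omega(n)$ common neighbours has no degree lower bound for $u_0$ to lean on. Second, the paper's seemingly redundant stronger claim is not wasted: the $K_{1,s+k}$-freeness and the Chv\'atal--Hanson edge bound established inside this proof are reused later (in Lemmas~\ref{Lempty} and~\ref{STlambdarefine}), so under your version those facts would still have to be supplied at that point.
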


 \begin{proof}
 Since $S\setminus L$ is large enough 
 by reviewing (\ref{eqlarge}) 
 in the proof of Lemma \ref{lem355},  
 we next prove that there exists a large complete bipartite subgraph 
 between $S$ and $T$. 
 Let  $u_1, \ldots, u_{(s+k)c} $ be $(s+k)c$ vertices 
 chosen {\it arbitrarily} from $S \setminus L$. 
 Since $u_i\notin L$, we have 
$$
d(u_i)\ge \left(\frac{1}{2}-\frac{1}{8c(s+k)} \right)n.
$$
Note that $W\subseteq L$ by Lemma~\ref{lem355}, 
so $u_i \notin W$, then $d_S(u_i)\le \delta n$. Hence
\[ d_T(u_i)=d(u_i)-d_S(u_i)\ge
 \left(\frac{1}{2}-\frac{1}{8c(s+k)}-\delta\right)n. \] 
 Furthermore, by Lemma~\ref{set}, we  have
    \begin{eqnarray*}
\left |\bigcap_{i=1}^{(s+k)c}N_T(u_i) \right| 
&\ge &  \sum_{i=1}^{(s+k)c}|N_T(u_i)|-((s+k)c -1) \left|\bigcup_{i=1}^{(s+k)c}N_T(u_i) \right| \\
& \ge& \left (\frac{1}{2}-\frac{1}{8c(s+k)}-\delta \right)n\cdot 
(s+k)c -((s+k)c-1) \left(\frac{1}{2}+\sqrt{\varepsilon} \right)n \\
 &=&\left( \frac{3}{8} - (s+k)c\delta -((s+k)c-1)\sqrt{\varepsilon} \right)n 
 > (s+k)c
\end{eqnarray*}
 for  sufficiently  large $n$.
Hence for any $(s+k)c$ vertices $u_1,u_2, \ldots ,u_{(s+k)c}$ in 
$S\setminus L$, 
there exist $(s+k)c$ vertices $v_1,v_2, \ldots, v_{(s+k)c}\in T$ such that the subgraph formed by two partitions 
$\{u_1, \ldots, u_{(s+k)c}\}$ and 
$\{v_1,\ldots, v_{(s+k)c}\}$ is a complete bipartite graph.  

\medskip 
{\bf Claim.}  {\it $G[S\setminus L]$ is both $K_{1, s+k}$-free 
and $M_{s+k}$-free. }
\medskip

If $G[S\setminus L]$ contains a copy of 
$K_{1,s+k}$ centered at vertex $u_0$ with leaves $u_1,u_2,\ldots ,u_{s+k}$, then 
by the discussion above, there 
exist $u_{s+k+1},\ldots $, $u_{(s+k)c}$ $\in S\setminus (L\cup \{u_1,\ldots ,u_{s+k}\})$ and 
$v_1, \ldots ,v_{(s+k)c}\in T$ such that 
$\{u_1,\ldots ,u_{(s+k)c}\}$ and $\{v_1, \ldots ,v_{(s+k)c}\}$ 
form a complete bipartite graph. 
Therefore, we can see that 
$u_0u_1v_1$, $u_0u_2v_2$, $\ldots, u_0u_sv_s$ form 
$s$ triangles centered at $u_0$. Moreover, there exist 
$u_x,\ldots ,u_z \in S\setminus (L\cup \{u_1,\ldots ,u_{s+k}\})$ and 
$v_y,\ldots ,v_w\in T \setminus \{v_1,\ldots ,v_{s+k}\}$ such that 
  $u_0u_{s+1}v_{s+1} u_{x} v_{y}\cdots u_zv_w 
   u_0$ forms an odd cycle
and in fact we can find all other odd cycles similarly. 
Since there are $(s+k)c$ such $u_i$ and $(s+k)c$ such $v_i$, 
it is enough to form $s$ triangles and $k$ odd cycles 
of lengths no more than $c$. 
Hence there is a copy of $H_{s,t_1,\ldots ,t_k}$ centered at $u_0$. 
Therefore, $G[S\setminus L]$ is $K_{1,s+k}$-free. 
Now, 
we assume that
 $\{u_1u_2,u_3u_4, \ldots ,u_{2(s+k)-1}u_{2(s+k)}\}$ is
  a matching of size $s+k$ in $S\setminus L$. 
  Then $v_1u_1u_2, v_1u_3u_4, \ldots ,v_1u_{2s-1}u_{2s}$ 
  form $s$ triangles centered at vertex $v_1$, and there exist 
  distinct vertices $v_i ,\ldots ,v_x \in T \setminus \{v_1\}$ 
    and $u_j, \ldots ,u_y\in S\setminus (L\cup \{u_1,\ldots ,u_{2s}\})$
 such that $v_1u_{2s+1}u_{2s+2} v_i u_j \cdots v_x u_y v_{1}$ 
  forms an odd cycle and so on. 
  So $G[S\setminus L]$ is $M_{s+k}$-free.  
  \begin{center} 
\includegraphics[scale=0.085]{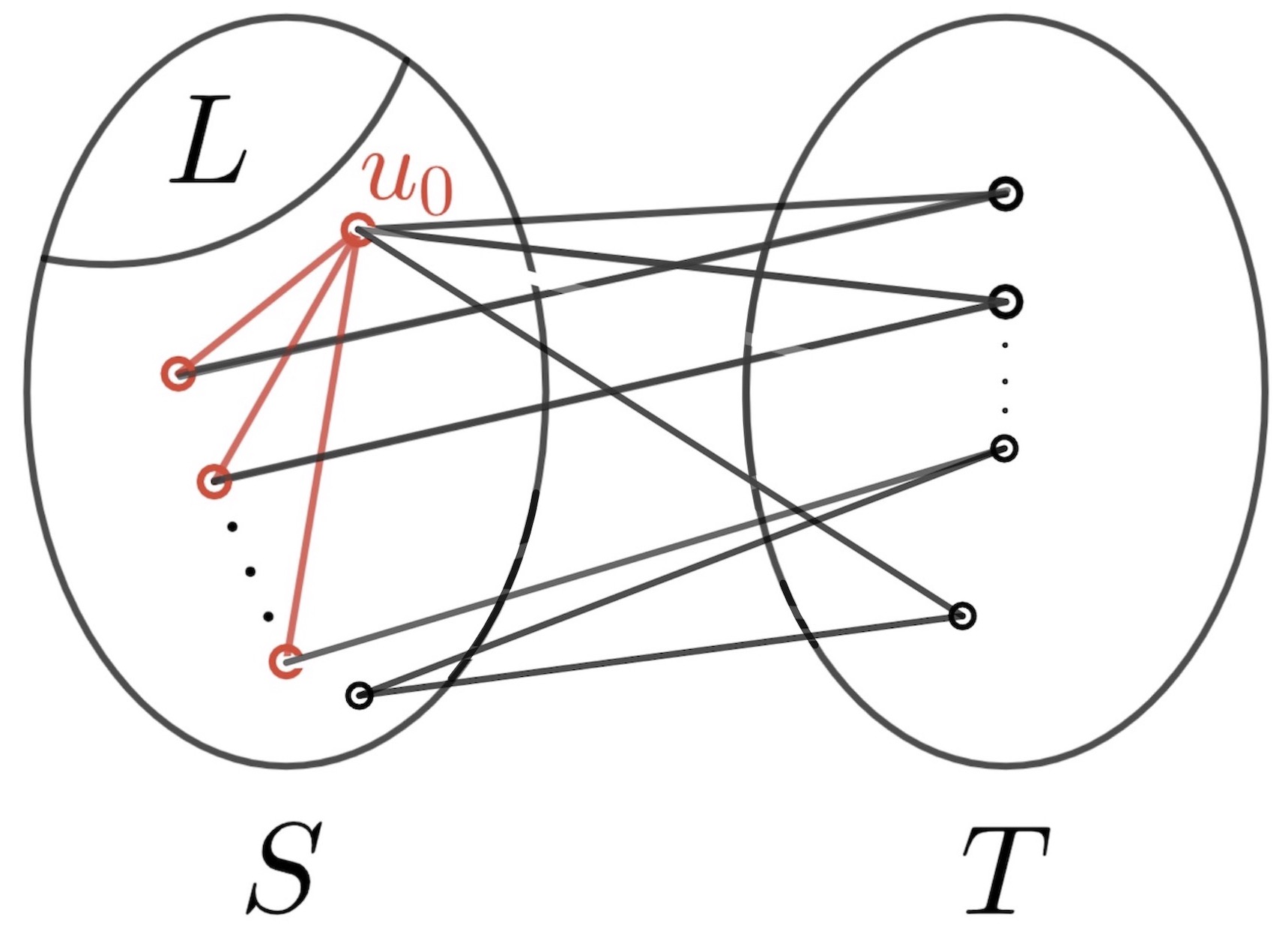}  \quad 
\includegraphics[scale=0.23]{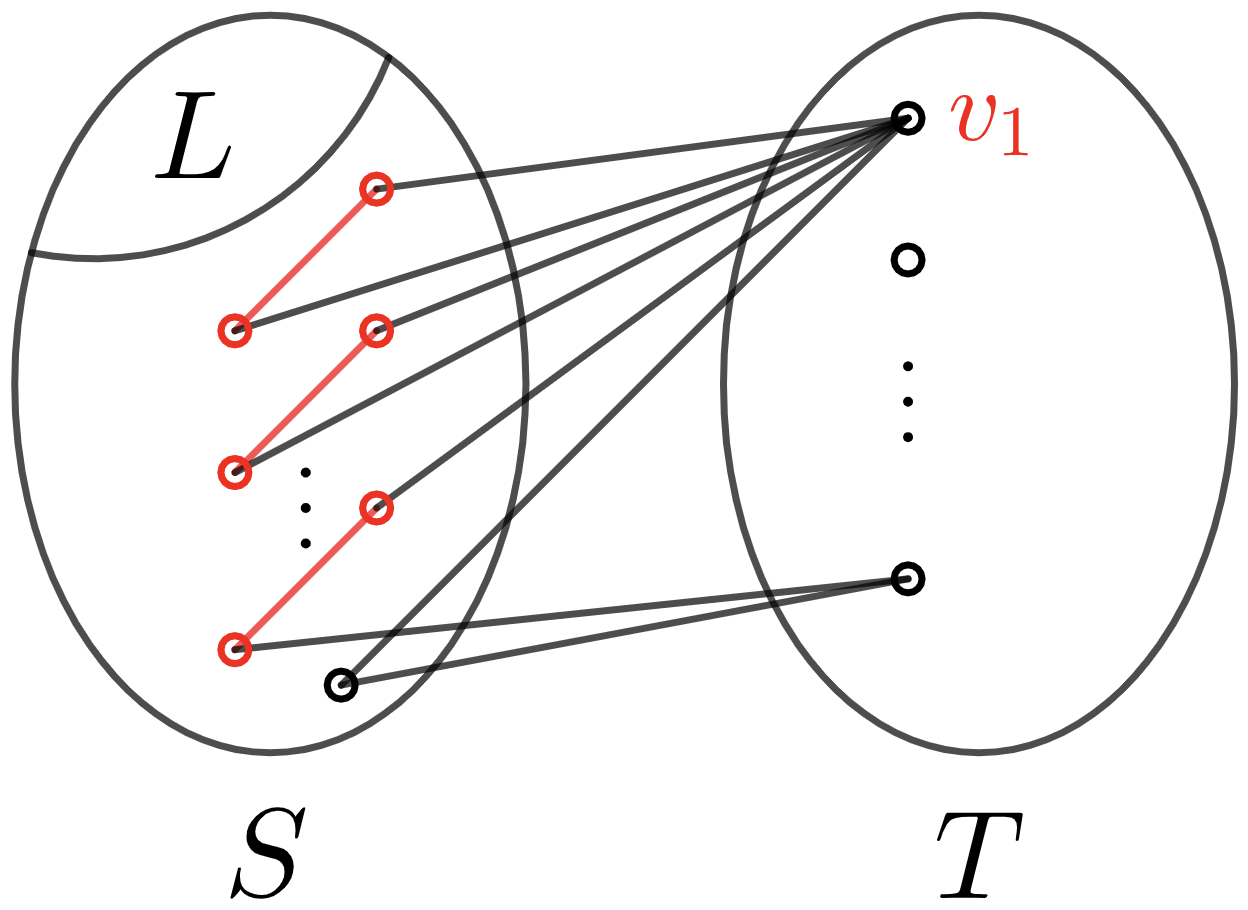}  
\end{center} 
Hence both the maximum degree and the maximum matching number of $G[S\setminus L]$  are at most  $s+k-1$, respectively. By Lemma  \ref{Chvatal76}, 
$$e(G[S\setminus L])\le f(s+k-1, s+k-1).$$
 The same argument gives
 $$e(G[T\setminus L])\le f(s+k-1, s+k-1).$$
 Since $G[S \setminus L]$ has at most $f(s+k-1, s+k-1)$ edges, then  the subgraph obtained from $G[S \setminus L]$ by deleting one vertex of each edge in $G[S\setminus L]$ contains no edges, which is an independent set of $G[S\setminus L]$. 
 By Lemma \ref{Lupper}, there exists an independent set
 $I_S\subseteq S$ such that
  \begin{align*}
  |I_S| &\ge |S\setminus L|-f(s+k-1, s+k-1) \\
  & \ge |S|- 
  16c^2(s+k)^2- (s+k)^2 \ge |S|-20c^2(s+k)^2. 
  \end{align*}
 The same argument gives that there  is an independent set $I_T\subseteq T$ with
 $$ |I_T|\ge |T|- 20c^2 (s+k)^2.
 $$
This completes the proof.
 \end{proof}

In Lemma \ref{lem336}, we have showed that
  there are two large independent sets with 
  $(\frac{1}{2} -o(1))n$ vertices, 
one in  $S$ and the other in $T$. Invoking this fact, we next shall prove that 
  $L$ is actually an empty set.

\begin{lemma} \label{Lempty}
$L$ is empty, and both $G[S]$ and $G[T]$ are $K_{1,s+k}$-free and $M_{s+k}$-free.
\end{lemma}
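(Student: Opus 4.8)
The plan is to prove $L=\emptyset$ by contradiction, leveraging the two large independent sets $I_S,I_T$ produced in Lemma~\ref{lem336} together with the eigenvector structure. Suppose $L\neq\emptyset$ and pick some $v\in L$. The key idea is that the Perron eigenvector $\mathbf{x}$ gives us the identity $\lambda(G)\mathbf{x}_v=\sum_{u\sim v}\mathbf{x}_u$, so a vertex of small degree has a strong pull on the spectral radius. First I would record that $\lambda(G)>n/2$ by \eqref{first lower bound1}, while a vertex $v\in L$ has $d(v)\le(\tfrac12-c_0)n$, so even with every neighbor contributing the maximal entry $1$ we would get $\lambda(G)\mathbf{x}_v\le d(v)\le(\tfrac12-c_0)n$, forcing $\mathbf{x}_v<1-2c_0+o(1)$, i.e.\ the eigenvector entry at any low-degree vertex is bounded away from $1$. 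I would quantify this precisely to obtain a fixed gap $\mathbf{x}_v\le 1-c_1$ for some constant $c_1>0$.

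Next I would exploit the independent sets to build a better graph and contradict maximality of $\lambda(G)$. The natural move is to take a vertex $v\in L$ and delete all its incident edges, then re-join $v$ to \emph{all} of $I_T$ (if $v$ would sit in $S$) or all of $I_S$; since $I_T$ is independent and has size $(\tfrac12-o(1))n$, adding a vertex completely joined to one independent side cannot by itself create the flower $H_{s,t_1,\ldots,t_k}$, because the neighborhood of $v$ is now an independent set and hence contains no path $P_{(s+k)c}$ — exactly the obstruction identified in Lemma~\ref{lemma31}. I would verify $H_{s,t_1,\ldots,t_k}$-freeness of the modified graph $G'$ carefully: the only new edges are between $v$ and an independent set, so any odd cycle through $v$ must use two such new edges and then travel inside $G-v$, and one checks the remaining budget cannot assemble $s$ triangles and $k$ long odd cycles sharing the vertex $v$.

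The spectral comparison is the crux. Using the Rayleigh quotient, the change in the quadratic form is
\[
\mathbf{x}^T\bigl(A(G')-A(G)\bigr)\mathbf{x}
=2\mathbf{x}_v\Bigl(\sum_{u\in I_T}\mathbf{x}_u-\sum_{w\sim_G v}\mathbf{x}_w\Bigr).
\]
Here $\sum_{u\in I_T}\mathbf{x}_u$ is close to $|I_T|\approx n/2$ (since most entries are near $1$, anticipating Lemma~\ref{second lower bound evector2}, or at least a definite positive fraction from the large high-degree core), while $\sum_{w\sim_G v}\mathbf{x}_w=\lambda(G)\mathbf{x}_v$ with $\mathbf{x}_v$ bounded below away from $0$ yet $d(v)\le(\tfrac12-c_0)n$ making this sum strictly smaller. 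I would show the bracket is a positive quantity of order $\Omega(n)\cdot\mathbf{x}_v$, so $\lambda(G')>\lambda(G)$, contradicting that $G$ maximizes the spectral radius among $H_{s,t_1,\ldots,t_k}$-free graphs. This forces $L=\emptyset$.

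Once $L=\emptyset$, the second conclusion is immediate: in Lemma~\ref{lem336} we proved $G[S\setminus L]$ and $G[T\setminus L]$ are $K_{1,s+k}$-free and $M_{s+k}$-free, and with $L=\emptyset$ these are exactly $G[S]$ and $G[T]$. The main obstacle I anticipate is making the eigenvector bound on $I_T$ rigorous \emph{before} the stronger estimate $\mathbf{x}_u=1-o(1)$ is available: at this stage I only know most vertices have large degree, so I would lower-bound $\sum_{u\in I_T}\mathbf{x}_u$ by restricting to the subset of $I_T$ lying outside $W$, where high degree into the opposite independent side forces eigenvector entries bounded below by a constant via $\lambda(G)\mathbf{x}_u=\sum_{w\sim u}\mathbf{x}_w$ and the normalization $\mathbf{x}_z=1$. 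Balancing these constants against $c_0,\delta,\varepsilon$ so that the bracket is provably positive is the delicate bookkeeping step.
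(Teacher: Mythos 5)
Your overall strategy coincides with the paper's: pick $v\in L$, delete its incident edges, rejoin $v$ to the large independent set $I_T$ from Lemma~\ref{lem336}, and contradict spectral maximality via the Rayleigh quotient; and you correctly note that once $L=\emptyset$, the second assertion is exactly the Claim inside the proof of Lemma~\ref{lem336}. However, the step you yourself flag as delicate --- lower-bounding $\sum_{u\in I_T}\mathbf{x}_u$ before Lemma~\ref{second lower bound evector2} is available --- is a genuine gap, and the repair you propose would fail. You want to deduce constant lower bounds on \emph{individual} entries $\mathbf{x}_u$, $u\in I_T\setminus W$, from the fact that such vertices have large degree. There is no mechanism for that: the eigenequation $\lambda_1\mathbf{x}_u=\sum_{w\sim u}\mathbf{x}_w$ only transfers lower bounds on the neighbors' entries to $u$, and at this stage the only entry bounded below is $\mathbf{x}_z=1$; iterating the equation (i.e., counting walks from $u$ to $z$) gives $\mathbf{x}_u\ge (A^m)_{uz}\mathbf{x}_z/\lambda_1^m$ with $(A^m)_{uz}\le \Delta(G)^{m-1}$, so the resulting bound is $O(1/n)$ for every fixed $m$ --- degree information alone cannot produce pointwise constant bounds here. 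The paper's key idea, which your sketch is missing, is to bound the \emph{sum} directly through the eigenequation at $z$: since $d(z)\ge\lambda_1>n/2$, we have $z\notin L$, say $z\in S$; then $d_S(z)\le |L|+(s+k-1)\le 20c^2(s+k)^2$ because $G[S\setminus L]$ is $K_{1,s+k}$-free and $|L|\le 16c^2(s+k)^2$, and since $|T\setminus I_T|\le 20c^2(s+k)^2$ and all entries are at most $1$,
\[
\frac{n}{2}<\lambda_1=\sum_{w\sim z}\mathbf{x}_w\le d_S(z)+|T\setminus I_T|+\sum_{u\in I_T}\mathbf{x}_u,
\]
which yields $\sum_{u\in I_T}\mathbf{x}_u\ge \frac{n}{2}-40c^2(s+k)^2$. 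With this inequality in hand, your Rayleigh computation closes exactly as in the paper, since $\sum_{w\sim_G v}\mathbf{x}_w\le d(v)\le(\frac12-c_0)n$.

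A second, smaller problem is your freeness check for the modified graph. The obstruction from Lemma~\ref{lemma31} runs in the wrong direction: a long path inside a neighborhood is \emph{sufficient} to create $H_{s,t_1,\ldots,t_k}$, not necessary, because the cycles of a flower centered at $v$ need not lie inside $N(v)$. Independence of the new neighborhood $I_T$ kills triangles through $v$, but odd cycles of length at least $5$ through $v$ can certainly appear (e.g.\ $v,u_1,a,b,u_2,v$ with $u_1,u_2\in I_T$ and one edge $ab$ inside $S$). Moreover, your ``budget'' argument only addresses copies of the flower centered at $v$; a copy centered at some $w\ne v$, in which $v$ is an interior vertex of a single odd cycle, must also be excluded. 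Both cases can be repaired by a replacement argument: substitute for $v$ (or for the subpath $u_1vu_2$) a common neighbor in $S$ of the relevant $I_T$-vertices avoiding the rest of the copy --- such a vertex exists by Lemma~\ref{set}, because $I_T\subseteq T\setminus L$ and $W\subseteq L$ (Lemma~\ref{lem355}) give $d_S(u_i)\ge(\frac12-c_0-\delta)n$ --- and this produces a copy of $H_{s,t_1,\ldots,t_k}$ already in $G$, a contradiction. The paper's own justification of this point is also terse, but it is this replacement idea, not the path obstruction of Lemma~\ref{lemma31}, that makes the freeness claim correct.
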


\begin{proof} 
 Recall that $A \mathbf{x} =\lambda_1 \mathbf{x}$ and 
 $z$ is defined as a vertex with maximum eigenvector entry and satisfies $\mathbf{x}_z=1$. So we have 
 $$d(z)\ge \sum_{w\sim z}\mathbf{x}_w=\lambda_1 \mathbf{x}_z=\lambda_1\ge   \frac{n}{2}.$$
 Hence $z\notin L$. 
  Without loss of generality, we may assume that $z\in S$.
Since the maximum degree in the induced subgraph $G[S\setminus L]$  is at most $s+k-1$ (containing no $K_{1,s+k}$), from Lemma~\ref{Lupper}, we have $|L|\le 16c^2(s+k)^2$ and
 $$
 d_S(z)=d_{S\cap L}(z)+d_{S\setminus L}(z)\le 
16c^2(s+k)^2 + s+k-1 \le 20c^2 (s+k)^2.
 $$
 Therefore,  by Lemma \ref{lem336}, we have
 \begin{eqnarray*}
 \lambda_1
 &=&\lambda_1\mathbf{x}_z=\sum_{v\sim z}\mathbf{x}_v 
 = \sum_{v\sim z, v\in S}\mathbf{x}_v+\sum_{v\sim z , v\in T}\mathbf{x}_v\\
 &=& \sum_{v\sim z, v\in S}\mathbf{x}_v+  \sum_{v\sim z, v\in I_T}\mathbf{x}_v+\sum_{v\sim z , v\in T\setminus I_T}\mathbf{x}_v\\
 &\le&  d_S(z)+\sum_{ v\in I_T}\mathbf{x}_v+\sum_{ v\in T\setminus I_T}1\\
 &\le & 20c^2 (s+k)^2 +\sum_{ v\in I_T}\mathbf{x}_v+|T|-|I_T|\\
&\le & \sum_{ v\in I_T}\mathbf{x}_v  +40 c^2(s+k)^2.
\end{eqnarray*}
 Combining (\ref{first lower bound1}), we can get 
 \begin{equation}\label{Lempty2}
 \sum_{ v\in I_T}\mathbf{x}_v\ge 
\frac{n}{2}-40 c^2(s+k)^2.
 \end{equation}

Next we are going to prove   $L=\varnothing$.

By way of contradiction, suppose that there is a vertex  $v\in L\cap S$, 
so  $d_G(v)\le (\frac{1}{2}-\frac{1}{8c(s+k)})n$.
Consider the graph $G^+$ with vertex set $V(G)$ and edge set 
$E(G^+) = (E(G) \setminus \{ vu: u\in N_G(v)\}) \cup \{vw: w\in I_T\}$. 
In this process, the number of added edges is larger than 
that of deleted edges. 
Note that $I_T$ is an independent set and then 
adding edges connecting $v$ and vertices in $I_T$ does not create any triangles or odd cycles.
So the new graph $G^+$ is still $H_{s,t_1,\ldots ,t_k}$-free. 
Note that $\mathbf{x}$ is a vector such that 
$\lambda (G)=\frac{\mathbf{x}^TA(G)\mathbf{x}}{ 
\mathbf{x}^T\mathbf{x}}$, and the Rayleigh theorem implies 
$\lambda (G^+) \ge \frac{\mathbf{x}^TA(G^+)\mathbf{x}}{ 
\mathbf{x}^T\mathbf{x}}$. 
Furthermore, 
\begin{align*}
\lambda(G^+) - \lambda(G) &\geq \frac{\mathbf{x}^T\left(A(G^+) - A(G)\right) \mathbf{x}}{\mathbf{x^T}\mathbf{x}} = \frac{2\mathbf{x}_v}{\mathbf{x}^T\mathbf{x}}\left( \sum_{w\in I_T} \mathbf{x}_w - \sum_{uv\in E(G)} \mathbf{x}_u\right) \\
& \overset{(\ref{Lempty2})}{\geq} 
\frac{2 \mathbf{x}_v}{\mathbf{x}^T\mathbf{x}} 
\left( \frac{n}{2} - 40c^2(s+k)^2 - d_G(v)\right)\\
&\geq \frac{2 \mathbf{x}_v}{\mathbf{x}^T\mathbf{x}} \left(\frac{n}{2} - 40c^2(s+k)^2 -(\frac{1}{2}-\frac{1}{8c(s+k)} )n\right) \\
&=\frac{2 \mathbf{x}_v}{\mathbf{x}^T\mathbf{x}} \left( \frac{n}{8c(s+k)}-40c^2(s+k)^2\right)> 0,
\end{align*}
where the last inequality holds for $n$ large enough 
and  $\mathbf{x}_v>0$, which follows from 
the Perron--Frobenius theorem and the fact that 
$G$ is connected. This contradicts $G$ has the largest spectral radius over all $H_{s,t_1,\ldots ,t_k}$-free graphs, so $L$ must be empty. 
Furthermore, 
the claim in the proof of Lemma \ref{lem336} 
implies that both $G[S]$ and $G[T]$ are $K_{1,s+k}$-free and $M_{s+k}$-free. 
\end{proof}

Let $G$ be an $H_{s,t_1,\ldots ,t_k}$-free graph on $n$ vertices 
with maximum spectral 
radius. 
In the previous lemmas, 
we have proved that $G$ contains at most 
$O(n^2)$ triangles and 
has $\frac{n^2}{4}- O(n)$ edges. In addition, 
 $G$ contains a bipartite subgraph with parts 
$S$ and $T$ such that 
$\frac{n}{2}-o(n) \le  |S|,|T| \le \frac{n}{2} +o(n)$. 
Next we shall refine the structure of $G$. 
We shall show that the number of triangles in 
$G$ is at most $O(n)$ and 
the number of edges in $G$ is at least 
$\frac{n^2}{4} - O(1)$, and the two vertex parts  $S,T$ 
satisfies 
$\frac{n}{2} -O(1)\le |S| ,|T| \le \frac{n}{2} + O(1)$. 
More precisely, we state these results as in the following lemma.

\begin{lemma}\label{STlambdarefine}
For $n$ and $k$ defined as before, we have
\begin{equation*}
\label{STrefine2}
e(G)\ge \frac{n^2}{4}-12(s+k)^2,
\end{equation*}
\begin{equation*} \label{est}
e(S,T)
\ge \frac{n^2}{4} -14(s+k)^2. 
\end{equation*}
\begin{equation*}
\label{STrefine1}
\frac{n}{2}-4(s+k)
\le |S|,  |T|\le \frac{n}{2}+4(s+k),
\end{equation*}
and
\begin{equation*}
\label{lambdarefine}
\frac{n}{2}-14(s+k)^2\le \delta(G)\le \lambda (G)\le \Delta (G)\le \frac{n}{2}+5(s+k).
\end{equation*}
\end{lemma}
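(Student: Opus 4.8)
The plan is to bootstrap the $o(n)$-scale estimates of Lemmas~\ref{maxcut} and \ref{Lempty} up to the $O(1)$-scale bounds asserted here; the engine is the structural input already in hand, namely that $G[S]$ and $G[T]$ are both $K_{1,s+k}$-free and $M_{s+k}$-free. Feeding this into the Chv\'atal--Hanson estimate (Lemma~\ref{Chvatal76}) with $\beta=\Delta=s+k-1$ yields
\[
e(S),\ e(T)\le f(s+k-1,s+k-1)\le (s+k-1)(s+k)<(s+k)^2,
\]
so $e(S)+e(T)<2(s+k)^2$ is bounded by a constant independent of $n$. This one fact is what turns every estimate below from $O(n)$ into $O(1)$.

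The crux is a sharp triangle count. Every triangle is an odd cycle, so two of its three vertices lie on the same side of $S\cup T$; hence each triangle contains at least one edge inside $S$ or inside $T$, and since a fixed edge lies in at most $n$ triangles we get $t\le\bigl(e(S)+e(T)\bigr)n<2(s+k)^2\,n$. This upgrades the earlier $t=O(n^2)$ to $t=O(n)$, which is decisive. Combining it with Lemma~\ref{lemlb} and the bound $\lambda(G)>n/2$ from \eqref{first lower bound1} gives
\[
e(G)\ge \lambda(G)^2-\frac{3t}{\lambda(G)}\ge \frac{n^2}{4}-\frac{6(s+k)^2 n}{n/2}=\frac{n^2}{4}-12(s+k)^2,
\]
and then $e(S,T)=e(G)-e(S)-e(T)\ge \frac{n^2}{4}-14(s+k)^2$ is immediate.

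From the cut estimate the balance of the partition drops out: writing $x=\tfrac{|T|-|S|}{2}$ so that $|S||T|=\tfrac{n^2}{4}-x^2$, the inequality $e(S,T)\le |S||T|$ forces $x^2\le 14(s+k)^2$, hence $\tfrac n2-4(s+k)\le |S|,|T|\le \tfrac n2+4(s+k)$ because $\sqrt{14}<4$. The maximum degree is then easy: any $v\in S$ has $d_S(v)\le s+k-1$ (as $G[S]$ is $K_{1,s+k}$-free) and $d_T(v)\le|T|$, so $d(v)<\tfrac n2+5(s+k)$, and symmetrically on $T$, giving $\Delta(G)<\tfrac n2+5(s+k)$. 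For the minimum degree I would run a deficiency argument: summing over $S$,
\[
\sum_{v\in S}\bigl(|T|-d_T(v)\bigr)=|S||T|-e(S,T)\le 14(s+k)^2-x^2,
\]
so every $v\in S$ satisfies $d_T(v)\ge |T|-14(s+k)^2+x^2\ge \tfrac n2-14(s+k)^2$, and the symmetric computation on $T$ gives $d_S(v)\ge \tfrac n2-14(s+k)^2$ for $v\in T$; thus $\delta(G)\ge \tfrac n2-14(s+k)^2$. The chain $\delta(G)\le\lambda(G)\le\Delta(G)$ is just the standard average-degree sandwich.

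The step I expect to be the main obstacle is the triangle count, since it is precisely where the constant-size bound $e(S)+e(T)=O(1)$ is cashed in: without the monochromatic-edge observation the triangle total is only controlled at the $O(n^2)$ level and Lemma~\ref{lemlb} returns only $e(G)\ge\tfrac{n^2}{4}-O(n)$, far too weak for what follows. A secondary subtlety is keeping the $+x^2$ term in the deficiency step for $\delta(G)$: it is exactly this term (together with integrality of degrees) that absorbs the $O(s+k)$ loss coming from the possible imbalance $|S|\ne|T|$ and delivers the clean constant $14(s+k)^2$.
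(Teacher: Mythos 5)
Your proof is correct, and up through the maximum-degree bound it is essentially the paper's own argument: the Chv\'atal--Hanson bound $e(S)+e(T)<2(s+k)^2$, the observation that every triangle uses an edge of $E(S)\cup E(T)$ (hence $t\le 2(s+k)^2 n$), Lemma~\ref{lemlb} together with $\lambda(G)>n/2$, and then the cut and balance estimates --- your identity $|S||T|=\frac{n^2}{4}-x^2$ with $\sqrt{14}<4$ is just a compact rewriting of the paper's contradiction against $(\frac{n}{2}-4(s+k))(\frac{n}{2}+4(s+k))=\frac{n^2}{4}-16(s+k)^2$. The one place you genuinely diverge is the minimum-degree bound. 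The paper argues by deletion: if some vertex $v$ had $d(v)<\frac{n}{2}-14(s+k)^2$, then $e(G-v)\ge \frac{n^2}{4}-12(s+k)^2-(\frac{n}{2}-14(s+k)^2)>\frac{(n-1)^2}{4}+(s+k-1)^2=\mathrm{ex}(n-1,H_{s,t_1,\ldots,t_k})$, so $G-v$, and hence $G$, would contain $H_{s,t_1,\ldots,t_k}$ by Theorem~\ref{thmHY}. You instead run a deficiency count, $\sum_{v\in S}\bigl(|T|-d_T(v)\bigr)=|S||T|-e(S,T)\le 14(s+k)^2-x^2$, and bound each (nonnegative) summand by the total. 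Both routes are valid. Yours is more elementary, using only counting already on the table rather than a second appeal to the Tur\'an-number theorem, but it requires exactly the care you flagged: when $|x|=\tfrac12$ the correction $x+x^2$ equals $-\tfrac14$, so integrality of degrees must be invoked to land on the clean constant $14(s+k)^2$; the paper's deletion argument sidesteps all parity bookkeeping at the cost of one more use of Theorem~\ref{thmHY}, which it needs elsewhere anyway.
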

\begin{proof}
 From Lemma \ref{Lempty},  both $G[S]$ and $G[T]$ are 
 $K_{1,s+k}$-free and $M_{s+k}$-free. 
 By Lemma \ref{Chvatal76}, so we have $e(S) + e(T) \leq 2f(s+k-1,s+k-1) < 2(s+k)^2$.
   This means that the number of triangles in $G$ is bounded above by $2(s+k)^2n$ since any triangle contains 
   an  edge of  $E(S)\cup E(T)$.
       By Lemma \ref{lemlb},
       we have $$e(G) \geq \lambda_1^2-\frac{6t}{n}
       \ge \frac{n^2}{4}  -{12(s+k)^2}.$$
Since $e(S) +e(T) \le 2(s+k)^2$, then we have 
\[  e(S,T)= e(G) -e(S) -e(T) 
\ge \frac{n^2}{4} -14(s+k)^2. \] 
Suppose that $|S|\le \frac{n}{2}-4(s+k),$ 
then $|T|=n-|S|\ge \frac{n}{2}+4(s+k)$.
       Hence
\[ e(S,T)\le |S||T|\le  \left (\frac{n}{2}-4(s+k) \right)
 \left(\frac{n}{2}+4(s+k) \right)= \frac{n^2}{4}-16(s+k)^2,\] 
        which contradicts to $e(S,T)\geq  \frac{n^2}{4}-14(s+k)^2$. 
        So we have
        $$
        \frac{n}{2}-4(s+k)
\le |S|, |T|\le \frac{n}{2}+4(s+k).
$$
Moreover, by Lemma \ref{Lempty}, 
the maximum degree of $G[S]$ and
 $G[L]$   is at most $s+k-1$, which  yields 
 $$\Delta(G)\le (\frac{n}{2}+4(s+k))+(s+k-1)< \frac{n}{2}+5(s+k).$$
  So $$\lambda_1\le \Delta(G)<  \frac{n}{2}+5(s+k).$$
Furthermore, we claim that the minimum degree of $G$ is at least $\frac{n}{2} - 14(s+k)^2$.  Otherwise,
 removing a vertex  $v$ of minimum  degree $d(v)$, we have
\begin{eqnarray*}
 e(G-v)&=&e(G)-d(v)\\
 &\ge & \frac{n^2}{4}-12(s+k)^2- \left( \frac{n}{2}-14(s+k)^2 \right)\\
 &= & \frac{n^2}{4}-\frac{n}{2}+2(s+k)^2 \\
 &> & \frac{(n-1)^2}{4}+(s+k-1)^2,
\end{eqnarray*}
which implies the induced subgraph $G-v$ contains 
a copy of $H_{s,t_1,\ldots ,t_k}$ by Theorem  \ref{thmHY}.
\end{proof}

\begin{lemma}\label{second lower bound evector2}
For all $u\in V(G)$,  we have that $\mathbf{x}_u\geq 1-\frac{120 (s+k)^2}{n}$.
\end{lemma}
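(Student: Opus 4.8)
The plan is to exploit the fact, already established in Lemmas~\ref{Lempty} and~\ref{STlambdarefine}, that $G$ is extremely close to a balanced complete bipartite graph, and to compare eigenvector entries of pairs of vertices via their common neighbourhoods. The key structural estimate I would record first is that \emph{every vertex is adjacent to all but $O((s+k)^2)$ vertices of the opposite part}. Indeed, for $u\in S$ we have $d_S(u)\le s+k-1$ since $G[S]$ is $K_{1,s+k}$-free by Lemma~\ref{Lempty}, while $d(u)\ge \delta(G)\ge \frac{n}{2}-14(s+k)^2$ and $|T|\le \frac{n}{2}+4(s+k)$ by Lemma~\ref{STlambdarefine}; hence $u$ fails to be adjacent to at most $|T|-(d(u)-d_S(u))\le 19(s+k)^2$ vertices of $T$, and symmetrically for $u\in T$. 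Consequently any two vertices lying on the \emph{same} side share all but at most $40(s+k)^2$ of the opposite part as common neighbours, so their neighbourhoods differ in only $O((s+k)^2)$ places.

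The second step compares an arbitrary $u$ with the maximum-entry vertex $z$ when they lie on the same side. Subtracting the two eigenequations in $A\mathbf{x}=\lambda_1\mathbf{x}$ and using $0\le \mathbf{x}_w\le 1$ gives
\[
\lambda_1(\mathbf{x}_z-\mathbf{x}_u)=\sum_{w\in N(z)\setminus N(u)}\mathbf{x}_w-\sum_{w\in N(u)\setminus N(z)}\mathbf{x}_w\le |N(z)\setminus N(u)|.
\]
By the common-neighbour estimate, $|N(z)\setminus N(u)|=d(z)-|N(z)\cap N(u)|\le \Delta(G)-\bigl(\tfrac{n}{2}-40(s+k)^2\bigr)\le 45(s+k)^2$, and since $\lambda_1>\tfrac{n}{2}$ by (\ref{first lower bound1}) this yields $\mathbf{x}_u\ge 1-90(s+k)^2/n$ for every $u$ on the side containing $z$.

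The remaining step, the opposite side, is the least automatic one and I expect it to be the main obstacle. Here $z$ and $u$ share almost no neighbours, so the direct comparison is useless; instead I would first \emph{bootstrap} a single high-entry vertex on the far side. Assuming $z\in S$, the eigenequation at $z$ gives $\lambda_1=\sum_{v\sim z}\mathbf{x}_v\le |T|\,M_T+(s+k-1)$, where $M_T=\max_{v\in T}\mathbf{x}_v$ is attained at some $z'\in T$, whence $M_T\ge 1-10(s+k)/n$. Since $z'$ and any $u\in T$ again share all but $O((s+k)^2)$ of $S$, repeating the subtraction argument yields $\mathbf{x}_u\ge \mathbf{x}_{z'}-45(s+k)^2/\lambda_1\ge 1-100(s+k)^2/n$. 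Combining the two sides gives the claimed bound $\mathbf{x}_u\ge 1-120(s+k)^2/n$ after tracking the constants. The difficulty is precisely this asymmetry: because only $z$ is known a priori to have entry $1$, the far part must be reached through the extra bootstrapping estimate rather than by a single comparison, and one must keep the accumulated error strictly below the target constant $120(s+k)^2$.
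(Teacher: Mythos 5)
Your proposal is correct, and its first half coincides with the paper's own argument: for $u$ on the same side as $z$ you subtract the two eigenequations and bound $|N(z)\setminus N(u)|$ by $O((s+k)^2)$ via the common-neighbourhood estimate coming from Lemmas \ref{Lempty} and \ref{STlambdarefine}; this is exactly Step 1 of the paper's proof. Where you genuinely diverge is the far side. The paper handles $u\in T$ in one stroke: since $G[T]$ is $K_{1,s+k}$-free, $u$ has $d_S(u)\ge \frac{n}{2}-15(s+k)^2$ neighbours in $S$, each of which already has entry at least $1-\frac{78(s+k)^2}{n}$ by Step 1, so $\lambda_1\mathbf{x}_u\ge \bigl(1-\frac{78(s+k)^2}{n}\bigr)d_S(u)$, and dividing by $\lambda_1\le\Delta(G)\le \frac{n}{2}+5(s+k)$ gives the claim. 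You instead bootstrap: the eigenequation at $z$ forces some $z'\in T$ with $\mathbf{x}_{z'}\ge 1-\frac{10(s+k)}{n}$, and then you rerun the same-side subtraction argument inside $T$ between $u$ and $z'$. Both mechanisms are sound and both land under the target constant (your arithmetic has minor slack: redone carefully, the step-3 difference bound is about $49(s+k)^2$ rather than $45(s+k)^2$, which still gives roughly $1-\frac{108(s+k)^2}{n}$). The trade-off is that the paper's route is one step shorter but propagates error multiplicatively through the quotient $\bigl(1-a\bigr)\bigl(\frac{n}{2}-b\bigr)/\bigl(\frac{n}{2}+c\bigr)$, whereas yours keeps every estimate additive, uses only $0<\mathbf{x}_w\le 1$ and $\lambda_1>\frac{n}{2}$ in each comparison, and treats the two sides symmetrically once $M_T$ has been bootstrapped.
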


\begin{proof} Without loss of generality, we may  assume that $z\in S$.  We consider the following two cases.

{\bf Step 1.}   We first consider the case  $u\in S$. 
Since  $G[S]$ is $K_{1,s+k}$-free, 
then   $d_S(u)\le s+k-1 $. 
By  Lemma \ref{STlambdarefine}, we have  
\begin{eqnarray*}
|N_T(u)|&=&d_T(u)= d(u)-d_S(u)\ge \delta(G)-d_S(u) \\
&\ge & \frac{n}{2}-14(s+k)^2-(s+k-1) \\
&\ge  &\frac{n}{2}-15(s+k)^2.
\end{eqnarray*}
Similarly, we also have $|N_T(z)| \ge \frac{n}{2}-15(s+k)^2$. 
Then 
\begin{eqnarray*}
|N_T(u)\cap N_T(z)| &=& |N_T(u)|+|N_T(z)|-|N_T(u)\cup N_T(z)| \\
&\ge& 2 \left(\frac{n}{2}-15 (s+k)^2 \right)- 
\left(\frac{n}{2}+4(s+k) \right) \\ 
&\ge & \frac{n}{2}-34 (s+k)^2.
\end{eqnarray*} 
Note that $d_T(z)\le |T|$. 
By Lemma \ref{STlambdarefine} again, we can get 
\[ d_T(z)-|N_T(u)\cap N_T(z)| \le \frac{n}{2} + 4(s+k) - 
(\frac{n}{2} - 34(s+k)^2) \le 38 (s+k)^2.\] 
Hence, we have 
\begin{eqnarray*}
\lambda_1\mathbf{x}_u-\lambda_1\mathbf{x}_z 
&=& \sum_{v\sim u} \mathbf{x}_v  - \sum_{v\sim z} \mathbf{x}_z \\
&=& \sum_{v\sim u, v\in T, v\not\sim z}\mathbf{x}_v+\sum_{v\sim u, v\in S}\mathbf{x}_v 
 -\sum_{v\sim z, v\in T, v\not\sim u}\mathbf{x}_v-\sum_{v\sim z, v\in S}\mathbf{x}_v\\
&\ge & -\sum_{v\sim z, v\in T, v\not\sim u}\mathbf{x}_v-\sum_{v\sim z, v\in S}\mathbf{x}_v\\
&\ge & -\sum_{v\sim z, v\in T,  v\not\sim u}1-\sum_{v\sim z, v\in S}1\\
&\ge & -\Bigl( d_T(z)-|N_T(u)\cap N_T(z)| \Bigr)- d_S(z)\\
&\ge & - 38(s+k)^2- (s+k)^2\\
&= & -39 (s+k)^2.
\end{eqnarray*}
Recall that $\mathbf{x}_z=1$. Therefore,  for any $u\in S$, we have
\begin{equation}\label{verct1}
\mathbf{x}_u\ge 1-\frac{39(s+k)^2}{\lambda_1}> 1-\frac{39(s+k)^2}{{n}/{2}}=1-\frac{78(s+k)^2}{n}.
\end{equation}

{\bf Step 2.}   
Now we consider the case $u\in T$. By (\ref{verct1}), we get 
 $$\lambda_1\mathbf{x}_u=\sum_{v\sim u}\mathbf{x}_v\ge \sum_{v\sim u, v\in S}\mathbf{x}_v\ge \left(1-\frac{78(s+k)^2}{n} \right)d_S(u).$$ 
By  Lemma \ref{STlambdarefine}, we can see that 
$d(u)\ge \delta (G)\ge \frac{n}{2} -14 (s+k)^2$. 
Recall that $G[T]$  is $K_{1,s+k}$-free, 
so we have $d_T(u)\le s+k-1$. 
 Then   
 \[ d_S(u)=d(u)- d_T(u)\ge 
  \frac{n}{2}-15(s+k)^2. \]
 Hence
 \begin{eqnarray*}
 \mathbf{x}_u&\ge & \frac{(1-\frac{78(s+k)^2}{n})d_S(u)}{\lambda_1} \ge \frac{(1-\frac{78 (s+k)^2}{n})(\frac{n}{2}-15(s+k)^2)}{
 \frac{n}{2}+5(s+k)} \\
& =& \frac{\frac{n}{2}-54(s+k)^2+\frac{1170(s+k)^4}{n}}{\frac{n}{2}+5(s+k)} \\
 &>&1-\frac{120(s+k)^2}{n}.
 \end{eqnarray*}

 From the above two cases, the result follows.
\end{proof}

Using this refined bound on the eigenvector entries, we will show that the partition $V=S\cup T$ is balanced (Lemma \ref{cut balanced}). 
First of all, we fix some notation for convenience. 
Let $B=K_{s, t}$ be the complete bipartite graph with partite sets $S$ and $T$, and let $G_1 = G[S] \cup G[T]$ and $G_2$ be the graph on $V(G)$ with the missing edges between $S$ and $T$, that is,  $E(G_2)=E(B) \setminus E(G)$. Note that $e(G) = 
e(G_1) + e(B)  - e(G_2)$.

From Lemma \ref{Lempty}, we know that 
both $G[S]$ and $G[T]$ are $K_{1,s+k}$-free and $M_{s+k}$-free, 
then $e(G_1)=e(S) +e(T) \le 2f(s+k-1,s+k-1) \le 2(s+k)^2 $. 
Next we shall give an improvement in the sense that 
$e(G_2)$ is close to zero. 

\begin{lemma}  \label{lemma311}
Let $G_1,G_2$ and $B$ be graphs as defined  above. Then 
\[ e(G_1) - e(G_2) 
\leq (s+k-1)^2. \] 
\end{lemma}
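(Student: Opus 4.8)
The plan is to bound $e(G_1) - e(G_2)$ by comparing $G$ against a specific modified graph and exploiting the maximality of the spectral radius. Recall that $e(G) = e(G_1) + e(B) - e(G_2)$, where $B = K_{|S|,|T|}$. The key idea is that if $e(G_2)$ were large relative to $e(G_1)$, we could produce an $H_{s,t_1,\ldots,t_k}$-free graph with strictly larger spectral radius, contradicting the choice of $G$. I would construct a competitor graph $G'$ on the same vertex set: start from the complete bipartite graph $B$ on parts $S,T$ (which is triangle-free, hence trivially $H_{s,t_1,\ldots,t_k}$-free) and then re-embed into one side a graph realizing $f(s+k-1,s+k-1) = (s+k-1)^2$ edges with both maximum degree and matching number at most $s+k-1$ (the Chv\'atal--Hanson/Abbott--Hanson--Sauer extremal configuration from Lemma~\ref{Chvatal76}). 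By Theorem~\ref{thmHY} such a graph is $H_{s,t_1,\ldots,t_k}$-free, and it has exactly $e(B) + (s+k-1)^2$ edges. This is precisely the extremal construction in $\mathcal{F}_{n,s,k}$.

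First I would set up the Rayleigh-quotient comparison using the Perron eigenvector $\mathbf{x}$ of $G$ normalized so its maximum entry is $1$. Since $\lambda(G')$ is at least the Rayleigh quotient $\mathbf{x}^T A(G')\mathbf{x}/(\mathbf{x}^T\mathbf{x})$, and $\lambda(G) = \mathbf{x}^T A(G)\mathbf{x}/(\mathbf{x}^T\mathbf{x})$, maximality forces
\[
\mathbf{x}^T A(G)\,\mathbf{x} \;\ge\; \mathbf{x}^T A(G')\,\mathbf{x}.
\]
Writing $A(G') - A(G)$ as a signed sum over the edges toggled (edges of $B$ present in $G'$ but absent in $G$ — these are exactly the missing edges counted by $e(G_2)$ — minus the re-embedded side-edges whose total is at most $e(G_1)$), the quadratic form difference becomes $2\sum_{\text{added } uv} \mathbf{x}_u\mathbf{x}_v - 2\sum_{\text{deleted } uv} \mathbf{x}_u\mathbf{x}_v$. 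The crucial input now is Lemma~\ref{second lower bound evector2}: every entry satisfies $\mathbf{x}_u \ge 1 - \tfrac{120(s+k)^2}{n}$, so each $\mathbf{x}_u\mathbf{x}_v$ is $1 - o(1)$ and in particular bounded below by a quantity approaching $1$. Hence each added edge contributes essentially $+2$ and each deleted edge at most $-2$ to the quadratic form, giving roughly
\[
0 \;\ge\; \mathbf{x}^T\bigl(A(G') - A(G)\bigr)\mathbf{x}
\;\ge\; 2\bigl(1 - o(1)\bigr)\,e(G_2) - 2\bigl((s+k-1)^2\bigr),
\]
after observing that $G'$ adds back all $e(G_2)$ missing cross-edges while using only $(s+k-1)^2$ interior edges. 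Rearranging yields $e(G_2) \le (s+k-1)^2 + o(1)$, which since both sides are integers sharpens to the desired $e(G_1) - e(G_2) \le (s+k-1)^2$ once I feed in $e(G_1) \le (s+k-1)^2$ as well — actually the cleaner route is to observe directly that the net edge gain of $G'$ over $G$ is $e(G_2) - e(G_1) + (s+k-1)^2 \ge 0$ wedded to the eigenvector estimate, delivering the bound on $e(G_1)-e(G_2)$.

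The main obstacle will be controlling the error terms tightly enough that an integer inequality falls out, rather than merely $e(G_1)-e(G_2) \le (s+k-1)^2 + o(1)$. Because $e(G_2)$ can be as large as $\Theta(n)$ a priori, I must ensure the $o(1)$ slack multiplying $e(G_2)$ — namely $\tfrac{120(s+k)^2}{n}\cdot e(G_2)$ — stays below $1$; this is where I would either invoke the $O(1)$-type refinements already established (Lemma~\ref{STlambdarefine} bounds $e(S)+e(T)$ and hence, indirectly, controls how many cross-edges are missing near high-entry vertices) or argue that the re-embedding can be chosen to overlap the existing interior edges of $G$, so that the truly toggled set is small. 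A careful bookkeeping of which edges are added versus deleted, combined with $\lambda(G) > n/2$ to bound $\mathbf{x}^T\mathbf{x} \le n$, should close the gap; I expect the delicate part is precisely this integrality-versus-error-term endgame rather than the structural setup.
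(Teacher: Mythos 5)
Your spectral comparison runs in the wrong direction, and this is fatal to the whole plan. With $G' = B \cup Q$ (all cross edges plus an embedded graph $Q$ inside one side), maximality gives $\lambda(G) \ge \lambda(G') \ge \mathbf{x}^T A(G')\mathbf{x}/(\mathbf{x}^T\mathbf{x})$, hence $\mathbf{x}^T\bigl(A(G)-A(G')\bigr)\mathbf{x} \ge 0$, i.e.
\[
\mathbf{x}^T A(G_1)\mathbf{x} \;\ge\; \mathbf{x}^T A(G_2)\mathbf{x} + \mathbf{x}^T A(Q)\mathbf{x}.
\]
Feeding in $1-o(1) \le \mathbf{x}_u \le 1$ then yields $e(G_1) - e(G_2) \ge e(Q) - o(1)$, a \emph{lower} bound on $e(G_1)-e(G_2)$, not the upper bound the lemma asserts; your displayed inequality gets the opposite sign only because you silently replaced the deleted-edge total $e(G_1)$ by $(s+k-1)^2$, a bound that is not available (all that is known here is $e(G_1)\le 2f(s+k-1,s+k-1)\le 2(s+k)^2$, and $f(s+k-1,s+k-1)$ itself exceeds $(s+k-1)^2$ in general, e.g. $f(2,2)=6$). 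This is not an error-term or integrality issue that bookkeeping can repair: spectral maximality can only certify that $G$ is at least as good as a competitor; it can never cap $e(G_1)-e(G_2)$. The cap must come from the $H_{s,t_1,\ldots ,t_k}$-freeness of $G$ itself, i.e. a Tur\'an-type count, and the sole obstruction to applying Theorem~\ref{thmHY} directly is that $(S,T)$ may be unbalanced — balance is proved only later (Lemma~\ref{cut balanced}) \emph{using} this lemma, so your ``cleaner route'' (that the net edge gain of $G'$ over $G$ is nonnegative) is exactly the statement being proved and is circular for unbalanced partitions. The paper's actual proof is purely combinatorial: assuming $|T|\ge|S|$, it takes $C \subseteq T' = \{v\in T : N(v)\subseteq S\}$ with $|C| = |T|-|S|$ (possible since $e(G[T])=O(1)$ forces $|T'|\ge |T|-2(s+k)^2$ while $|T|-|S|\le 8(s+k)$ by Lemma~\ref{STlambdarefine}); then $G\setminus C$ is $H_{s,t_1,\ldots ,t_k}$-free on $2|S|$ vertices, so Theorem~\ref{thmHY} gives $e(G)-e(C,S) \le |S|^2 + (s+k-1)^2$, whence $e(G) \le |S|^2 + |C||S| + (s+k-1)^2 = |S||T| + (s+k-1)^2 = e(B) + (s+k-1)^2$, which is the lemma.

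A further structural error in your setup: the competitor you build is not legitimate. Embedding the Chv\'atal--Hanson extremal configuration (max degree and matching number at most $s+k-1$) into one side of $T_2(n)$ does \emph{not} produce an $H_{s,t_1,\ldots ,t_k}$-free graph, and Theorem~\ref{thmHY} does not say it does. For instance, two disjoint triangles (the extremal graph for $f(2,2)=6$) placed in one side of $T_2(n)$ already create $H_{1,5,5}$: a long odd cycle through the center vertex can pick up its single required interior edge anywhere along the cycle, not only at the center, which is precisely why the Tur\'an extremal graphs for intersecting odd cycles embed $K_{s+k-1,s+k-1}$ rather than the Chv\'atal--Hanson graphs. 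So even the inequality $\lambda(G)\ge\lambda(G')$ that your argument starts from would fail for your choice of $G'$; you would need to take $G'\in\mathcal{F}_{n,s,k}$ — and then the direction problem above still remains.
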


\begin{proof}
Without loss of generality, we may assume that $|T| \geq |S|$ 
and denote 
\begin{align*}
S' &:= \{v\in S: N(v) \subseteq T\},\\
T' &:= \{v\in T: N(v) \subseteq S\}.
\end{align*}
Since $e(G[S]) \le f(s+k-1,s+k-1)\le (s+k)^2$ by Lemma \ref{Lempty}, 
there exist at most $2(s+k)^2$ vertices in $S$   having a neighbor in $S$. Hence
$$
|S'|\ge |S|-2(s+k)^2.
$$
Similarly,
$$|T'|\ge |T|-2(s+k)^2.
$$
 Let $C\subseteq T'$ be a set having $|T|-|S|$ vertices, which 
is well-defined, as we can see from Lemma \ref{STlambdarefine} that  
$|T|-|S|\le 8(s+k)$ and 
$|T'|\ge |T|-2(s+k)^2\geq  \frac{n}{2} -4(s+k) -2(s+k)^2>8(s+k)$.
 Then $G\setminus C$ is a graph on $2|S|$ vertices such that
$$e(G)-e(C,S)=e(G\setminus C)\le \mathrm{ex} (2|S|, H_{s,t_1,\ldots ,t_k})\le\frac{(2|S|)^2}{4}+(s+k-1)^2.$$
Hence
$$e(G)\le |S|^2+|C||S|+(s+k-1)^2=|S||T|+(s+k-1)^2.$$ 
Note that $e(G_1) - e(G_2) =e(G)-e(B)$. 
This completes the proof. 
\end{proof}

\begin{lemma} \label{lemma312}
\[  \frac{2}{n} \left \lfloor\frac{n^2}{4} \right \rfloor 
 - \sqrt{|S||T|}   \le \frac{7200 (s+k)^4}{n(n-240 (s+k)^2)}. \]
\end{lemma}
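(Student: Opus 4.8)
The plan is to sandwich $\lambda_1^2=\lambda(G)^2$ between a lower bound coming from the Rayleigh estimate (\ref{first lower bound1}) and an upper bound reflecting that $G$ differs from the complete bipartite graph $B$ with parts $S$ and $T$ by only $O(1)$ edges, and then to exploit the fact that both bounds carry the \emph{same} additive constant $2(s+k-1)^2$. Write $\mathbf{x}(S)=\sum_{u\in S}\mathbf{x}_u$ and $\mathbf{x}(T)=\sum_{v\in T}\mathbf{x}_v$, both positive by the Perron--Frobenius theorem. Summing the eigen-equation $\lambda_1\mathbf{x}_u=\sum_{v\sim u}\mathbf{x}_v$ separately over $u\in S$ and over $v\in T$, and in each vertex rewriting its cross-side contribution as the full side-sum minus the weights on its missing cross-edges plus the weights on its internal edges, I obtain the exact identities
\[ \lambda_1\,\mathbf{x}(S)=|S|\,\mathbf{x}(T)+\epsilon_S,\qquad \lambda_1\,\mathbf{x}(T)=|T|\,\mathbf{x}(S)+\epsilon_T, \]
whose error terms combine neatly into
\[ \epsilon_S+\epsilon_T=\sum_{uv\in E(G_1)}(\mathbf{x}_u+\mathbf{x}_v)-\sum_{uv\in E(G_2)}(\mathbf{x}_u+\mathbf{x}_v). \]
Substituting each identity into the other and adding produces the clean exact formula $\bigl(\lambda_1^2-|S||T|\bigr)\,\mathbf{x}(V)=(|T|+\lambda_1)\epsilon_S+(|S|+\lambda_1)\epsilon_T$, where $\mathbf{x}(V)=\mathbf{x}(S)+\mathbf{x}(T)$.

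For the upper bound I would estimate $\epsilon_S+\epsilon_T$. Using $\mathbf{x}_u\le 1$ on $E(G_1)$ and $\mathbf{x}_u+\mathbf{x}_v\ge 2\bigl(1-\tfrac{120(s+k)^2}{n}\bigr)$ on $E(G_2)$ from Lemma~\ref{second lower bound evector2}, together with $e(G_2)=|S||T|-e(S,T)\le 14(s+k)^2$ (from $|S||T|\le n^2/4$ and Lemma~\ref{STlambdarefine}) and $e(G_1)-e(G_2)\le (s+k-1)^2$ (Lemma~\ref{lemma311}), one gets $\epsilon_S+\epsilon_T\le 2(s+k-1)^2+O\!\bigl((s+k)^4/n\bigr)$. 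Since the coefficients $|T|+\lambda_1,|S|+\lambda_1$ are $n+O(s+k)$ by Lemma~\ref{STlambdarefine} (the resulting coefficient asymmetry contributes only a lower-order $O((s+k)^3)$ term) and $\mathbf{x}(V)\ge n-120(s+k)^2$ by Lemma~\ref{second lower bound evector2}, dividing yields $\lambda_1^2-|S||T|\le 2(s+k-1)^2+O\!\bigl((s+k)^4/n\bigr)$.

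For the lower bound, squaring (\ref{first lower bound1}) and using $4\lfloor n^2/4\rfloor\ge n^2-1$ gives $\lambda_1^2\ge\bigl(\tfrac{2}{n}\lfloor n^2/4\rfloor\bigr)^2+2(s+k-1)^2-O\!\bigl((s+k)^2/n^2\bigr)$. Subtracting the two estimates, the crucial point is that the terms $2(s+k-1)^2$ cancel, leaving $\bigl(\tfrac{2}{n}\lfloor n^2/4\rfloor\bigr)^2-|S||T|\le O\!\bigl((s+k)^4/n\bigr)$. Writing the left-hand side of the lemma as a difference of squares divided by $\tfrac{2}{n}\lfloor n^2/4\rfloor+\sqrt{|S||T|}$ and bounding this denominator below by $n-240(s+k)^2$ (each summand is at least $\tfrac{n}{2}-120(s+k)^2$ for $n$ large) then gives exactly the stated inequality once the constants are tracked.

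The hard part will be the second step: forcing the error in $\lambda_1^2-|S||T|$ down to order $(s+k)^4/n$ rather than the naive $(s+k)^2$. This is precisely where the sharp eigenvector bound of Lemma~\ref{second lower bound evector2} is \emph{indispensable}, since it controls $\epsilon_S+\epsilon_T$ beyond its leading term, and where Lemma~\ref{lemma311} must be invoked to pin $e(G_1)-e(G_2)$ to $(s+k-1)^2$ so that it matches the extremal-number correction $(s+k-1)^2$ appearing in (\ref{first lower bound1}). Without this matching cancellation one would only reach a bound of order $1/n$ for the quantity on the left, which is too weak for the balancedness argument that follows.
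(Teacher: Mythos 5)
Your proposal is correct, and it reaches the stated bound, but by a genuinely different algebraic mechanism than the paper, even though both proofs consume the same ingredients: the decomposition of $G$ relative to $B=K_{|S|,|T|}$ into $G_1$ and $G_2$, the bounds $e(G_1)\le 2(s+k)^2$ and $e(G_2)\le 14(s+k)^2$, Lemma \ref{lemma311} for $e(G_1)-e(G_2)\le(s+k-1)^2$, the eigenvector bound of Lemma \ref{second lower bound evector2}, and, crucially, the cancellation of the $2(s+k-1)^2$ term against the lower bound (\ref{first lower bound1}). The paper works at first order in $\lambda$: it writes $\lambda(G)$ as the Rayleigh quotient of $A(B)+A(G_1)-A(G_2)$, applies the operator bound $\mathbf{x}^T A(B)\mathbf{x}\le\lambda(B)\,\mathbf{x}^T\mathbf{x}$ with $\lambda(B)=\sqrt{|S||T|}$ together with $\mathbf{x}^T A(G_1)\mathbf{x}\le 2e(G_1)$ and $\mathbf{x}^T A(G_2)\mathbf{x}\ge 2e(G_2)\bigl(1-\tfrac{240(s+k)^2}{n}\bigr)$, and rearranges; no squaring ever occurs. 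You never use that $\sqrt{|S||T|}$ is an eigenvalue of anything: your row-sum identity $(\lambda_1^2-|S||T|)\,\mathbf{x}(V)=(|T|+\lambda_1)\epsilon_S+(|S|+\lambda_1)\epsilon_T$ is exact (I verified it, as well as your formula for $\epsilon_S+\epsilon_T$), and you convert back to the first-order statement only at the end via difference of squares. Your route costs extra bookkeeping that the paper avoids --- the asymmetry term $(|T|-|S|)\epsilon_S$, which is indeed $O((s+k)^3)$ since $\bigl||T|-|S|\bigr|\le 8(s+k)$ and $|\epsilon_S|\le 2e(G[S])+e(G_2)=O((s+k)^2)$, the squared form of (\ref{first lower bound1}), and the lower bound $n-240(s+k)^2$ on the de-squaring denominator --- but it has the merit that everything before the final estimation is an identity, so the provenance of every error term is transparent. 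Tracking your constants gives a bound of roughly $3800(s+k)^4$ divided by $(n-120(s+k)^2)(n-240(s+k)^2)$, which is below the stated $7200(s+k)^4/\bigl(n(n-240(s+k)^2)\bigr)$ once $n$ exceeds about $250(s+k)^2$, so the lemma follows in the paper's regime of sufficiently large $n$. Your closing observation is also on point: without the precise constant of Lemma \ref{lemma311} matching the one in (\ref{first lower bound1}), the left-hand side could only be bounded by $O((s+k)^2/n)$, which is useless against the $1/n$ threshold in the balancedness argument of Lemma \ref{cut balanced}.
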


\begin{proof} 
By Lemma \ref{second lower bound evector2} we have,
\begin{equation} \label{eqxx}
\mathbf{x}^T\mathbf{x} \geq n \left(1-\frac{120(s+k)^2}{n} \right)^2 
> n \left(1-\frac{240(s+k)^2}{n} \right)=n-240(s+k)^2,
\end{equation}
 and that $\lambda (B) = \sqrt{|S||T|}$.  By Lemma~\ref{STlambdarefine}, we know that 
 $e(G_1) \leq 2(s+k)^2$, we obtain
 $$e(S, T)=e(G)-e(G_1)\ge \frac{n^2}{4}- 
 12(s+k)^2-2(s+k)^2=\frac{n^2}{4}-14(s+k)^2,$$
 which implies that
\[ e(G_2)=e(B)-e(S,T)\le |S||T|- \left(\frac{n^2}{4}-14(s+k)^2 \right)\le 14(s+k)^2. \] 
Applying Lemma \ref{second lower bound evector2} again, 
   we can obtain 
  \begin{align*}
  \mathbf{x}^T A(G_2)\mathbf{x} 
  & =2\sum_{uv\in E(G_2)} \mathbf{x}_u \mathbf{x}_v\geq 2e(G_2)
   \left(1-\frac{120 (s+k)^2}{n} \right)^2  \\ 
&   \geq 2e(G_2)\left(1-\frac{240 (s+k)^2}{n}\right). 
   \end{align*}  
Combining this result together with  (\ref{first lower bound1}) 
and Lemma \ref{lemma311}, we can get 
\begin{align*}
\frac{2}{n} \left \lfloor\frac{n^2}{4} \right \rfloor
 + \frac{ 2 (s+k-1)^2}{n}
 &\overset{(\ref{first lower bound1})}{\leq} 
 \lambda (G)
 = \frac{\mathbf{x}^T (A(B) + A(G_1) - A(G_2))\mathbf{x}}{\mathbf{x}^T\mathbf{x}} \\
  &=  \frac{\mathbf{x}^T A(B)\mathbf{x}}{\mathbf{x}^T\mathbf{x}}+ \frac{\mathbf{x}^T A(G_1)\mathbf{x}}{\mathbf{x}^T\mathbf{x}}-\frac{\mathbf{x}^T A(G_2)\mathbf{x}}{\mathbf{x}^T\mathbf{x}} \\
  &\le \lambda(B) + \frac{2e(G_1)}{\mathbf{x}^T\mathbf{x}}  -\frac{2e(G_2)(1-\frac{240 (s+k)^2}{n})}{\mathbf{x}^T\mathbf{x}}\\
   & \leq \lambda(B) + \frac{2(e(G_1) -e(G_2))}{\mathbf{x}^T\mathbf{x}}+ \frac{2e(G_2)\frac{240(s+k)^2}{n}}{\mathbf{x}^T\mathbf{x}}\\
  & 
  \overset{\text{Lemma \ref{lemma311}}}{\leq} 
   \sqrt{|S||T|} + \frac{2 (s+k-1)^2}{\mathbf{x}^T\mathbf{x}} + \frac{{2\cdot 14(s+k)^2}\frac{240(s+k)^2}{n}} {\mathbf{x}^T\mathbf{x}}.
    \end{align*}
 Then we have 
 \begin{align*}  
 \frac{2}{n} \left \lfloor\frac{n^2}{4} \right \rfloor 
  - \sqrt{|S||T|}  
 & \leq   
 2(s+k-1)^2  \left(\frac{1}{\mathbf{x}^T\mathbf{x}}-\frac{1}{n} \right) 
 +  \frac{{28(s+k)^2}\frac{240(s+k)^2}{n}} {\mathbf{x}^T \mathbf{x}}\\
   & \overset{(\ref{eqxx})}{\leq}
    2(s+k)^2  \left(\frac{1}{n- {240(s+k)^2} }-\frac{1}{n}  \right)+  \frac{{6720(s+k)^4}} {n(n- {240(s+k)^2})} \\
     & = \frac{480(s+k)^4}{n(n-240(s+k)^2)} 
     +\frac{6720(s+k)^4}{n(n-240 (s+k)^2)}  \\
     & = \frac{7200 (s+k)^4}{n(n-240 (s+k)^2)}. 
 \end{align*}
 This completes the proof. 
 \end{proof}
 
 \begin{lemma}\label{cut balanced}
The sets $S$ and $T$ have sizes as equal as possible. That is
\[ \big||S| - |T|\big| \leq 1. \]
\end{lemma}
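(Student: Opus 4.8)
The plan is to convert the almost-sharp spectral estimate of Lemma~\ref{lemma312} into a quadratic inequality in the imbalance $|T|-|S|$ and then invoke integrality. Without loss of generality assume $|T|\ge |S|$ and set $D:=|T|-|S|\ge 0$; since $|S|+|T|=n$ we have $|S|=(n-D)/2$ and $|T|=(n+D)/2$, so that $|S||T|=(n^2-D^2)/4$, and in particular $D$ has the same parity as $n$. The target $\big||S|-|T|\big|\le 1$ is then exactly the statement $D\le 1$.

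First I would produce a clean lower bound for the left-hand side of Lemma~\ref{lemma312}. Using the concavity estimate $\sqrt{1-x}\le 1-x/2$ with $x=D^2/n^2$, we obtain
\[ \sqrt{|S||T|}=\frac{n}{2}\sqrt{1-\frac{D^2}{n^2}}\le \frac{n}{2}-\frac{D^2}{4n}, \]
and hence
\[ \frac{2}{n}\left\lfloor\frac{n^2}{4}\right\rfloor-\sqrt{|S||T|}\ge \frac{2}{n}\left\lfloor\frac{n^2}{4}\right\rfloor-\frac{n}{2}+\frac{D^2}{4n}. \]
The only remaining input is the value of $\frac{2}{n}\lfloor n^2/4\rfloor-\frac{n}{2}$, which equals $0$ when $n$ is even and $-\frac{1}{2n}$ when $n$ is odd.

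Combining this with the upper bound $\frac{7200(s+k)^4}{n(n-240(s+k)^2)}$ supplied by Lemma~\ref{lemma312} and clearing the common factor $\frac{1}{4n}$, I would reach $D^2\le \frac{28800(s+k)^4}{n-240(s+k)^2}$ in the even case and $D^2\le 2+\frac{28800(s+k)^4}{n-240(s+k)^2}$ in the odd case. For $n$ sufficiently large with $s,k$ fixed the trailing fraction is less than $1$, so the even case forces $D^2<1$ and hence $D=0$, while the odd case forces $D^2<3$ and hence $D\le 1$, since $D$ is a nonnegative integer. In either case $D\le 1$, which is the desired conclusion.

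I expect the only delicate point to be the parity bookkeeping around the floor function: it is precisely the extra $-\frac{1}{2n}$ contributed when $n$ is odd that relaxes the conclusion from $D=0$ to $D\le 1$, matching the ``as equal as possible'' phrasing rather than an exact bisection. Everything else is a routine asymptotic comparison once the concavity estimate has turned the square-root term into a lower bound that is quadratic in $D$.
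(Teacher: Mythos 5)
Your proof is correct and takes essentially the same route as the paper: both arguments rest entirely on Lemma~\ref{lemma312}, lower-bounding $\frac{2}{n}\lfloor n^2/4\rfloor-\sqrt{|S||T|}$ by an expression that grows with the imbalance and noting that the right-hand side $\frac{7200(s+k)^4}{n(n-240(s+k)^2)}$ is too small for large $n$. The paper phrases it as a contradiction from $|T|\ge|S|+2$ with two parity cases and conjugate rationalization, whereas you run it directly as a quadratic inequality in $D=|T|-|S|$ via $\sqrt{1-x}\le 1-x/2$ plus integrality/parity; this is only a cosmetic repackaging of the same argument.
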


 \begin{proof}
  We assume on the contrary that $|T|\ge |S|+2$.
 We consider two cases.

 {\bf Case 1:} $n$ is even. Since $|S|+|T|=n$, we  have
 \begin{eqnarray*}
  \frac{2}{n} \left \lfloor\frac{n^2}{4} \right \rfloor  
    -\sqrt{|S||T|} & \ge&  \frac{n}{2}-\sqrt{ \left (\frac{n}{2}-1 \right) \left(\frac{n}{2}+1 \right)}\\
&=&\frac{n}{2}-\sqrt{\frac{n^2}{4}-1}=\frac{1}{\frac{n}{2}+\sqrt{\frac{n^2}{4}-1}}>\frac{1}{n}.
  \end{eqnarray*}
 So by Lemma \ref{lemma312}, we have
 $$\frac{1}{n}< \frac{2}{n}\left \lfloor\frac{n^2}{4} \right \rfloor  
  -\sqrt{|S||T|}\le  \frac{7200 (s+k)^4}{n(n-240 (s+k)^2)} .
 $$
 This is a contradiction for sufficiently  large $n$.

 {\bf Case 2:} $n$ is odd. Since $|S|+|T|=n$, we  have
   \begin{eqnarray*}
    \frac{2}{n}\left \lfloor\frac{n^2}{4} \right \rfloor  
     -\sqrt{|S||T|}
    &\ge & \frac{n^2-1}{2n}-\sqrt{\left (\frac{n-3}{2} \right) \left(\frac{n+3}{2} \right)}\\
&=& \frac{1}{2}
\left( {n-\frac{1}{n}}-{\sqrt{n^2-9}}\right)=\frac{(n-\frac{1}{n})^2-(n^2-9)}{2(n-\frac{1}{n}+\sqrt{n^2-9})}\\
&=& \frac{7+\frac{1}{n^2}}{2(n-\frac{1}{n}+\sqrt{n^2-9})}> \frac{1}{n}.
  \end{eqnarray*}
 So by Lemma \ref{lemma312} again, we get 
 $$\frac{1}{n}< \frac{2}{n} \left \lfloor\frac{n^2}{4} \right \rfloor   
  -\sqrt{|S||T|}\le\frac{7200 (s+k)^4}{n(n-240 (s+k)^2)} .
 $$
This is a contradiction for sufficiently large $n$.
Therefore
for $n$ large enough we must have that $||S| - |T|| \leq 1$.
\end{proof}

Recall that $G$ is an $H_{s,t_1,\ldots ,t_k}$-free graph with the maximum spectral 
radius. 
Finally, we will show that $e(G) = \mathrm{ex}(n, H_{s,t_1,\ldots ,t_k})$. 
In other words, $G$ also attains the maximum number of 
edges among all $H_{s,t_1,\ldots ,t_k}$-free graphs. 

\medskip 

{\bf  Proof of Theorem \ref{thmmain}}.
 By way of contradiction, we may assume that $e(G) \leq \mathrm{ex}(n, H_{s,t_1,\ldots ,t_k}) - 1$. 
By Lemma \ref{cut balanced}, we know 
that $\bigl| |S|-|T| \bigr|\le 1$. 
 Let $H$ be an $H_{s,t_1,\ldots ,t_k}$-free graph  with $\mathrm{ex}(n, H_{s,t_1,\ldots ,t_k})$ edges on the same vertex set as $G$ such that 
 the crossing edges between $S$ and $T$ span a complete bipartite graph in $H$, this is possible because every graph in $\mathrm{Ex}(n, H_{s,t_1,\ldots ,t_k})$ has a maximum cut of size $\lfloor n^2/4\rfloor$ by  Theorem \ref{thmHY}.  
We denote $E_+=E(H)\setminus E(G)$ and $E_-=E(G)\setminus E(H)$. 
 Note that $E_+$ and $E_-$ are sets of edges such that $(E(G) \cup E_+) \setminus E_- = E(H)$. Thus 
 $e(G)+|E_+| - |E_-|=e(H)$, which together with 
 $e(H)\ge  e(G) +1$ implies that
    \[   |E_+| \geq |E_-| + 1.\] 
    Furthermore, we have that $|E_-| \leq e(G[S]) + e(G[T]) < 
    2(s+k)^2$. By  Lemma \ref{STlambdarefine}, 
    we have that $|E_+| \le \lfloor \frac{n^2}{4} 
    \rfloor- e(S,T) + 2f(s+k-1,s+k-1)\le 16(s+k)^2$. 
    Now, by  Lemma~\ref{second lower bound evector2}, we have that
\begin{align*}
\lambda (H) &\geq \frac{\mathbf{x}^T A(H) \mathbf{x}}{\mathbf{x}^T\mathbf{x}} = \lambda (G) +\frac{2}{\mathbf{x}^T\mathbf{x}} \sum_{ij\in E_+} \mathbf{x}_i\mathbf{x}_j - \frac{2}{\mathbf{x}^T\mathbf{x}} \sum_{ij\in E_-} \mathbf{x}_i\mathbf{x}_j \\
& \overset{\text{Lemma}~\ref{second lower bound evector2}}{\geq}  
\lambda (G) + \frac{2}{\mathbf{x}^T\mathbf{x}} 
\left( |E_+| \Bigl(1 - \frac{120(s+k)^2}{n}\Bigr)^2 - |E_-|\right)\\
& \geq \lambda (G) + \frac{2}{\mathbf{x}^T\mathbf{x}} \left( |E_+|  - |E_-|-\frac{240(s+k)^2}{n}  |E_+| +\frac{(120(s+k)^2)^2}{n^2}  |E_+| \right)\\
& \geq \lambda (G) + \frac{2}{\mathbf{x}^T\mathbf{x}} \left( 1-\frac{240(s+k)^2}{n}  |E_+| +\frac{(120(s+k)^2)^2}{n^2}  |E_+| \right)\\
&>\lambda (G) 
\end{align*}
for sufficiently  large $n$,
where the last inequality follows by  $|E_+| < 16(s+k)^2$.
Therefore we have that for $n$ large enough, 
$\lambda (H) > \lambda (G)$, a contradiction.
Hence $e(G)=e(H)$.
By Theorem \ref{thmHY}, we know that 
$G\in \mathrm{Ex}(n,H_{s,t_1,\ldots ,t_k})$. 
The proof of Theorem \ref{thmmain} 
is complete. $\hfill\square$

\section{Concluding remarks}

To avoid unnecessary calculations, 
we did not attempt to get the best bound 
on the order of graphs in the proof. 
Our proof used the Triangle Removal Lemma, 
which means that the condition ``sufficiently large $n$'' 
is needed in our proof. It is  interesting 
to determine how large $n$ needs to be for our result.

Recently, 
Cioab\u{a}, Desai and Tait \cite{CDT21}  
investigated the largest spectral radius of
 an $n$-vertex graph that does not contain the odd-wheel 
 graph $W_{2k+1}$, 
which is the graph obtained by joining a vertex to a cycle 
of length $2k$. Moreover, they raised 
the following more general conjecture. 

\begin{conjecture} \label{conj}
Let $F$ be any graph such that the graphs in $\mathrm{Ex}(n,F)$ 
are Tur\'{a}n graphs plus $O(1)$ edges. 
Then for sufficiently large $n$, 
a graph attaining the maximum spectral radius 
among all $F$-free graphs is a member of $\mathrm{Ex}(n,F)$. 
\end{conjecture}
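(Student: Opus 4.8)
The plan is to run the spectral stability scheme of Theorem~\ref{thmmain} at the general chromatic threshold $r+1:=\chi(F)$, replacing the bipartition $V=S\cup T$ by an $r$-partition $V=V_1\cup\cdots\cup V_r$ and the Tur\'an graph $T_2(n)$ by $T_r(n)$ throughout. Let $G$ be an $F$-free graph on $n$ vertices of maximum spectral radius. First I would record the coarse bound $\lambda(G)\ge\lambda(T_r(n))=(1-\tfrac1r+o(1))n$, valid because the conjectured extremal graph is $F$-free, and convert it into $e(G)\ge e(T_r(n))-o(n^2)$ by an inequality of the type in Lemma~\ref{lemlb}, now counting copies of $K_{r+1}$ rather than triangles; such a clique-counting spectral inequality is available from Nikiforov's work. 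Controlling the number of $K_{r+1}$'s already requires the \emph{local} input that each link $G[N(v)]$ is sparse, and this is the first place where the structure of $F$ must enter, since $F$-freeness should force every link to avoid a fixed subgraph of chromatic number $\chi(F)-1$.

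Second, I would apply the $K_{r+1}$-removal lemma (the $r=2$ case being Lemma~\ref{triangleremoval2}) together with the Erd\H{o}s--Simonovits stability theorem (the $r=2$ analogue is F\"uredi's Lemma~\ref{furedi20153}) to delete $o(n^2)$ edges and obtain an $r$-partition with $|V_i|=n/r+o(n)$, with a near-complete set of cross-edges, and with $\sum_i e(V_i)=o(n^2)$. The heart of the argument is then the bootstrapping of Lemmas~\ref{Wupper}--\ref{Lempty}: show that the set $W$ of vertices with $\Omega(n)$ neighbours inside their own part has size $o(n)$, that the set $L$ of low-degree vertices has size $O(1)$, and finally that $W\subseteq L$ and $L=\varnothing$, by using the positive Perron eigenvector $\mathbf{x}$ to rewire a bad vertex onto a large independent set in another part and strictly increase $\lambda$. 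Once the parts are clean, each $G[V_i]$ should carry only $O(1)$ edges, whence (as in Lemmas~\ref{STlambdarefine}--\ref{second lower bound evector2}) the minimum degree is $\tfrac nr-O(1)$, the partition is $r$-balanced (the analogue of Lemma~\ref{cut balanced}, now maximising $\lambda$ of the complete $r$-partite graph at the balanced point), and every entry satisfies $\mathbf{x}_u=1-o(1)$. The final contradiction is identical in spirit to the proof of Theorem~\ref{thmmain}: if $e(G)<\mathrm{ex}(n,F)$ one adds the $O(1)$ missing cross-edges and deletes the few within-part edges to pass to some $H\in\mathrm{Ex}(n,F)$, and the near-uniformity of $\mathbf{x}$ forces $\lambda(H)>\lambda(G)$.

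The main obstacle is precisely the step that is clean here but not in general. For $F=H_{s,t_1,\ldots,t_k}$ one knows exactly which configurations the link of a vertex, and the within-part graphs $G[V_i]$, must avoid---no long path, no $K_{1,s+k}$, no matching $M_{s+k}$---and this concrete forbidden list is what drives the Chv\'atal--Hanson bound (Lemma~\ref{Chvatal76}) giving $e(V_i)=O(1)$. For an arbitrary $F$ satisfying only the hypothesis that $\mathrm{Ex}(n,F)$ is $T_r(n)$ plus $O(1)$ edges, one has no such explicit local obstruction, and it is genuinely unclear how to prove that the within-part graphs of the \emph{spectral} extremizer carry merely $O(1)$ edges, as opposed to the $o(n^2)$ that stability alone yields. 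Converting the \emph{global} hypothesis ``extremal graphs are Tur\'an plus $O(1)$ edges'' into the \emph{local}, per-vertex control needed to run the rewiring arguments is the crux, and I expect it cannot be carried out uniformly for all such $F$ without additional structural assumptions---which is presumably why the statement is posed as a conjecture rather than a theorem.
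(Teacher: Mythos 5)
This statement is not a theorem of the paper: it is Conjecture~\ref{conj}, attributed to Cioab\u{a}, Desai and Tait \cite{CDT21}, and the paper offers no proof of it. The paper only records that the conjecture holds in special cases: for edge-color-critical $F$ (combining Simonovits's theorem \cite{Sim66} with Nikiforov's spectral Tur\'an theorem \cite{Niki09}), for the $k$-fan $F_k$ (Theorem~\ref{thmCFTZ20}), and for the flower graphs $H_{s,t_1,\ldots,t_k}$ (the paper's own Theorem~\ref{thmmain}). So there is no proof in the paper to compare yours against; a complete proof of the statement would be a new research result.

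Your proposal is, accordingly, not a proof, and you say so yourself. What you outline is the faithful $r$-partite transcription of the paper's stability scheme: a Nikiforov-type clique-counting analogue of Lemma~\ref{lemlb}, $K_{r+1}$-removal plus Erd\H{o}s--Simonovits stability in place of Lemmas~\ref{triangleremoval2} and \ref{furedi20153}, the $W$/$L$ bootstrapping, Perron-vector rewiring, balancedness, and the final edge-exchange. That is indeed how one would attack the conjecture, and the gap you isolate is the genuine crux: the hypothesis of the conjecture is purely \emph{global} (every graph in $\mathrm{Ex}(n,F)$ is $T_r(n)$ plus $O(1)$ edges), whereas every decisive step in the paper's argument is \emph{local} and $F$-specific --- the link of a vertex avoids a long path, the within-part graphs avoid $K_{1,s+k}$ and $M_{s+k}$ so that Lemma~\ref{Chvatal76} caps $e(S)+e(T)$ by a constant, and the rewiring in Lemma~\ref{Lempty} is safe only because a vertex joined to an independent subset of the opposite part cannot lie on a short odd cycle, hence cannot be the hub of a flower. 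None of these local facts is a formal consequence of the global hypothesis; for an arbitrary $F$ with $\chi(F)=r+1$ one would have to manufacture, from ``Tur\'an plus $O(1)$ edges,'' both a constant bound on $\sum_i e(G[V_i])$ in the \emph{spectral} extremizer and a provably $F$-safe target set for the rewiring, and no uniform way to do this is known. Your closing judgment --- that this global-to-local conversion is exactly why the statement is posed as a conjecture --- is an accurate description of the state of the problem, not a defect of your write-up; but as a proof attempt the proposal stops precisely where the real difficulty begins.
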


We say that  $F$ is  edge-color-critical if  
there exists an edge $e$ of $F$ such that 
$\chi (F-e) < \chi (F)$. 
Let $F$ be an edge-color-critical graph with $\chi (F)=r+1$. 
By a result of Simonovits \cite{Sim66}
and a result of Nikiforov \cite{Niki09}, 
we know that $\mathrm{Ex}(n,F)=\mathrm{Ex}_{sp}(n,F) =
\{T_r(n)\}$ for sufficiently large $n$. This  
 shows that Conjecture \ref{conj}  is true for 
all edge-color-critical graphs.  
As we mentioned before, Theorem \ref{thmCFTZ20} 
says that Conjecture \ref{conj} 
holds for the $k$-fan graph $F_k$. 
In addition, our main result (Theorem \ref{thmmain}) 
tells us that    Conjecture \ref{conj} also holds 
for the flower graph $H_{s,t_1,\ldots ,t_k}$. 
Note that both $F_k$ and $H_{s,t_1,\ldots ,t_k}$ are not edge-color-critical. 

Let  $S_{n,k}$ be the graph consisting of a clique on $k$ vertices and an independent set on $n-k$ vertices in which each vertex of the clique is adjacent to each vertex of the independent set. 
Clearly, we can see that 
$S_{n,k}$  does not contain $F_k$ as a subgraph. 
Recently, Zhao, Huang and Guo \cite{ZHG21} 
proved that $S_{n,k}$ is 
the unique graph attaining the maximum signless Laplacian spectral radius among all graphs of order $n$ containing no $F_k$ 
for $n\ge 3k^2-k-2$. 
So it is a natural question 
to consider the maximum signless Laplacian spectral radius 
among all graphs containing no $C_{k,q}$, 
the graph defined as $k$  cycles of odd-length $q$ intersecting in a common vertex. 
We write $q(G)$ for the 
  signless Laplacian spectral radius, i.e., 
 the largest eigenvalue of 
 the {\it signless Laplacian matrix} $Q(G)=D(G) + 
 A(G)$, where $D(G)=\mathrm{diag}(d_1,\ldots ,d_n )$ 
 is the degree diagonal matrix and 
 $A(G)$ is the adjacency matrix. 
We end with the following conjecture 
(Clearly, when $t=1$, 
our conjecture reduces to the result of Zhao et al. \cite{ZHG21}). 

\begin{conjecture}
For integers $k\ge 2, t\ge 1$ and $q=2t+1$, 
there exists an integer $n_0(k,t)$ such that 
if $n\ge n_0(k,t)$ and 
$G$ is a $C_{k,q}$-free graph on $n$ 
vertices, then 
\[  q(G) \le q(S_{n,kt}), \]
equality holds if and only if $G=S_{n,kt}$. 
\end{conjecture}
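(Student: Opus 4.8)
The plan is to transport the stability-and-eigenvector strategy of Theorem~\ref{thmmain} to the signless Laplacian $Q(G)=D(G)+A(G)$, with the complete split graph $S_{n,kt}$ taking over the role of the extremal graph. First I would verify that the conjectured extremal graph is admissible and compute its spectral parameter. Writing $S_{n,kt}=K_{kt}\vee \overline{K_{n-kt}}$, in any odd $q$-cycle the independent-set vertices form an independent set of that cycle, so at most $t=\lfloor q/2\rfloor$ of them can lie on it; hence each $q$-cycle uses at least $t+1$ clique vertices, and since the $k$ cycles of $C_{k,q}$ are pairwise disjoint apart from their common vertex, any embedding would demand at least $kt+1$ clique vertices while only $kt$ are available. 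Thus $S_{n,kt}$ is $C_{k,q}$-free. A symmetric Perron-vector computation (constant on the clique, constant on the independent set) shows that $q(S_{n,kt})$ is the larger root of $x^2-(n+2kt-2)x+2kt(kt-1)=0$, so that
\[ q(S_{n,kt})=n+2kt-2-\frac{2kt(kt-1)}{n}+O(n^{-2}). \]
Since $q(\cdot)$ is monotone under edge addition, the extremal $G$ may be taken connected, and by extremality $q(G)\ge q(S_{n,kt})$. I fix the positive Perron vector $\mathbf{x}$ of $Q(G)$, normalized by $\max_u\mathbf{x}_u=\mathbf{x}_z=1$, and use the eigen-equation $(q(G)-d(u))\mathbf{x}_u=\sum_{v\sim u}\mathbf{x}_v$ throughout.

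Next I would install the local constraint that replaces the ``no long path in a neighborhood'' observation of the main proof. If $G[N(v)]$ contained $k$ vertex-disjoint copies of $P_{2t}$, then joining $v$ to each of them would create $k$ odd $q$-cycles meeting only at $v$, i.e.\ a copy of $C_{k,q}$; hence $G[N(v)]$ is $kP_{2t}$-free for every $v$. By the disjoint-path extension of the Erd\H{o}s--Gallai bound (Lemma~\ref{lempath}), such a graph satisfies $e(G[N(v)])\le (kt-1)d(v)+O(1)$, and its densest realization is again a complete split graph; this is the exact analogue of the $K_{1,s+k}$-free and $M_{s+k}$-free conditions used in Lemma~\ref{lem336}, and the threshold $kt$ appearing here is the very $kt$ of the clique of $S_{n,kt}$. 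Using this together with the lower bound $q(G)\ge n+2kt-2-o(1)$, I would run the stability phase: the surplus of $q(G)$ over $n$ cannot be supplied by a near-bipartite graph, for which $q\approx n$, so it forces a set $R$ of about $kt$ vertices of degree $n-o(n)$ whose removal leaves $o(n^2)$ edges; in other words $G$ is $o(n^2)$-close to a dominating clique on $R$ together with an independent set $V\setminus R$.

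The third stage sharpens the approximate split into an exact one. As in Lemma~\ref{second lower bound evector2} I would first show that every entry of $\mathbf{x}$ equals $1-o(1)$, and then, by the edge-rerouting argument of Lemma~\ref{Lempty}, upgrade each vertex of $R$ to a dominating vertex of degree $n-1$: moving the edges incident to a low-entry vertex onto the independent set strictly increases $q(G)$ while, the new neighbours being independent, creating no new short odd cycle and hence no $C_{k,q}$. With $R$ a dominating clique, a size of $kt+1$ is impossible because it would contain $S_{n,kt+1}\supseteq C_{k,q}$, and a size below $kt$ is impossible because, combined with the monotonicity of $q(S_{n,m})$ in $m$, it would push $q(G)$ below $q(S_{n,kt})$; hence $|R|=kt$. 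Finally $V\setminus R$ must be independent, since an edge inside it, together with the clique $R$, builds $C_{k,q}$ using exactly $kt$ clique vertices --- one cycle runs through that edge and so needs only $t$ of them, leaving room for the remaining $k-1$ cycles, which is exactly where the hypothesis $k\ge2$ is used. Therefore $G=S_{n,kt}$, and uniqueness follows because every switch above is strictly increasing.

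The step I expect to be the main obstacle is the passage from the approximate to the exact split structure, together with the simultaneous control of degrees and eigenvector entries that the signless Laplacian demands. Unlike the adjacency case, the term $d(u)$ sits inside the eigen-equation, so the degree estimates and the entry estimates must be bootstrapped together rather than decoupled through a clean maximum cut; moreover the target is a highly unbalanced clique-plus-independent-set, not a balanced bipartition, so the stability input has to be re-derived rather than imported from Lemma~\ref{maxcut}. The most delicate verification is that every proposed edge switch preserves $C_{k,q}$-freeness, and this is precisely where the sharp counting ``$kt$ clique vertices is one short'' and the hypothesis $k\ge2$ enter; certifying that no profitable switch survives unless $G$ is exactly $S_{n,kt}$ is the crux, just as the final edge-modification argument was the crux of Theorem~\ref{thmmain}.
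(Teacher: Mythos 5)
The paper itself contains no proof of this statement: it is posed as a conjecture in the concluding remarks, and the remark immediately following it records that it was resolved only after submission, by Chen, Liu and Zhang \cite{CLZ2021}, in a separate paper. So there is no in-paper proof to compare yours against, and your proposal must stand on its own; as written, it does not, because its two load-bearing steps are asserted rather than proved. Your preparatory work is sound: $S_{n,kt}$ is indeed $C_{k,q}$-free by the clique-vertex count, the quadratic $x^2-(n+2kt-2)x+2kt(kt-1)=0$ for $q(S_{n,kt})$ is correct, and ``$G[N(v)]$ contains no $k$ disjoint copies of $P_{2t}$'' is the right local analogue of the paper's key observation. The first genuine gap is the stability phase: you assert that $q(G)\ge n+2kt-2-o(1)$ forces a set $R$ of roughly $kt$ vertices of degree $n-o(n)$ whose removal leaves $o(n^2)$ edges, but nothing in your sketch, and nothing in the paper's toolkit, yields this. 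Lemma~\ref{lemlb}, the triangle removal lemma and F\"uredi's stability lemma all detect closeness to a \emph{dense} extremal graph by comparing $e(G)$ with $n^2/4$; here the target $S_{n,kt}$ has only about $ktn$ edges, so an extremal $G$ may have anywhere from $\Theta(n)$ to $n^2/4+O(1)$ edges and density-based stability identifies nothing, while general signless Laplacian bounds such as $q(G)\le\max_{uv\in E(G)}\bigl(d(u)+d(v)\bigr)$ produce only two vertices of large degree sum, not $kt$ of them plus a near-independent complement. You concede this step ``has to be re-derived'' but never re-derive it; it is precisely the substance of the Chen--Liu--Zhang proof.

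The second gap is that your rerouting step rests on a false claim. Joining a vertex $v$ only to an independent set excludes triangles through $v$, but \emph{not} odd cycles of length at least $5$: such a cycle needs no edge inside $N(v)$, only two new neighbours $w_1,w_2$ joined by an even path in $G-v$, and in the very structure you are building such paths abound --- $v,w_1,r_1,r_2,w_2,v$ with $r_1,r_2\in R$ and $w_1,w_2$ in the independent part is a $5$-cycle, and longer odd cycles arise the same way. Since $C_{k,q}$ has all cycles of length $q\ge 5$, independence of the new neighbourhood by itself certifies nothing. Freeness after rerouting can only come from the global count ``any copy of $C_{k,q}$ needs at least $kt+1$ vertices of $R$,'' but that count is available only once you already know $|R|\le kt$ and that $V\setminus R$ is independent --- which are exactly the conclusions the rerouting is meant to establish. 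The argument is therefore circular as it stands; breaking the circularity, i.e.\ pinning down $|R|$, the independence of $V\setminus R$, the degrees and the eigenvector entries simultaneously for the operator $Q=D+A$, is the real work, and it cannot be obtained by transcribing Lemma~\ref{Lempty}.
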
 

{\bf Remark.} 
After we submitted our paper, 
this conjecture was recently solved by Chen, Liu and Zhang \cite{CLZ2021}. 

Another interesting problem 
on this topic is to 
determine the Tur\'{a}n number  
of $C_{k,q}$ for even $q$. 
In general, it is  challenging 
to determine the  Tur\'{a}n number 
of $H_{s,t_1,\ldots ,t_k}$ where the cycles have even lengths.

\subsection*{Acknowledgements}
The authors would like to thank anonymous reviewers for their valuable comments and suggestions to improve the presentation of the paper. 
The first author would like to express his sincere thanks to 
Prof. Lihua Feng 
and Lu Lu for many illuminating discussions. 
This work was supported by  NSFC (Grant No. 11931002).

\frenchspacing

\end{document}